\documentclass{siamart250211}
\usepackage{amsmath, amssymb}
\usepackage{subfigure}
\usepackage{enumitem}
\usepackage{geometry,xcolor,comment}
\newcommand{\eps}{{\displaystyle \varepsilon}}
\newcommand{\bsub}{\begin{subequations}}
\newcommand{\esub}{\end{subequations}$\!$}
\newcommand{\ds}[0]{\displaystyle}
\newcommand{\bs}[0]{\boldsymbol}
\usepackage{hyperref,url}
\usepackage[normalem]{ulem}
\newcommand{\al}[1]{\textcolor{black}{#1}}
\newcommand{\jgh}[1]{\textcolor{black}{#1}}

\newcommand{\ajb}[1]{\textcolor{black} {{#1}}}

\newcommand{\littleoh}{ \mbox{{\scriptsize $\mathcal{O}$}}}
\newcommand{\bigoh}{\mathcal{O}}

\newcommand{\braket}[1]   {\left<{#1}\right>}
\newcommand{\bx}{\mathbf{x}}
\newcommand{\Hc}{\mathcal{H}}
\newcommand{\Dc}{\mathcal{Q}}

\newcommand{\by}{\mathbf{y}}

\usepackage{subcaption,graphicx}

\theoremstyle{remark}
\newtheorem*{remark}{Remark}

\newcommand{\bn}{{\bf n}}
\newcommand{\br}{{\bf r}}

\newcommand{\bd}{{\bf d}}

\newcommand{\vx}{\mathbf{x}}

\newcommand{\Om}{\Omega}
\newcommand{\pOm} {\partial \Omega}

\newcommand{\cH}{{\mathcal{H}}}

\newcommand{\cS}{{\mathcal{S}}}

\newcommand\RR{{\mathbb R}}

\newcommand{\hn} {{\hat{\textbf{n}}}}
\newcommand{\htan} {{\hat{\textbf{t}}}}
\newcommand{\Q}{Q}
\newcommand{\cda}{c_{30}}
\newcommand{\ccb}{c_{21}}
\newcommand{\cbc}{c_{12}}
\newcommand{\cad}{c_{03}}
\newcommand{\rhoh}{{\hat{\rho}}} 
\newcommand{\etah}{{\hat{\eta}}} 
\newcommand{\rh}{{\hat{r}}} 

\title{The three-dimensional Neumann Green's function for general surfaces: singular asymptotics and boundary integral methods}

\author{Alan E. Lindsay \thanks{University of Notre Dame, Notre Dame, IN, 46556, USA.
  Email:
   {\tt a.lindsay@nd.edu } (corresponding author)}, \and Andrew J. Bernoff \thanks{Harvey Mudd College, Claremont, CA, USA Email: {\tt bernoff@g.hmc.edu}}\and Tristan Goodwill \thanks{Department of Statistics, University of Chicago, USA. Email: {\tt tgoodwill@uchicago.edu} } \and Jeremy G. Hoskins \thanks{Department of Statistics, University of Chicago, USA and NSF-Simons National Institute for Theory and Mathematics in
Biology, Chicago, IL. Email: {\tt jeremyhoskins@uchicago.edu}}}

\begin{document}

\label{firstpage}
\maketitle

\baselineskip=12pt

\begin{abstract}
    We present an asymptotic analysis and high-order boundary integral method for the three-dimensional Neumann Green's function in general \al{closed and smooth} geometries. The Neumann Green's function is a fundamental quantity which arises in numerous fields of science and engineering. In the application of singular perturbation methods to strongly localized reactions and diffusive transport, the Green's function plays the key role in mediating global dynamics. However, this essential quantity can only be determined in closed-form for a limited set of geometries. The Green's function for the Laplacian is an elliptic problem with a Dirac forcing term. Accurate resolution of the solution requires a careful decomposition into a singular and a regular part. The bulk scenario is where the source is placed off surface and the singularity is given by the free-space function. In the surface case, where the source is placed at a curved point on the boundary, we use asymptotic analysis to determine a three-term singularity structure. With explicit knowledge of these singularities, we develop a high-order boundary integral method for the determination of the remaining regular part. To resolve the singular boundary data, our integral method uses a custom discretization with Duffy patches near the source. We validate our method using several test cases in which closed-form solutions can be developed, including spheres, prolate spheroids and constructed domains. We demonstrate the applicability of our method to address some open problems in narrow capture theory.
\end{abstract}

\begin{keywords}
    Green's functions; Singular perturbation methods; Integral Methods; Narrow capture problems.
\end{keywords}

\section{Introduction}

The Neumann Green's function for the Laplacian (or Neumann function) is a fundamental quantity with applications across science and engineering, with examples \jgh{including} electrostatics, acoustics, optics and diffusive processes \cite{Prolate1,Dassios2012,prolate2,Silbergleit2003,SurfaceGreen3D,SingerHolcman2006,cheviakov2010asymptotic,CHAILLAT2022,GrebenkovWard2026,Garabedian1964,BergmanSchiffer1953,carillo2026}. For a domain $\Omega\subset\mathbb{R}^3$ with \al{a smooth closed} boundary $\partial\Omega$, and a source point $\bx_0$, the problem for the Neumann function can be posed either interior or exterior to $\Omega$, with a bulk source ($\bx_0\notin\partial\Omega$) or a surface source ($\bx_0\in\partial\Omega$). These choices generate four related but distinct functions that have different normalization and singularity considerations. \al{The methodologies developed in this work provide a unified framework for the evaluation of all four functions, however,} our presentation will largely focus on the surface case which is the most significant challenge. In the case of a surface source $\bx_0\in\partial\Omega$, the exterior Neumann function $G^e(\bx;\bx_0)$ for the Laplacian satisfies the boundary value problem
\bsub\label{eq:eqnG_exterior}
\begin{equation}
  \label{eq:eqnG_exterior_a}  \Delta G^e = 0 \quad \text{in}\quad \mathbb{R}^3 \setminus \Omega, \qquad \partial_n G^e = -\delta(\vx - \vx_0) \quad \text{on} \quad \partial \Omega. 
\end{equation}
The formulation \eqref{eq:eqnG_exterior} assumes that the Dirac distribution is two-dimensional in the surface coordinates. The choice of the negative sign implies a unit flux into the surface and imposes the far field solvability condition 
\begin{equation}\label{eq:eqnG_exterior_c}
    G^e(\vx;\vx_0) \sim \frac{1} {4 \pi |\vx|}
\qquad \mathrm{as} \qquad {|\vx| \to \infty}.
\end{equation}
\esub
The \emph{interior} surface Neumann function satisfies
\bsub\label{eq:eqnG_interior}
\begin{equation}
 \label{eq:eqnG_interior_a}   \Delta G^i = \frac{1}{|\Omega|} \quad \text{in} \quad \Omega; \qquad 
   \partial_n G^i = -\delta(\vx - \vx_0) \quad \text{on} \quad \partial \Omega.
\end{equation}
The solution is unique up to a constant translation which can be fixed by the condition
\begin{equation}\label{eq:eqnG_interior_c}
\int_{\Omega}G^i(\bx;\bx_0)\textrm{d}V(\bx) = 0.
\end{equation}
\esub
In the above formulations $\partial_n \equiv \hn \cdot \nabla$ is the normal derivative to $\partial\Omega,$ with $\hn$ the outward pointing normal. The presence of the Dirac forcing term means that the Neumann function is singular as $\bx\to\bx_0$. A key step in resolving the Neumann function is to determine the correct singularity structure and establish the decomposition
\begin{equation}\label{eqn:Sing_G}
    G^{i,e}(\vx; \vx_0) = G^{i,e}_{\text{sing}}(\vx;\vx_0) + R^{i,e}(\vx; \vx_0).
\end{equation}
In the formulation \eqref{eqn:Sing_G} $G^{i,e}_{\text{sing}}(\bx;\bx_0)$ is singular as $\vx \to \vx_0$ and $R^{i,e}(\vx; \vx_0)$ is bounded as $\bx\to\bx_0$. \al{As mentioned previously, the \emph{bulk} case is relatively straightforward. For a given $\bx_0\notin\partial\Omega$}, we have that
\begin{equation}\label{eq:FreeSpace}
    G_{\text{sing}}^{i,e} (\bx;\bx_0) = G_F(\bx-\bx_0), \qquad G_F(\bx) = \frac{1}{4\pi|\bx|}.
\end{equation}
which is the standard free space Green's function for the Laplacian in three dimensions. The surface case ($\bx_0\in\partial\Omega$) dramatically alters the nature of the singular solution behavior. The precise behavior of the singular component $G^{i,e}_{\text{sing}}(\bx;\bx_0)$ for $\bx_0\in\partial\Omega$ has been the topic of several studies due \al{to} its centrality in the asymptotic solution of the narrow escape problem describing the escape rate of Brownian particles \al{through small boundary windows} \cite{cheviakov2010asymptotic,SingerHolcman2006,lindsay2017first,TargetSearch2024,GrebenkovWard2026,Bressloff2024,carillo2026} and in applications to superconductivity \cite{SurfaceGreen3D,Silbergleit2003}. The recent study of \cite{NURSULTANOV2021202} used a pseudodifferential operator approach to identify a triple singularity structure featuring a monopole term twice the strength of the free-space term \eqref{eq:FreeSpace}, a logarithmic singularity with strength proportional to the mean curvature at $\bx_0$ and a weaker path-dependent singularity whose strength is proportional to the difference of principal curvatures at $\bx_0$. \al{ In Sec.~\ref{sec:apps}, we highlight the particular importance of evaluating $R^{i,e}(\bx_0;\bx_0)$ in asymptotic analysis of narrow capture problems \cite{GrebenkovWard2026,lindsay2017first,NURSULTANOV2021202}.}

\paragraph{Singularity structure of the Green's function} We provide here a derivation of the singular behavior $G^{i,e}_{\text{sing}}(\bx;\bx_0)$ as $\bx\to\bx_0$ for $\bx_0\in\partial\Omega$ using a local expansion of the Laplacian in tangent plane coordinates near $\bx_0$. Our analysis re-derives the result obtained in \cite{NURSULTANOV2021202}, but with a formulation that is adaptable to boundary integral equation methods for accurate evaluation of $R^{i,e}(\bx;\bx_0)$. The result takes the form
\bsub\label{GSing_surf}
\begin{equation}
    G^{i,e}_{\text{sing}}(\bx;\bx_0) \sim \frac{1}{2\pi|\bx-\bx_0|} \mp \frac{\Hc(\bx_0)}{4\pi} \log \big[\, |\bx-\bx_0| + \al{q}\, \big] \pm \mathcal{Q}(\bx_0)\, e(\bx;\bx_0), \quad \mbox{as} \quad \bx\to\bx_0,
\end{equation}
where $\al{q} = (\bx-\bx_0)\cdot \hn$ is the signed distance from $\bx\in\Omega$ to the tangent plane at $\bx_0\in\partial\Omega$. The upper and lower signs reflect the exterior and interior singularities respectively. The local geometric quantities are defined as
\begin{equation}\label{eqn:curveH}
\Hc(\bx_0) =  \frac{\kappa_1(\bx_0) + \kappa_2(\bx_0)}{2}, \qquad \Dc (\bx_0) =  \frac{\kappa_1(\bx_0) - \kappa_2(\bx_0)}{2},
\end{equation}
\esub
where $\kappa_j(\bx_0)$ for $j=1,2$ denote the principal curvatures of the surface at $\bx_0$. The quantity $\Hc(\bx_0)$ is the mean curvature at $\bx_0\in\partial\Omega$ while $\Dc(\bx_0)$ is a measure of the anisotropy in the surface curvature. The function $e(\bx;\bx_0)$ is a weaker singularity that remains finite in $\Omega$ near $\bx_0$ but has a different limiting value depending on the path that $\bx$ approaches $\bx_0$, analogous to the behavior of $(x^2-y^2)/(x^2+y^2)$ as $(x,y)\to(0,0)$. In \eqref{eqn:Gsing}, we derive the explicit limiting behavior
\[
e(\bx;\bx_0) \sim \frac {1} {8\pi} \left[ \frac{
|\bx-\bx_0| - \al{q}}{|\bx-\bx_0| + \al{q}} \right ] \cos (2 \varphi), \quad \mbox{as} \quad\bx\to\bx_0.
\]
Here $\varphi$ is an azimuthal measure in the tangent plane to $\partial\Omega$ at $\bx_0\in\partial\Omega$ (see Fig.~\ref{fig:LocalSchematic}).

The main challenge addressed in this work is to reliably compute the solution of the regular part $R^{i,e}(\bx;\bx_0)$ while precisely implementing the surface singularity behavior \eqref{GSing_surf} and the normalization conditions \eqref{eq:eqnG_exterior_c} and \eqref{eq:eqnG_interior_c}. The Green's functions solving \eqref{eq:eqnG_exterior} and \eqref{eq:eqnG_interior} have so far only been accessible for simple geometries such as the sphere \cite{SurfaceGreen3D,cheviakov2010asymptotic,ChevWard2010}, prolate and oblate spheroids \cite{Prolate1} and ellipsoids \cite{Dassios2012}. These solutions often take the form of slowly convergent expansions in associated Legendre functions or Lam\'{e} functions whose reliable numerical evaluation has many associated issues \cite{prolate2}. However, for many important applications, it is essential to determine this function for general curved surfaces \cite{Silbergleit2003,SingerHolcman2006,GomezCheviakov2015,Camley2025}. In chemotaxis and chemical signaling, an open problem of broad interest is to understand the role that curvature plays in biological signaling processes involving diffusion \cite{Endres2025,Curvature2023,PlunkettLawley2024,LLM2020}. Matched asymptotic analysis is a powerful tool for approximating the solution of cell signaling problems in non-radially symmetric geometries, however, it expresses these solutions in terms of global quantities related to the Neumann function \cite{BL2025,lindsay2017first,ChevWard2010,GrebenkovWard2026,TXKTW2017}. 

A recent review article \cite{TargetSearch2024} raised the lack of reliable computation of the Neumann function as an obstacle for the application of matched asymptotic methods for capture problems. The contribution of this work is to fill this gap by providing a high-order boundary integral method that can evaluate solutions of (\ref{eq:eqnG_exterior}-\ref{eq:eqnG_interior}) for general three-dimensional geometries. Our method exactly captures the singularity behavior \eqref{GSing_surf} for both the bulk $(\bx_0\notin\partial\Omega)$ and surface ($\bx_0\in\partial\Omega$) cases while resolving for the regular part $R^{i,e}(\bx;\bx_0)$. For the interior Green's function, it exactly implements the global condition \eqref{eq:eqnG_interior_c}.

The paper is structured as follows. In Section \ref{sec:local_derivation}, we perform an asymptotic analysis in the vicinity of a source $\bx_0\in\partial\Omega$ located on a curved surface, leading to the singularity behavior \eqref{GSing_surf}. In Section \ref{Sec:Numerics} we describe our boundary integral method for the resolution of the regular part $R^{i,e}(\bx;\bx_0)$ of the Green's function while exactly implementing the singularity behavior and integral constraints. In Section \ref{sec:Results} we validate the accuracy of our methods on a range of known solutions for some simple geometries. We also demonstrate the applicability of the method to \jgh{address} the question of optimal arrangements of target sites in narrow capture problems. Finally in Section \ref{sec:Discussion} we conclude with some avenues for future research and potential applications of this method.

\section{Asymptotic analysis of the singular solution structure}\label{sec:local_derivation}

In this section, we derive the singular structure of the surface Green's function in the vicinity of the source point $\bx_0\in\partial\Omega$. In our derivation, we use a coordinate system near the source that will later facilitate numerical solution. For clarity we will proceed by deriving the relevant behavior for the exterior case.

\subsection{The tangent plane coordinate system}

In this section we use an orthogonal coordinate system that is a rotated and translated version of $\RR^3$. By orienting the coordinates along the principal axes at a point, we can reduce the number of terms in the local expansion of the surface. Consider a coordinate system centered at the source $\bx_0$:
\begin{equation}\label{eq:cordinates}
    \vx - \vx_0 = \al{q}\, \hn +  p_1 \htan_1 + p_2 \htan_2,
\end{equation}
where $\hn$ is the outward pointing unit normal to $\pOm$ at $\vx_0$, and $\htan_1, \htan_2$ are perpendicular tangent vectors aligned to the principal curvatures,  $\kappa_1$ and $\kappa_2$, which are taken as negative when $\Om$ is convex (which is consistent with an outward pointing normal). The mean curvature, $\Hc$, and the curvature difference, $\Dc$, at $\vx_0$ are defined by \eqref{eqn:curveH}. The surface can be locally defined by a function
$$\al{q}=f(p_1,p_2) = \frac{\kappa_1 p_1^2 +\kappa_2 p_2^2}{2} 
    +\al{\bigoh(p_1^3,p_1^2 p_2,p_1 p_2^2,p_2^3)},$$
or implicitly as 
$$\cS(p_1,p_2,\al{q}) := \al{q} - f(p_1,p_2) = 0. $$
It is convenient to also define a local cylindrical version of these coordinates by 
\bsub\label{eq:local_cylindrical}
\begin{equation}\label{eq:local_cylindrical_a}
    (p_1,p_2, \al{q} ) = (r \cos\varphi, r \sin \varphi, \al{q}),
\end{equation}  
where we have that
\begin{equation}\label{eq:local_cylindrical_b}
r =\sqrt{p_1^2+p_2^2}, \qquad \al{q} = \hat{\bf{n}}\cdot(\bx-\bx_0), \qquad \rho  \equiv |\bx-\bx_0| = \sqrt{\al{q}^2 +r^2} \ .
\end{equation}
\esub
A schematic of the local coordinate system on the tangent plane is shown in Fig.~\ref{fig:LocalSchematic}.
\begin{figure}[t]
    \centering
    \includegraphics[width=0.65\linewidth]{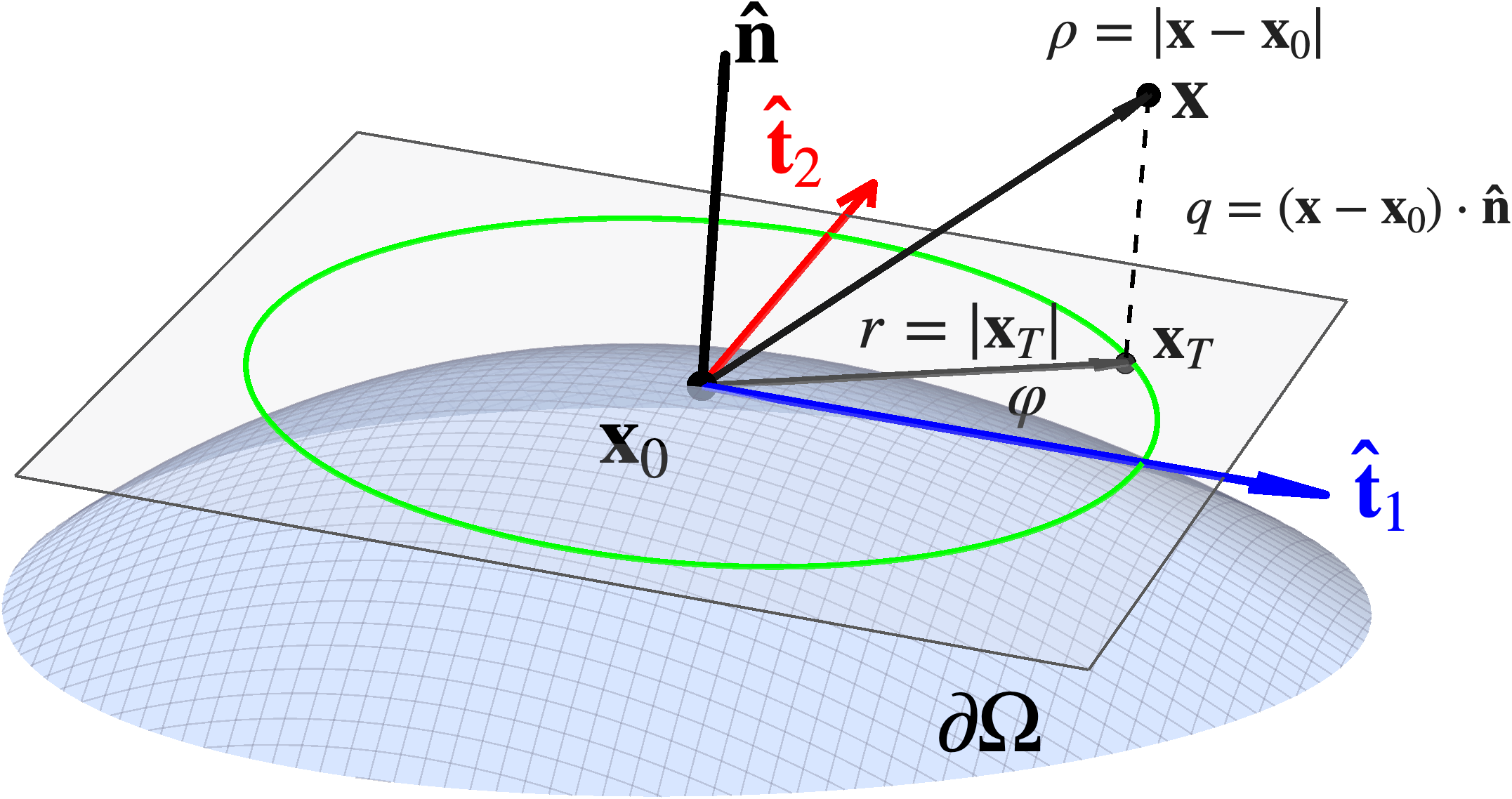}
    \caption{Local coordinate system near the point $\bx_0\in\partial\Omega$ and exterior to $\partial\Omega$. The unit vectors $\hat{\textbf{t}}_1$ ,$\hat{\textbf{t}}_2$ \eqref{eq:cordinates} lie along the principal directions. The local cylindrical coordinate system $(r,\varphi,\al{q})$ is defined in \eqref{eq:local_cylindrical}.}
    \label{fig:LocalSchematic}
\end{figure}

\subsection{The leading order Green's function}

Our aim is to write the Green's function in the form
\begin{equation}
    G(\vx ; \vx_0) = G_{\text{sing}}(\vx;\vx_0) +R(\vx;\vx_0),
\end{equation}
\ajb{where $G_{\text{sing}}(\vx ; \vx_0)$ is the singular portion of the Green's function and $R(\vx;\vx_0)$ is the regular part which is analytic at $\vx=\vx_0$. When the source is placed in the bulk, the singularity corresponds to the free space function $G_{\text{sing}}(\bx;\bx_0)= (4\pi|\bx-\bx_0|)^{-1}$; this choice completely specifies the regular part which 
satisfies an inhomogeneous Neumann problem on the domain boundary $\partial \Omega$, supplemented by appropriate decay conditions for the exterior problem. Significantly more work is required when the source is on the domain boundary; below we will expand $G_{\text{sing}}$ in an asymptotic expansion which captures the singular behavior near the source. }




In order to calculate an expansion for non-flat geometries, we first note that the unit normal on $\cS$ can be written as
$$\hn(\al{p_1,p_2}) = \frac{\nabla \cS}{|\nabla \cS|} = 
\frac{( -f_{p_1}, -f_{p_2}, 1 )}
{\sqrt{1+ (f_{p_1})^2 +(f_{p_2})^2}} \ ,
$$
which gives the following formula for $\partial_n G$ on $\pOm,$
\begin{equation}
\nonumber \left . \partial_n G \right|_{\pOm} 
=\left . \hn(p_1,p_2) \cdot \nabla G   \right |_{\pOm}  \\
=\left .
\frac{\partial_{\al{q}} G-f_{p_1}\partial_{p_1} G -f_{p_2}\partial_{p_2} G
} {\sqrt{1+ (f_{p_1})^2 +(f_{p_2})^2}}\right |_{\cS=0} \ .
\end{equation}
The transformed Dirac source takes the form
$$\delta(\vx - \vx_0)=  \sqrt{1+ (f_{p_1})^2 +(f_{p_2})^2 }\,\delta(p_1)\delta(p_2),$$
where the multiplicative factor is the area element, $\mathrm{d}A =\sqrt{1+ (f_{p_1})^2 +(f_{p_2})^2 } \, d p_1 \, d p_2 $.
However, if we specify $f_{p_1}(0,0)=f_{p_2}(0,0)=0$ then the area element at the source evaluates to unity. Consequently, we can reduce the boundary condition to 
\begin{equation}
\partial_{\al{q}} G-f_{p_1}\partial_{p_1} G -f_{p_2} \partial_{p_2} G = - \delta(p_1)\delta(p_2) 
\qquad (p_1,p_2) \in \RR^2
,
\end{equation}
which is valid in the region where the surface can be represented as a graph $\al{q}=f(p_1,p_2)$.

\subsection{The local perturbation expansion}
\ajb{Guided by the leading order singularity of the free space Green's function \eqref{eq:FreeSpace}, we introduce the variables
\begin{equation}\label{eqn:rescaling_coords}
(p_1,p_2,\al{q}) = (\eps \mu, \eps \nu, \eps \zeta),
\end{equation}
with associated cylindrical coordinates  $(r,\varphi,\zeta)$ 
$$\mu = r \cos \varphi, \quad \nu= r \sin \varphi,
\quad r=\sqrt{\mu^2+\nu^2} \ , \quad \rho=\sqrt{r^2+\zeta^2}  \ .$$
Here $\eps\ll 1$ is a small parameter that sets the scale of the local region around the source. In practice, we choose a fixed small distance, $d^*$, around the source and derive an asymptotic expansion for the solution in the region where $|\bx-\bx_0| < d^*$ as $\eps \to 0$. The Neumann boundary conditions are applied on the tangent plane (derived via a local expansion) and Laplace's equation is solved in the upper half-plane in a hemisphere where $\rho  \le d^*/\eps \to \infty$. }

\ajb{We now expand the surface boundary condition;} on the surface,
$$\zeta= \frac 1 {\eps}f(\eps \mu,\eps \nu) = \eps \frac{\kappa_1 \mu^2 +\kappa_2 \nu^2}{2} 
    +\bigoh(\eps^2).$$
Correspondingly, the boundary condition becomes
\bsub
\begin{equation}
\frac{1}{\eps} \left [ \partial_\zeta G-f_{p_1}\partial_{\mu} G -f_{p_2} \partial_{\nu} G \right ]= - 
\frac{1}{\eps^2}\delta(\mu)\delta(\nu) 
\qquad (p_1,p_2) \in \RR^2 , \qquad \zeta = f
,
\label{theBC}
\end{equation}
where
\begin{equation}\label{eqn:the}
    f_{p_1}(\eps \mu,\eps \nu) = \eps \kappa_1 \mu  +\bigoh(\eps^2) \ , \quad 
f_{p_2}(\eps \mu,\eps \nu) = \eps \kappa_2 \nu +\bigoh(\eps^2) \ .
\end{equation}
\esub
\al{Expansion of the normal derivative yields
\begin{equation}\label{eqn:boundary_expansion}
\partial_{\zeta}G|_{\zeta = f} = \partial_{\zeta}G|_{\zeta = 0} + \eps \Big(\frac{\kappa_1 \mu^2 + \kappa_2 \nu^2}{2}\Big)\partial^2_{\zeta\zeta}G|_{\zeta = 0} + \bigoh(\eps^2).
\end{equation}}
The Green's function is now expanded as
$$G(p_1,p_2,\al{q}) = \frac{1}{\eps} g_0(\mu,\nu, \zeta) 
+ g_1(\mu,\nu, \zeta) + \eps g_2(\mu,\nu, \zeta) + \ajb{R^*(\mu,\nu,\zeta)} +\bigoh(\eps^2) ,$$
\ajb{where we have written $R(\bx;\bx_0) =R^*(\mu,\nu,\zeta)$ which is a harmonic function satisfying a homogeneous Neumann condition on the surface. 
Our strategy at each order of the expansion is to specify boundary conditions for $g_n$ that incorporates all the singular portions of the expansion of the Neumann condition \eqref{eqn:boundary_expansion}.}
\ajb{
There is some arbitrariness in this expansion, largely due to the lack of far-field (matching) conditions for $G$. The regular part satisfies 
\bsub
\label{eq:halfplaneharm}
\begin{align}
    \Delta R^* &= 0, \qquad \zeta >0; \\[5pt]
    \partial_\zeta R^* &= 0, \qquad \zeta=0, \quad (\mu,\nu) \in \RR^2 \ . 
\end{align}
\esub
The general solution to \eqref{eq:halfplaneharm} admits a solution in terms of the standard spherical harmonics.
Ultimately, for this local expansion we are interested in a numerical strategy for determining $R(\vx_0;\vx_0) =R^*(0,0,0) =R_0$, 
that is the regular part of the Green's function at the source. 
}
We now expand the boundary condition \eqref{theBC} and collect terms at each order. \\

\noindent\uline{ $\mathcal{O}(\eps^{-2})$:}
At leading order, we obtain the upper half-plane problem 
\bsub
\begin{align}
    \Delta g_0 &= 0, & \zeta &>0, \quad (\mu,\nu) \in \RR^2; \\[5pt]
    \partial_\zeta g_0 &= -\delta(\mu)\delta(\nu),& \zeta&=0, \quad (\mu,\nu) \in \RR^2,   
\end{align}
\esub
where $\Delta \equiv \partial^2_\mu + \partial^2_\nu+  \partial^2_{\zeta} $.
We choose the half-plane Green's function solution, 
\begin{equation}
    g_0 = \frac{1}{2 \pi \rho} \ .
\end{equation}
\ajb{The solution at this order (and for that matter all successive orders) is arbitrary up to an additive harmonic function that would satisfy \eqref{eq:halfplaneharm}, which we take as zero. This is a gauge choice -- it only effects what is included in $R^*$ vs. $g_0$ . \\ }

\noindent\uline{ $\mathcal{O}(\eps^{-1})$:}
At this order, we expand the boundary condition noting that 
$$
 \partial_{\mu} g_0  = - \frac{\mu}{2 \pi\rho^3}  , \qquad
  \partial_{\nu} g_0  = - \frac{\nu}{2 \pi\rho^3} , \qquad 
\partial_{\zeta} g_0  = - \frac{\zeta} {2 \pi\rho^3} .
$$
Also, on the boundary $\rho(\mu,\nu,\zeta)=\rho(\mu,\nu,0)[1+\bigoh(\eps)] = r[1+\bigoh(\eps)]$. 
This allows us to expand the Neumann boundary condition to obtain the correction problem
\bsub\label{eqn:correction_g1}
\begin{align}
    \Delta g_1 &= 0, \qquad \zeta >0, \quad (\mu,\nu) \in \RR^2; \\[4pt]
    \partial_\zeta g_1 &= 
    - \frac{\kappa_1 \mu^2 + \kappa_2 \nu^2}{4 \pi r^3},
    \qquad \zeta=0, \quad  (\mu,\nu) \in \RR^2 
\end{align}
\esub
The Neumann boundary condition in cylindrical coordinates is
$$ \partial_\zeta g_1 
= 
     - \frac{1}{4\pi r} \left [ \cH +\Dc \cos(2\varphi) \right ]
    \qquad \zeta=0,
$$
\al{where $\Hc = \frac12(\kappa_1+\kappa_2)$ and $ \Dc= \frac12(\kappa_1-\kappa_2)$.} To solve \eqref{eqn:correction_g1}, we seek a solution of form 
\begin{equation}
g_1 = \Xi(r,\zeta) +\chi(r,\zeta) \cos (2 \varphi).
\label{eq:g1}
\end{equation}
Here the coefficient functions satisfy the problems
\bsub
\begin{equation}
\left\{ 
\begin{array}{rl}\Xi_{\zeta \zeta} + \Xi_{r r} + \ds\frac 1 {r} \Xi_r =0,& \quad  r, \zeta>0; \\[5pt]
\Xi_\zeta  = - \ds\frac {\cH} {4\pi r}, & \quad \zeta=0 \ .\\[10pt]
\end{array}
\right.
\quad
\left\{
\begin{array}{rl}
\chi_{\zeta \zeta} + \chi_{r r} + \ds\frac 1 {r}  \chi_{r}  - \ds\frac {4}{{r} ^2} \chi =0,&  \quad r, \zeta>0;\\[5pt]
\chi_\zeta  =  -\ds\frac {\Dc} {4\pi {r} }, & \quad \zeta=0 \ .\\[10pt]
\end{array}
\right.
\end{equation}
\ajb{There is also a regularity condition;  $g_1$ must be analytic as $r \to 0$ at any fixed $\zeta >0$.}
A solution to these problems can be found via an application of the Hankel transform,
\begin{equation}
    \Xi({r} ,\zeta) =
-  \ds\frac {\cH} {4\pi} \log \left [{ \rho + \zeta}\right ], \qquad \chi(r ,\zeta) =  \frac {\Dc} {8 \pi} 
\left [  \frac{\rho - \zeta}{\rho + \zeta} \right ] \ ,
\end{equation}
\esub
This yields the following expression for the monopole correction
\begin{equation}\label{eqn:g1}
   g_1(\mu,\nu,\zeta) = 
- \frac {\cH} {4\pi} \log \left [{ \rho + \zeta} \right ] 
+ \frac {\Dc} {8\pi} 
    \left [  \frac{\rho - \zeta}{\rho + \zeta} \right ] \cos (2 \varphi) \ ,
\end{equation}
\ajb{which again is arbitrary up to an additive harmonic function which satisfies \eqref{eq:halfplaneharm}.}
The second term in the expression \eqref{eqn:g1} can be rewritten as 
$$\left [  \frac{\rho - \zeta}{\rho + \zeta} \right ] \cos (2 \varphi)
= \left [  \frac{\rho - \zeta}{\rho + \zeta} \right ] \frac{\mu^2 -\nu^2}{r^2} = \frac{\mu^2 -\nu^2}{(\rho+\zeta)^2} \ ,$$
by using the relationship $\rho^2 = r^2+\zeta^2$. For fixed positive $\zeta$, we have the behavior 
$$g_1(\mu,\nu,\zeta) \sim  - \frac {\cH} {4\pi} \log  (2 \zeta)  +
\left [ - \frac {\cH} {16\pi \zeta^2}+ \frac {\Dc} {32 \pi \zeta^2} \cos (2 \varphi) \right ] r^2
+\bigoh(r^4),$$
as $r\to0$ which demonstrates that the solution is differentiable (even analytic) in this limit. However, we observe that the solution is singular at the origin; along rays from the origin the mean curvature term blows up like $\log \rho$ and depends on the polar angle while the curvature difference term depends on both the polar and azimuthal angle. \\

\noindent\uline{$\mathcal{O}(\eps^{0})$:} To better understand the error structure of the expansion, we proceed to the next order which requires a higher order expansion of the boundary. This is given by
\begin{equation}\label{eqn:bdy_higher}
    f( p_1, p_2) =  \frac{\kappa_1 p_1^2 +\kappa_2 p_2^2}{2} 
    +
     \left ( \cda p_1^3 
    +\ccb p_1^2p_2 
    +\cbc p_1 p_2^2 
    +\cad p_2^3
   \right) + 
    \bigoh(r^4) \ .
\end{equation}
\al{In the rescaled coordinates \eqref{eqn:rescaling_coords}, this higher order boundary representation \eqref{eqn:bdy_higher} takes the form
\begin{gather*}
    \zeta = \eps f_1 + \eps^2 f_2 + \bigoh(\eps^3),\\[4pt] f_1 = \frac{\kappa_1\mu^2+ \kappa_2 \nu^2}{2}, \qquad f_2 = \cda \mu^3 
    +\ccb \mu^2\nu 
    +\cbc \mu \nu^2 
    +\cad \nu^3
\end{gather*}
The high order expansion of the normal derivative \eqref{eqn:boundary_expansion} is given by 
\begin{equation}
\partial_{\zeta}G|_{\zeta = f} = \partial_{\zeta}G|_{\zeta = 0} 
+ \eps f_1\partial^2_{\zeta\zeta}G|_{\zeta = 0} 
+ \eps^2\Big[ f_2 \partial^2_{\zeta\zeta}G|_{\zeta = 0} + 
\frac{f_1^2}{2} \partial^3_{\zeta\zeta\zeta}G|_{\zeta = 0} \Big ]
+ \bigoh(\eps^3),
\end{equation}
Additionally, we have that
\begin{align}
    f_{p_1}(\eps\mu,\eps\nu) &= \eps\kappa_1\mu + \eps^2(3\cda \mu^2 
    +2\ccb \mu\nu 
    +\cbc \nu^2) + \bigoh(\eps^3)\\[4pt]
    f_{p_2}(\eps\mu,\eps\nu) &= \eps\kappa_2\nu + \eps^2 (\ccb \mu^2 + 2\cbc\mu\nu + 3 \cad \nu^2) + \bigoh(\eps^3)
\end{align}
}
\al{After expanding and simplifying algebra,} the high order correction  $g_2(\mu,\nu,\zeta)$ satisfies
\bsub
\begin{align}
    \Delta g_2 &=  0 \qquad \zeta >0 \\
    \partial_\zeta g_2& = - \left [ 
    \frac{\cH^2}{4 \pi} 
     + \Dc \cH \frac{\mu^4-\nu^4}{2 \pi r^4}
     + \Dc^2 \frac{\mu^4-6\mu^2\nu^2+\nu^4}{4 \pi r^4} \right ]  \\
 \nonumber & \phantom{=}   - \left [ \frac{\cda \mu^3 
    +\ccb \mu^2 \nu 
    +\cbc \mu \nu^2 
    +\cad \nu^3}{\pi r^3} \right ] 
   \qquad  \qquad \zeta=0, \quad  (\mu,\nu) \in \RR^2 \ ,
\end{align}
\esub
The boundary condition can be rewritten in cylindrical coordinates as
\bsub\label{eqn:g_2}
\begin{align}
\label{eqn:g_2a}    \Delta g_2 &= 0, \qquad \zeta >0 ;\\
    \label{eqn:g_2b}  \partial_\zeta g_2 &= - \left [
    \frac{\cH^2}{4 \pi} 
    +   \frac{ \Dc \cH }{2 \pi }  \cos(2\varphi) 
    +  \frac{ \Dc^2 }{4 \pi }  \cos(4\varphi) \right ]  \qquad \qquad \qquad    \zeta=0, \quad  (\mu,\nu) \in \RR^2.
    \\
    \nonumber & \phantom{=}  - \left [ \frac{\cbc+3\cda}{4 \pi} \cos(\varphi)
     + \frac{\ccb +3\cad}{4 \pi} \sin(\varphi)
     + \frac{\cda-\cbc}{4 \pi} \cos(3\varphi)
     + \frac{ \ccb- \cad} {4 \pi} \sin(3\varphi) \right ] 
     \ ,
\end{align}
\esub

By linearity, $g_2$ will be a sum of terms of form 
$$g_2^c(\zeta,r, \varphi)= \Phi_n(\zeta,r)\cos(n \varphi),  \qquad
g_2^s(\zeta,r, \phi)= \Phi_n(\zeta,r)\sin(n \varphi),
$$
where $\Phi_n(\zeta,r)$ satisfies 
\bsub\label{eq:g2_sol}
\begin{align}
\partial^2_{\zeta \zeta}\Phi_{n} + \partial^2_{r r}\Phi_{n} + \frac 1 r \partial_{r }\Phi_{n} - \frac {n^2}{r^2} \Phi_n &=0  \qquad \zeta>0; \\[4pt]
 \partial_{\zeta}\,\Phi_{n}  &= 1  \qquad \zeta=0 .
\end{align}
\esub
\ajb{There is again a regularity condition;  $g_2$ must be analytic as $r \to 0$ at any fixed $\zeta >0$.} For $n=0$,
$$\Phi_0(\zeta,r)=\zeta \, ,$$
and using Green's function and similarity methods, we find that
\bsub
\begin{align}
\Phi_1(\zeta,r)&=\frac r 2 \log (\rho+\zeta) + \frac{\zeta(\rho-\zeta)}{2r},& 
\Phi_2(\zeta,r)&= \zeta-\frac {2}{3} \frac{(\rho^3-\zeta^3)}{r^2 },
 \\
\Phi_3(\zeta,r)&= \frac{\zeta(\rho^3-\zeta^3)}{r^3 }
-\frac{3\zeta^2}{2r} - \frac{3r}{8}
,& 
\Phi_4(\zeta,r)&= -\frac {4 \rho^3(r^2+6\zeta^2)}{15r^4} +\zeta+\frac{8\zeta^3}{3r^2 } +\frac{8\zeta^5}{5r^4} ,
\end{align}
\esub
where $\rho=\sqrt{r^2+\zeta^2}$. 
\ajb{These terms are not infinitely differentiable at the origin, however,  
$\lim_{\rho\to 0} \Phi_n =0 $ for $n=0,1,2,3,4$. From this we conclude that $g_2$ does not contribute to the regular part at $\bx_0$.}

\subsection{Reconstituting the Green's function and the regular part}
At this point, we bring together the final expression for the singular part of the Green's function, $G_{\text{sing}}$. In terms of the coordinates $(p_1,p_2,\al{q}) = (\eps \mu, \eps \nu, \eps \zeta)$, we have that
\begin{align*} 
\frac{1}{\eps}g_0(\mu,\nu,\zeta)&=  g_0(p_1,p_2,\al{q}) \ ,  \\[4pt]
g_1(\mu,\nu,\zeta) & =  g_1(p_1,p_2,\al{q})  + \frac{\cH}{4 \pi}\log \eps
\ ,  \\[4pt]
\eps g_2(\mu,\nu,\zeta) &=  g_2(p_1,p_2,\al{q})
+ \left [C_1p_1 +C_2 p_2  \right ] \log \eps
,
\end{align*}
for appropriate constants $C_1$ and $C_2$. This allows us to identify 
\begin{align*}
G_{\text{sing}}(p_1,p_2,\al{q}) &\sim  g_0(p_1,p_2,\al{q}) + g_1(p_1,p_2,\al{q}) +g_2(p_1,p_2,\al{q}) \\
&\sim  g_0(p_1,p_2,\al{q}) + g_1(p_1,p_2,\al{q}) + 
\ajb{\bigoh (|\vx-\vx_0| \log |\vx-\vx_0|)} \ .
\end{align*}
Recalling the definition $\al{q} = (\bx-\bx_0)\cdot \hn$, we arrive at the form of the singularity behavior
\begin{align}
\nonumber G_{\text{sing}}(p_1,p_2,\al{q}) &\sim g_0(p_1,p_2,\al{q}) + g_1(p_1,p_2,\al{q}) \\
&\sim \frac{1}{2 \pi |\bx-\bx_0|} - \frac {\cH} {4\pi} \log \left [{ |\bx-\bx_0| + \al{q}} \right ] 
+ \frac {\Dc} {8\pi} \left[ \frac{
|\bx-\bx_0| - \al{q}}{|\bx-\bx_0| + \al{q}} \right ] \cos (2 \varphi), 
\label{eqn:Gsing}
\end{align}
\al{as $\bx\to\bx_0$.} 
\al{The interior problem with a boundary source has an identical singularity structure to the exterior problem, but with sign changes in the coefficients of the curvature dependent terms. The $1/|\Omega|$ term on the right hand side of \eqref{eq:eqnG_interior} generates a smooth particular solution and is included in the regular part.}

\al{In Section~\ref{Sec:Numerics}, we will introduce a regular-singular decomposition of the Green's function which matches the precise local behavior \eqref{eqn:Gsing} as $\bx\to\bx_0$, but contains certain modifications to ensure stable numerical evaluation on general domains.}

\section{Boundary integral solution of the regular part of the Green's function}\label{Sec:Numerics}

In order to obtain accurate solutions of \eqref{eq:eqnG_exterior} and \eqref{eq:eqnG_interior} which effectively capture the singularities in the Neumann Green's function as $\bx \to \bx_0,$ we first perform a  standard reduction of the problem to a boundary integral equation (BIE) \cite{kress} for an unknown density $\sigma$ on the boundary. A point of departure from existing BIE methods is the singular nature of the boundary data, which requires a custom discretization of the portion of the geometry in the vicinity of the source, and careful evaluation of the right hand side in order to mitigate effects of catastrophic cancellations and retain high-order convergence. 

\jgh{Intuitively, it is tempting to subtract off the three singular terms in the asymptotic expansion $G^{i,e}_{\rm sing}$ from $G^{i,e}$ as in \eqref{eqn:Gsing}, reducing the problem to a boundary value problem with weaker singularities near the source $\bx_0.$ Unfortunately, for interior problems, or certain exterior problems on non-convex domains, this generates additional singularities away from $\bx_0$ due to the singularities in $G^{i,e}_{\rm sing}$ lying along the ray emanating from $\bx_0$ in the negative normal direction. In order to circumvent this issue, we modify $G^{i,e}_{\rm sing},$ constructing new functions which have the same singularities near $\bx_0,$ but with singularities only along finite length line segments rather than rays. In practice, this amounts to adding additional `image sources' outside the domain.}

In \jgh{the remainder of} this section, we outline our \jgh{numerical} approach. We begin by introducing several auxiliary functions and their analytic properties which are useful in the construction of suitable boundary data for our integral equations. Following this, we review the reduction of \eqref{eq:eqnG_exterior} and \eqref{eq:eqnG_interior} to BIEs and the classical approach for enforcing integral constraints in the interior problem. Finally, we sketch a common approach to discretize BIEs and detail the modifications required to treat our boundary data.

\subsection{Analytic properties of the corrections}\label{eq:analytical_corrections}

Consider the functions $\phi,\psi$ defined by
\begin{equation}
    \phi(\mu,\nu,\al{q}) = \log( {\rho}+\al{q}), \qquad
\psi(\mu,\nu,\al{q}) = \frac{(\mu^2-\nu^2)}{(\rho+\al{q})^2},
\end{equation}
with $\rho = \sqrt{\mu^2+\nu^2+\al{q}^2}.$ A direct calculation shows that $\phi$ and $\psi$ are harmonic away from the half-line $\{(0,0,\al{q})\,|\, \al{q} \le 0 \}.$ In this section we describe modifications of the functions $\phi$ and $\psi$ which have the same behavior near the origin, but have singularities lying only on a finite line segment. The key observation is given in the following lemma.

\begin{lemma}
    Given $\phi$ and $\psi$ as defined above, for any $(\mu,\nu,\al{q})$ not on the ray $\{(0,0,\al{q})\,|\, \al{q}\le 0\},$ we have
    $$\frac{\partial}{\partial \al{q}} \phi(\mu,\nu,\al{q}) = \frac{1}{\rho}, \qquad\frac{\partial^3}{\partial \al{q}^3}\psi(\mu,\nu,\al{q}) = -2\left( \frac{\partial^2}{\partial \mu^2} \frac{1}{\rho}- \frac{\partial^2}{\partial \nu^2} \frac{1}{\rho}\right).$$
\end{lemma}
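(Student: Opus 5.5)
The plan is a direct computation, valid wherever $\rho+q>0$. That set is exactly the complement of the ray $\{(0,0,q)\,|\,q\le 0\}$: for $(\mu,\nu)\neq(0,0)$ we have $\rho>|q|$, while on the axis with $q>0$ we have $\rho+q=2q>0$. In this region both $\phi$ and $\psi$ are smooth, and every step below is legitimate.

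\emph{First identity.} Use $\partial_q\rho = q/\rho$. Then
$$\partial_q\log(\rho+q) = \frac{q/\rho+1}{\rho+q} = \frac{1}{\rho}.$$

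\emph{Second identity.} Split the two sides so that the angular factor $\mu^2-\nu^2$ appears on both. For the left side, write $\psi = (\mu^2-\nu^2)\,h$ with $h=(\rho+q)^{-2}$; since $\mu^2-\nu^2$ does not depend on $q$, $\partial_q^3\psi = (\mu^2-\nu^2)\,\partial_q^3 h$. For the right side, differentiate $1/\rho$ twice:
$$\partial_\mu^2\frac{1}{\rho} = -\frac{1}{\rho^3}+\frac{3\mu^2}{\rho^5},$$
and similarly in $\nu$. The isotropic $-1/\rho^3$ parts cancel in the difference, leaving
$$-2\left(\partial_\mu^2\frac{1}{\rho}-\partial_\nu^2\frac{1}{\rho}\right) = -\frac{6(\mu^2-\nu^2)}{\rho^5}.$$
So the lemma reduces to the scalar identity $\partial_q^3 h = -6/\rho^5$.

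\emph{Proving the scalar identity.} Differentiate $h$ repeatedly, each time expressing the result as $h$ times a rational function of $\rho$ and $q$:
\begin{itemize}
\item The first derivative satisfies $h' = -2(\rho+q)^{-3}(1+q/\rho) = -2h/\rho$.
\item Then $h'' = -2h'/\rho + 2hq/\rho^3 = 2h(2\rho+q)/\rho^3$.
\item One more derivative, with all terms over the common denominator $\rho^5$, gives a bracket that collapses to $-3(\rho^2+2q\rho+q^2) = -3(\rho+q)^2$.
\end{itemize}
This factor cancels $h=(\rho+q)^{-2}$, so $h''' = -6/\rho^5$, which completes the proof.

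\emph{Expected difficulty.} The only real obstacle is bookkeeping in the third derivative, namely recognizing the perfect square $(\rho+q)^2$ after combining terms. Keeping the $h$ factor out front and writing each stage over a power of $\rho$ keeps this step routine. An alternative route uses the substitution $\psi=(\mu^2-\nu^2)(\rho-q)^2/r^4$, but the route through $h$ avoids dividing by $r$, so it also covers the positive $q$-axis without a separate argument.
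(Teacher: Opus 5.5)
Your proposal is correct and is essentially the argument the paper has in mind; the paper omits the proof as a straightforward direct calculation. You show $\partial_q\log(\rho+q)=1/\rho$, reduce the second identity to $\partial_q^3(\rho+q)^{-2}=-6/\rho^5$ (the bracket is indeed $-3(\rho+q)^2$), and correctly identify the region where $\rho+q>0$ as the complement of the ray.
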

The proof is straightforward, and omitted. Applying the fundamental theorem of calculus immediately gives the following corollary.

\begin{corollary}
 For any $\eta_2<0$ and any $(\mu,\nu,\al{q})$ not on the ray $\{(0,0,\al{q})\,|\, \al{q}\le 0\},$
 \begin{align}\label{eqn:phi_int}
 \phi(\mu,\nu,\al{q}) - \phi(\mu,\nu,\al{q}-\eta_2) = -\int_{\eta_2}^0 \frac{1}{\sqrt{\mu^2+\nu^2+(\al{q}-t)^2}}\,{\rm d}t,
 \end{align}
 and
  \begin{align}\label{eqn:psi_int}
  &\psi(\mu,\nu,\al{q}) - \psi(\mu,\nu,\al{q}-\eta_2) \jgh{-}\eta_2 \psi'(\mu,\nu,\al{q}-\eta_2) -\frac{\eta_2^2}{2} \psi''(\mu,\nu,\al{q}-\eta_2)\\
  &\quad \quad \quad= 2\int_{\eta_2}^0 \int_{\eta_2}^t \int_{\eta_2}^s\left( \frac{\partial^2}{\partial \mu^2} \frac{1}{\sqrt{\mu^2+\nu^2+(\al{q}-u)^2}}- \frac{\partial^2}{\partial \nu^2} \frac{1}{\sqrt{\mu^2+\nu^2+(\al{q}-u)^2}}\right)\,{\rm d}u\,{\rm d}s\,{\rm d}t, \nonumber
  \end{align}
  with $\psi'(\mu,\nu,\al{q}) = \frac{\partial}{\partial \al{q}} \psi(\mu,\nu,\al{q})$ and $\psi''(\mu,\nu,\al{q}) = \frac{\partial^2}{\partial \al{q}^2} \psi(\mu,\nu,\al{q}).$ 
\end{corollary}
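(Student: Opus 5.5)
The plan is to apply the fundamental theorem of calculus, and its iterated form (Taylor's theorem with integral remainder), to one-variable functions of the shift $t$, with $(\mu,\nu,\al{q})$ held fixed. The identities in the preceding lemma supply the integrands.

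\textbf{Step 1: the segment avoids the singular ray.} Fix $\eta_2<0$ and a point $(\mu,\nu,\al{q})$ off the ray $\{(0,0,\al{q})\,|\,\al{q}\le 0\}$. Consider the points $(\mu,\nu,\al{q}-t)$ for $t\in[\eta_2,0]$. Since $-t\ge 0$, these points are obtained by moving in the positive $\al{q}$ direction.
\begin{itemize}
\item If $(\mu,\nu)\neq(0,0)$, none of these points lies on the ray.
\item If $\mu=\nu=0$, then $\al{q}>0$, so $\al{q}-t\ge \al{q}>0$ and again the ray is avoided.
\end{itemize}
On a neighbourhood of this compact segment, $\phi$ and $\psi$ are therefore smooth and the lemma applies at every point. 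All integrands below are continuous on the compact domain of integration, so every integral is well defined.

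\textbf{Step 2: the identity for $\phi$.} Set $h(t)=\phi(\mu,\nu,\al{q}-t)$. By the chain rule and the lemma,
\[
h'(t)=-\frac{1}{\sqrt{\mu^2+\nu^2+(\al{q}-t)^2}} .
\]
The fundamental theorem of calculus gives $h(0)-h(\eta_2)=\int_{\eta_2}^0 h'(t)\,{\rm d}t$, which is exactly \eqref{eqn:phi_int}.

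\textbf{Step 3: the identity for $\psi$.} Set $k(t)=\psi(\mu,\nu,\al{q}-t)$. Differentiating gives
\[
k'(t)=-\psi'(\mu,\nu,\al{q}-t), \qquad k''(t)=\psi''(\mu,\nu,\al{q}-t), \qquad k'''(t)=-\psi'''(\mu,\nu,\al{q}-t).
\]
Apply the fundamental theorem of calculus three times, successively to $k''$, $k'$ and $k$, each time integrating from the base point $\eta_2$. This gives the second-order Taylor expansion of $k$ about $\eta_2$ with iterated integral remainder:
\[
k(0)=k(\eta_2)+(0-\eta_2)k'(\eta_2)+\frac{\eta_2^2}{2}k''(\eta_2)+\int_{\eta_2}^0\int_{\eta_2}^t\int_{\eta_2}^s k'''(u)\,{\rm d}u\,{\rm d}s\,{\rm d}t .
\]
Now substitute the expressions above:
\begin{itemize}
\item $-\eta_2 k'(\eta_2)=\eta_2\psi'(\mu,\nu,\al{q}-\eta_2)$;
\item $k''(\eta_2)=\psi''(\mu,\nu,\al{q}-\eta_2)$;
\item by the lemma, $k'''(u)=-\psi'''(\mu,\nu,\al{q}-u)=2\big(\partial_\mu^2-\partial_\nu^2\big)\big(\mu^2+\nu^2+(\al{q}-u)^2\big)^{-1/2}$.
\end{itemize}
Moving the Taylor terms to the left-hand side yields \eqref{eqn:psi_int}.

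\textbf{Main obstacle.} The only step needing care is the sign bookkeeping in Step 3. Because $\eta_2<0$ and the argument is $\al{q}-t$, each derivative in $t$ introduces a factor $-1$. These factors must be tracked so that the linear term appears with the sign $-\eta_2\psi'$ and the third-derivative identity from the lemma enters with an overall $+2$. I would check this by comparing both sides for a simple test function, for instance a cubic in $\al{q}$.
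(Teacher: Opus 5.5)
Your proof is correct and follows the paper's route: the paper says only that the corollary follows from the lemma by the fundamental theorem of calculus. You make this explicit by applying it once for $\phi$ and three times (second-order Taylor expansion with iterated-integral remainder) for $\psi$, and your signs are all right. Your Step 1 check, that the segment $\{(\mu,\nu,q-t)\,:\,\eta_2\le t\le 0\}$ stays off the singular ray so that every integrand is continuous, supplies a detail the paper leaves implicit.
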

By interchanging the order of derivatives and integration, it is easily seen that the right-hand sides of (\ref{eqn:phi_int}) and (\ref{eqn:psi_int}) are harmonic away from the line segment $\{(0,0,\al{q})\,|\, \eta_2 \le \al{q}\le 0\}$ and are $O(1/\rho)$ and $O(1/\rho^3),$ respectively, as $\rho \to \infty$ uniformly in angle. It follows that the left-hand sides of the corresponding equations have removable singularities on the ray $\{(0,0,\al{q})\,|\, \al{q} < \eta_2\}.$

The following corollary, which amounts to a change of the order of integration in (\ref{eqn:psi_int}), gives a more convenient expression for the integral in (\ref{eqn:psi_int}).

\begin{corollary}\label{cor:psi}
Let $\psi$ and $\eta_2$ be as above. Then
\begin{align}
&\psi(\mu,\nu,\al{q}) - \psi(\mu,\nu,\al{q}-\eta_2) \jgh{-}\eta_2 \psi'(\mu,\nu,\al{q}-\eta_2) -\frac{\eta_2^2}{2} \psi''(\mu,\nu,\al{q}-\eta_2)\\
  &\quad \quad \quad= \int_{\eta_2}^0 t^2\left( \frac{\partial^2}{\partial \mu^2} \frac{1}{\sqrt{\mu^2+\nu^2+(\al{q}-t)^2}}- \frac{\partial^2}{\partial \nu^2} \frac{1}{\sqrt{\mu^2+\nu^2+(\al{q}-t)^2}}\right)\,{\rm d}t. \nonumber
\end{align}
\end{corollary}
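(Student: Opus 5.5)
The plan is to start from identity \eqref{eqn:psi_int} of the preceding corollary and rewrite only its right-hand side; the left-hand sides of the two statements are identical. For fixed $(\mu,\nu,\al{q})$ off the ray $\{(0,0,\al{q})\,|\,\al{q}\le 0\}$, write
\[
F(u) = \frac{\partial^2}{\partial \mu^2} \frac{1}{\sqrt{\mu^2+\nu^2+(\al{q}-u)^2}}- \frac{\partial^2}{\partial \nu^2} \frac{1}{\sqrt{\mu^2+\nu^2+(\al{q}-u)^2}} .
\]
The claim then reduces to the one-variable identity
\[
2\int_{\eta_2}^0 \int_{\eta_2}^t \int_{\eta_2}^s F(u)\,{\rm d}u\,{\rm d}s\,{\rm d}t = \int_{\eta_2}^0 t^2 F(t)\,{\rm d}t .
\]
This is Cauchy's formula for repeated integration, taken with base point $0$.

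The first step is to check that $F$ is continuous, hence bounded, on $[\eta_2,0]$, so that Fubini's theorem applies to the triple integral. The only possible singularity of $F(u)$ occurs when $\mu=\nu=0$ and $u=\al{q}$. Off the excluded ray, either $\mu^2+\nu^2>0$, in which case the denominator never vanishes, or $\mu=\nu=0$ with $\al{q}>0$. In the second case $\al{q}-u\ge \al{q}>0$ for every $u\le 0$. Either way $F$ is smooth on the compact interval $[\eta_2,0]$.

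The second step is to write the domain of integration as the simplex $\{\eta_2\le u\le s\le t\le 0\}$ and integrate in $u$ last. For fixed $u\in[\eta_2,0]$, the $(s,t)$-region is $u\le s\le t\le 0$, whose area is
\[
\int_u^0\int_u^t {\rm d}s\,{\rm d}t=\int_u^0 (t-u)\,{\rm d}t = \frac{u^2}{2}.
\]
The triple integral therefore equals $\int_{\eta_2}^0 \frac{u^2}{2}F(u)\,{\rm d}u$. Multiplying by the factor $2$ and renaming $u$ as $t$ gives the stated formula.

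There is no genuine obstacle here. The only points needing care are the orientation of the limits, which are all increasing since $\eta_2<0$ so that no sign flips arise, and the fact that the weight is $(0-u)^2/2$ rather than $(u-\eta_2)^2/2$. The weight is anchored at the upper endpoint because the outermost variable $t$ ranges up to $0$. As a sanity check, I would differentiate both sides three times with respect to $\eta_2$, or compare with \eqref{eqn:phi_int} in the analogous one-fold case.
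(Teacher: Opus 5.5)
Your proof is correct and is essentially the paper's argument, which obtains this corollary by interchanging the order of integration in \eqref{eqn:psi_int}; your computation that the $(s,t)$-slice of the simplex $\{\eta_2\le u\le s\le t\le 0\}$ has area $u^2/2$ is exactly that step, and your continuity check on $[\eta_2,0]$ supplies the Fubini justification that the paper leaves implicit. (In the language of Cauchy's repeated-integration formula, the lower base point is $\eta_2$ and the evaluation point is $0$, which is what your computation actually uses.)
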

Finally, the following lemma establishes properties of a related function that will be convenient in numerical simulations.
\begin{lemma}
    For any $\eta_2< \eta_1 <\eta_0 <0,$ \al{let}
    \[c_0 = -\frac{\eta_1 \eta_2}{(\eta_1-\eta_0)(\eta_0-\eta_2)},\qquad c_1 = -\frac{\eta_0 \eta_2}{(\eta_0-\eta_1)(\eta_1-\eta_2)},\qquad c_2 = -\frac{\eta_0 \eta_1}{(\eta_0-\eta_2)(\eta_2-\eta_1)},
    \]
    \al{and define}
  \bsub
  \begin{align}
      \psi_{\eta_0,\eta_1,\eta_2}(\mu,\nu,\al{q})&=\psi(\mu,\nu,\al{q})-c_0\psi(\mu,\nu,\al{q}-\eta_0)-c_1\psi(\mu,\nu,\al{q}-\eta_1)-c_2\psi(\mu,\nu,\al{q}-\eta_2),\label{eqn:psidef}\\[5pt]
       \al{\phi_{\eta_0,\eta_1,\eta_2}(\mu,\nu,\al{q})} &\jgh{=\phi(\mu,\nu,\al{q})-c_0\phi(\mu,\nu,\al{q}-\eta_0)-c_1\phi(\mu,\nu,\al{q}-\eta_1)-c_2\phi(\mu,\nu,\al{q}-\eta_2)\label{eqn:phidef}.}
  \end{align}
  \esub
    Then $\psi_{\eta_0,\eta_1,\eta_2}$ is harmonic, except on $\{(0,0,\al{q})\,|\, \eta_2 \le \al{q} \le 0\},$ and is $O(1/\rho^3)$ uniformly in angle as $\rho \to \infty.$
\end{lemma}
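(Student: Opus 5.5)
The plan is to view the combination in \eqref{eqn:psidef} as an interpolation error in the shift variable, and then reuse the third-derivative identity from the first lemma of this subsection, exactly as in Corollary~\ref{cor:psi}. Fix a point $(\mu,\nu,\al{q})$ off the ray $\{(0,0,\al{q})\,|\,\al{q}\le 0\}$ and set
\[
F(t)=\psi(\mu,\nu,\al{q}-t),\qquad t\in[\eta_2,0].
\]
For every $t\in[\eta_2,0]$ the shifted point $(\mu,\nu,\al{q}-t)$ also lies off that ray. Indeed, if $(\mu,\nu)\neq(0,0)$ this is immediate, and if $(\mu,\nu)=(0,0)$ then $\al{q}>0$ and $\al{q}-t>0$. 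Hence $F$ is smooth on $[\eta_2,0]$.

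The first step is to identify the coefficients. A direct check shows that $c_0,c_1,c_2$ are the Lagrange basis polynomials for the nodes $\eta_0,\eta_1,\eta_2$, evaluated at $0$; for example
\[
c_0=\frac{(0-\eta_1)(0-\eta_2)}{(\eta_0-\eta_1)(\eta_0-\eta_2)}.
\]
Consequently
\[
\psi_{\eta_0,\eta_1,\eta_2}(\mu,\nu,\al{q})=F(0)-\sum_{j}c_jF(\eta_j)
\]
is exactly the error of quadratic interpolation of $F$ at $0$. This error vanishes whenever $F$ is a polynomial of degree at most two.

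The second step applies the Peano kernel theorem to this error functional. It gives
\[
\psi_{\eta_0,\eta_1,\eta_2}(\mu,\nu,\al{q})=\int_{\eta_2}^{0}K(t)\,F'''(t)\,{\rm d}t,
\]
where $K$ is a bounded, piecewise polynomial kernel that depends only on the $\eta_j$ and is supported in $[\eta_2,0]$. One could equivalently write Taylor expansions about $\eta_2$, in the spirit of Corollary~\ref{cor:psi}, and cancel the terms of degree at most two. The chain rule gives $F'''(t)=-\psi'''(\mu,\nu,\al{q}-t)$, so by the first lemma of this subsection
\[
F'''(t)=2\left(\frac{\partial^2}{\partial\mu^2}-\frac{\partial^2}{\partial\nu^2}\right)\frac{1}{\sqrt{\mu^2+\nu^2+(\al{q}-t)^2}}.
\]
Combining the two displays expresses $\psi_{\eta_0,\eta_1,\eta_2}$ as a weighted superposition, over $t\in[\eta_2,0]$, of second derivatives of free-space sources located at $(0,0,t)$.

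The third step reads off the conclusions from this representation. Differentiating under the integral sign shows that the right-hand side is harmonic away from the segment $\{(0,0,\al{q})\,|\,\eta_2\le\al{q}\le0\}$. The points of the ray with $\al{q}<\eta_2$ are therefore removable singularities of the left-hand side. Note that each term of \eqref{eqn:psidef} is already harmonic off its own singular ray, so the left-hand side is harmonic off the full ray even before using the integral. For the decay, second derivatives of $1/\sqrt{\mu^2+\nu^2+(\al{q}-t)^2}$ are $O(\rho^{-3})$ uniformly in angle and uniformly in $t\in[\eta_2,0]$, since $t$ ranges over a bounded set. Because $K$ is bounded and supported on a bounded interval, the integral is $O(1/\rho^3)$.

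I expect the main obstacle to be the justification of the Peano representation and its sign conventions. This requires checking that the functional annihilates quadratics, which is exactly what the choice of $c_j$ guarantees, and tracking the minus sign coming from $\partial_t=-\partial_{\al{q}}$. If this route becomes messy, the fallback is to apply Corollary~\ref{cor:psi} three times, once for each $\eta_j$, each time with base point $\eta_2$. The Lagrange identities $\sum_j c_j\eta_j^k=0^k$ for $k=0,1,2$ then cancel the $\psi$, $\psi'$ and $\psi''$ terms at $\al{q}-\eta_2$ and leave only integrals of the same harmonic, $O(\rho^{-3})$ form.
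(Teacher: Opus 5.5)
Your proposal is correct and is essentially the paper's proof. The Peano kernel you describe, $K(t)=\tfrac12\bigl[t^2-\sum_j c_j(\eta_j-t)_+^2\bigr]$ on $[\eta_2,0]$, reproduces exactly the paper's representation $\psi_{\eta_0,\eta_1,\eta_2}=I-c_0I_0-c_1I_1$; the paper reaches the same formula by your stated fallback, namely Taylor expansion about $\eta_2$ via Corollary~\ref{cor:psi} followed by cancellation using $\sum_j c_j\eta_j^k=\delta_{k,0}$, and your checks that the shifted points stay off the singular ray and that the $O(\rho^{-3})$ bound is uniform in $t$ fill in details the paper leaves implicit.
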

\begin{proof}
Using Corollary \ref{cor:psi} we observe that for $\psi_j(\mu,\nu,\al{q}) := \psi(\mu,\nu,\al{q}-\eta_j),$ then 
$$\psi_j(\mu,\nu,\al{q}) = \psi(\mu,\nu,\al{q}-\eta_2) \jgh{+} (\eta_2-\eta_j) \psi'(\mu,\nu,\al{q}-\eta_2) +\frac{(\eta_2-\eta_j)^2}{2}\psi''(\mu,\nu,\al{q}-\eta_2) + I_j,$$
where 
$$I_j =\int_{\eta_2}^{\eta_j} (t-\eta_j)^2\left( \frac{\partial^2}{\partial \mu^2} \frac{1}{\sqrt{\mu^2+\nu^2+(\al{q}-t)^2}}- \frac{\partial^2}{\partial \nu^2} \frac{1}{\sqrt{\mu^2+\nu^2+(\al{q}-t)^2}}\right)\,{\rm d}t,$$
for $j=0,1,2.$ It is easily seen that the coefficients $c_0,c_1,c_2$ satisfy $\sum_{j=0}^2\eta_j^k c_j =\delta_{k,0},$ with $k=0,1,2.$ It follows that
$$\psi_{\eta_0,\eta_1,\eta_2}(\mu,\nu,\al{q}) = I-c_0 I_0 - c_1 I_1,$$
with 
$$I =\int_{\eta_2}^{0} t^2\left( \frac{\partial^2}{\partial \mu^2} \frac{1}{\sqrt{\mu^2+\nu^2+(\al{q}-t)^2}}- \frac{\partial^2}{\partial \nu^2} \frac{1}{\sqrt{\mu^2+\nu^2+(\al{q}-t)^2}}\right)\,{\rm d}t.$$
 The rest of the proof follows immediately from the properties of the integrands of $I,I_0,$ and $I_1.$ \jgh{A similar construction can be applied to $\phi$ to obtain $\phi_{\eta_0,\eta_1,\eta_2}.$}
\end{proof}

\jgh{Finally, considering $\psi_{\eta_2,\eta_1,\eta_0}$ and $\phi_{\eta_2,\eta_1,\eta_0}$  defined by (\ref{eqn:psidef}) and (\ref{eqn:phidef}), respectively, for $\eta_2<\eta_1<\eta_0<0$ it follows straightforwardly from the definitions that $\psi_{-\eta_2,-\eta_1,-\eta_0}(\mu,\nu,-\al{q})$ and $\phi_{-\eta_2,-\eta_1,-\eta_0}(\mu,\nu,-\al{q})$ are harmonic functions of $(\mu,\nu,\al{q})$ away from the line segment $\{(0,0,s)\,|\, 0 \le s  \le -\eta_2\}.$}

\subsection{Derivation of the integral equation}

\subsubsection{General notation}
In this subsection we introduce general notation and operators which will be used in the construction, discretization, and solution of our boundary integral equations.

Given a point $\bx_0 \in \partial \Omega,$ let $\hn(\bx_0)$ denote the outward pointing normal of $\partial \Omega$ at $\bx_0$. Similarly, relative to that normal, let $\kappa_{1}(\bx_0)$ and $\kappa_{2}(\bx_0)$ denote the principal curvatures of $\partial \Omega$ at $\bx_0,$ the quantities $\Hc(\bx_0)$, $\Dc(\bx_0)$ denote their sums and differences and let $\htan_1(\bx_0)$, $\htan_2({\bx_0})$ be the principal directions. In the following, it will be convenient to work in the rotated and shifted coordinate frame $\bx -\bx_0= \al{p_1} \, \htan_1({\bx_0})+ \al{p_2} \, \htan_2(\bx_0)+q \, \hn ( {\bx_0)}.$

For $\eta_2<\eta_1<\eta_0 <0,$ we define the functions
\begin{align}
Q^{\pm}_{\eta_0,\eta_1,\eta_2}(\al{p_1},\al{p_2},q;\bx_0) &= \frac{1}{2\pi | \bx -\bx_0|}\mp\frac{\Hc(\bx_0)}{4 \pi} \phi_{\pm \eta_0,\pm \eta_1,\pm \eta_2}(\al{p_1},\al{p_2},\pm q)\\
& \pm\frac{\Dc(\bx_0)}{8\pi} \psi_{\pm \eta_0,\pm \eta_1,\pm \eta_2}(\al{p_1},\al{p_2},\pm q). \nonumber
\end{align}
For the exterior function, \jgh{in order for $Q^{+}$ to be harmonic in the exterior of $\Omega,$ the line segment on which its singularities lie ($\bx_0+s\hn (\bx_0),$ $s\in [\eta_2,0]$) must lie entirely within $\Omega.$ Analogously, for interior problems, $\bx_0-s\hn (\bx_0),$ $s\in [\eta_2,0]$ must lie outside of the interior of $\Omega.$ Additionally, if $|\eta_0|,|\eta_1|,$ and $|\eta_2|$ are too small, then the `image points' will induce extra nearly-singular terms in $\partial_n Q^\pm$ near $\bx_0.$ If they are too large then, depending on the nature of $\partial \Omega,$ they can generate additional nearly singular behavior at other points on $\partial \Omega$ which would require additional refinement. In practice, we observe empirically that our numerical method is fairly insensitive to the precise choice of $\eta_0,\eta_1,$ and $\eta_2.$} We note that for many geometry/source configurations, these `image points' can be omitted entirely for the interior problem. \jgh{The following lemma establishes the regularity of the normal derivative of $Q^\pm$ on $\partial \Omega.$
\begin{lemma}
Let $Q^\pm_{\eta_0,\eta_1,\eta_2}$ be as above, with $\eta_2<\eta_1<\eta_0<0$ chosen so that the line segment $\ell_\pm =\{\bx_0 \pm s \bn(\bx_0)\,|\,\eta_2 \le s <0\}$ does not intersect $\partial \Omega.$ Then $\partial_n Q^{\pm}$ is smooth on $\partial \Omega$ away from $\bx_0,$ and bounded near $\bx_0.$
\end{lemma}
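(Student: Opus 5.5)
Away from $\bx_0$ the argument is qualitative; near $\bx_0$ it is a local cancellation against the construction of Section~\ref{sec:local_derivation}. We treat $Q^+$; the case $Q^-$ is identical after $q\mapsto -q$ and $\eta_j\mapsto-\eta_j$.

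\emph{Away from $\bx_0$.} By the last lemma, $\phi_{\eta_0,\eta_1,\eta_2}$ and $\psi_{\eta_0,\eta_1,\eta_2}$ are harmonic, hence real analytic, off the segment $\{(0,0,q)\,|\,\eta_2\le q\le 0\}$. The term $1/(2\pi|\bx-\bx_0|)$ is analytic off $\bx_0$. By hypothesis the segment $\ell_\pm$ meets $\partial\Omega$ only at its endpoint $\bx_0$. Since $\partial\Omega$ is smooth, $Q^\pm$ is real analytic in a neighborhood of $\partial\Omega\setminus\{\bx_0\}$. Composing with the smooth unit normal field shows $\partial_n Q^\pm$ is smooth on $\partial\Omega\setminus\{\bx_0\}$. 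The same argument handles the $\phi$ parts of the image terms.

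\emph{Near $\bx_0$: splitting.} For small $|\bx-\bx_0|$ we split
\[
Q^+ = \Big[g_0+g_1\Big] + \Big[-\tfrac{\Hc}{4\pi}\big(\phi_{\eta_0,\eta_1,\eta_2}-\phi\big)+\tfrac{\Dc}{8\pi}\big(\psi_{\eta_0,\eta_1,\eta_2}-\psi\big)\Big],
\]
with $g_0,g_1$ as in \eqref{eqn:Gsing}. The second bracket is a finite combination of $\phi(\cdot,\cdot,q-\eta_j)$ and $\psi(\cdot,\cdot,q-\eta_j)$ with $\eta_j<0$. Each of these has its singular ray starting at $(0,0,\eta_j)$, which lies a fixed distance from $\bx_0$. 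So this bracket is smooth near $\bx_0$, and its normal derivative is bounded there.

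\emph{Near $\bx_0$: the main estimate.} It remains to show that $\partial_n(g_0+g_1)$ is bounded on $\partial\Omega$ near $\bx_0$, away from $\bx_0$ itself. This is the step I expect to be the main obstacle, because the boundedness rests on exact cancellations, both explicit and hidden, in the expansion.

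Parametrize $\partial\Omega$ as $q=f(p_1,p_2)$ and write $\partial_n=(\partial_q-f_{p_1}\partial_{p_1}-f_{p_2}\partial_{p_2})/\sqrt{1+|\nabla f|^2}$. Expand in $r$ using homogeneity:
\begin{itemize}
\item $g_0$ is homogeneous of degree $-1$ and $g_1$ of degree $0$ (up to the $\log$ term, whose derivatives are homogeneous of degree $-1$).
\item On the surface $q=O(r^2)$ and $f_{p_i}=O(r)$.
\end{itemize}
With these, $\partial_n g_0$ at order $r^{-1}$ equals $\big(q-p_1 f_{p_1}-p_2f_{p_2}\big)\cdot(-1/(2\pi\rho^3))$. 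Its leading part is $-(\kappa_1p_1^2+\kappa_2p_2^2)/(4\pi r^3)\cdot(-1)$, i.e. $+(\kappa_1p_1^2+\kappa_2p_2^2)/(4\pi r^3)$, using $q-p\cdot\nabla f=-f+O(r^3)$ for the quadratic part.

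The order-$r^{-1}$ part of $\partial_n g_1$ is $\partial_q g_1|_{q=0}$. By \eqref{eqn:correction_g1} this equals $-(\kappa_1p_1^2+\kappa_2p_2^2)/(4\pi r^3)$, so the two order-$r^{-1}$ terms cancel exactly. This is precisely how $g_1$ was constructed.

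\emph{Remainder terms.} What remains is $O(1)$, coming from three sources: the cubic coefficients $c_{ij}$ in $\partial_n g_0$, the Taylor correction $f\,\partial_{qq}g_1$, and $f_{p_i}\partial_{p_i}g_1$. Each of these is bounded by homogeneity: a degree-$0$ function of angle times $r^0$. I would verify this explicitly, using that $\partial_{qq}g_1$ and $\partial_{p_i}g_1$ evaluated at $q=O(r^2)$ are $O(r^{-2})$ and $O(r^{-1})$ respectively, with no logarithms after differentiation.

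The delicate point is the $\log(\rho+q)$ term. On a concave patch $q$ can be negative, and there $\rho+q\approx r^2/(2\rho)$. To handle this, I would rewrite $\log(\rho+q)=\log(r^2)-\log(\rho-q)$ on the relevant side, and check that $\rho+q\ge c\,r^2/\rho>0$ on the surface, since the surface avoids the negative ray. The derivatives $\partial(\rho+q)/(\rho+q)$ then stay $O(r^{-1})$, so the estimates above survive. This yields boundedness, though not continuity, of $\partial_nQ^\pm$ at $\bx_0$; the $O(1)$ terms are angle-dependent and consistent with $g_2$.
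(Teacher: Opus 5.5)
Your proposal is correct and follows essentially the same route as the paper. Both arguments discard the image terms, which are smooth near $\bx_0$, and show that the $O(\rho^{-1})$ parts of the normal derivatives of $1/(2\pi\rho)$ and of the $\Hc$, $\Dc$ corrections cancel, leaving $-(\bn\cdot\br+\tfrac12\kappa_1p_1^2+\tfrac12\kappa_2p_2^2)/(2\pi\rho^3)+O(1)$ with an $O(\rho^3)$ numerator; the paper does this with $\bn(\bx)$ and $W^\pm$ rather than your graph operator $\partial_q-f_{p_i}\partial_{p_i}$. Your closing ``delicate point'' is a non-issue. On $\partial\Omega$ near $\bx_0$ one has $|q|=O(r^2)\ll r$, so $\rho+q=r+O(r^2)$ whatever the sign of $q$. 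The regime $\rho+q\approx r^2/(2\rho)$ arises only near the negative normal ray, which the surface does not approach there; the paper's one-line remark $\rho\pm q\approx\rho$ covers this.
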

\begin{proof}
By construction, the functions $\phi_{\pm \eta_0,\pm \eta_1,\pm \eta_2}$ and $\psi_{\pm \eta_0,\pm \eta_1,\pm \eta_2}$ are smooth on $\partial \Omega\setminus \{\bx_0\}.$ Similarly, the normal derivative of $1/(2\pi|\bx-\bx_0|)$ is smooth away from $\bx_0,$ and hence $\partial_n Q^{\pm}_{\eta_0,\eta_1,\eta_2}$ is smooth on $\partial \Omega$ away from $\bx_0.$ Near $\bx_0,$ $\phi-\phi_{\pm \eta_0,\pm\eta_1,\pm\eta_2}$ and $\psi-\psi_{\pm \eta_0,\pm\eta_1,\pm\eta_2}$ are smooth and hence it suffices to consider the Neumann trace of
$$\widetilde{Q}^\pm(\al{p_1},\al{p_2},q;\bx_0) := \frac{1}{2\pi |\bx-\bx_0|} \mp \frac{\mathcal{H}(\bx_0)}{4\pi} \phi(\al{p_1},\al{p_2},\pm q) \pm \frac{\mathcal{Q}(\bx_0)}{8\pi} \psi(\al{p_1},\al{p_2},\pm q).$$
Taking the normal derivative of $\widetilde{Q}^\pm$ yields
\begin{align*}
    \partial_n \widetilde{Q}^\pm(\al{p_1},\al{p_2},q;\bx_0) &= -\frac{\bn(\bx)\cdot \br}{2\pi \rho^3}\mp \frac{\mathcal{H}(\bx_0)}{4\pi (\rho\pm q)}W^\pm \\[4pt]
    &\qquad\pm \frac{\mathcal{Q}(\bx_0)}{8\pi}\left[-2\frac{W^\pm (\al{p_1}^2-\al{p_2}^2)}{(\rho\pm q)^3} +2\frac{\al{p_1} \,\hat{\bf t}_1(\bx_0)\cdot \bn(\bx)-\al{p_2}\, \hat{\bf t}_2(\bx_0)\cdot \bn(\bx)}{(\rho\pm q)^2}\right],
\end{align*}
with $\br = \bx-\bx_0,$ $\rho = |\br|,$ and $W^\pm = \bn(\bx)\cdot \br / \rho\, \pm\, \bn(\bx_0)\cdot \bn(\bx).$ Note that for $\bx$ sufficiently close to $\bx_0,$  $\bn(\bx_0) \cdot \br = \frac{1}{2}(\kappa_1 \al{p_1}^2+\kappa_2 \al{p_2}^2)+O(\rho^3) = O(\rho^2)$ as $\bx \to \bx_0$ from $\partial \Omega.$ It follows immediately that $\rho \pm q = \rho \pm \br \cdot \bn(\bx_0) \approx \rho$ in the limit $\bx\to \bx_0$ from $\partial \Omega.$ Similarly, 
$$\bn(\bx) = \frac{-\kappa_1 \al{p_1}\, \hat{\bf t}_1(\bx_0)-\kappa_2 \al{p_2}\, \hat{\bf t}_2(\bx_0)+\bn(\bx_0)}{\sqrt{1 + \al{p_1}^2   \kappa_1^2+\al{p_2}^2\kappa_2^2}} +O(\rho^2),$$
and hence $\bn(\bx)\cdot \br = -\frac{1}{2}(\kappa_1 \al{p_1}^2+\kappa_2 \al{p_2}^2)+O(\rho^3).$ 
Additionally, this expression gives us that $\hat{\bf t}_{1,2}(\bx_0)\cdot \bn(\bx)=O(\rho)$ and $\bn(\bx)-\bn(\bx_0) = O(\rho).$ Using the above estimates, we find
$$W^\pm =\pm1 +O(\rho^2),$$
and so
$$\partial_n \widetilde{Q}^\pm (\al{p_1},\al{p_2},q;\bx_0) = -\frac{\bn(\bx)\cdot\br}{2\pi \rho^3} - \frac{\mathcal{H}(\bx_0)}{4\pi \rho}- \mathcal{Q}(\bx_0)\frac{\al{p_1}^2-\al{p_2}^2}{4\pi \rho^3} +O(1).$$
Simplifying, this gives
\begin{align}\label{eqn:rhsasym}
\partial_n \widetilde{Q}^\pm (\al{p_1},\al{p_2},q;\bx_0) = -\frac{\bn(\bx)\cdot\br + \kappa_1 \al{p_1}^2/2 +\kappa_2 \al{p_2}^2/2}{2\pi \rho^3} +O(1).
\end{align}
Now, for $\bx$ near $\bx_0,$ the asymptotic form of $\bn(\bx)\cdot \br$ given above tell us that the leading order terms in $\partial_n \tilde{Q}^\pm$ cancel, and $\partial_n \tilde{Q}^\pm = O(1)$ as $\bx\to \bx_0$ from $\partial \Omega.$
\end{proof}
}

Given a density $\sigma \in L^2(\partial \Omega),$ we define the standard single layer operator $S$ (see \cite{kress}, \jgh{Chapter 6} for example) by
$$S[\sigma](\bx) = \int_{\partial \Omega} G_F(\bx -\bx')\,\sigma(\bx')\,{\rm d}A(\bx'), \qquad G_F(\bx) = \frac{1}{4\pi|\bx|}.$$
When thinking of it as a map from $L^2(\partial \Omega) \to L^2(\partial \Omega)$ we denote it by $\mathcal{S}.$ It is well-known for example (see \cite{kress} \jgh{Chapter 6 and the references therein}), that for $\sigma \in L^2(\partial \Omega),$ the normal limit of the normal derivative of $S$ exists, in an $L^2$ sense, and
$$\lim_{\epsilon \to 0} \hn({\bx}) \cdot \nabla S[\sigma] (\bx\pm\epsilon \, \hn(\bx)) = \mp \frac{1}{2}\sigma(\bx) + {\rm p.v.} \int_{\partial \Omega} \hn({\bx}) \cdot \nabla_{\bx} G_F(\bx-\bx')\, \sigma(\bx ')\,{\rm d}A(\bx').$$
The integral on the right-hand side is weakly-singular and so the principal value can be dropped. For $\sigma \in L^2(\partial \Omega)$ we let $\mathcal{S}':L^2(\partial \Omega) \to L^2(\partial \Omega)$ denote the operator defined by
$$\mathcal{S}'[\sigma](\bx) = \int_{\partial \Omega} \hn({\bx}) \cdot \nabla_{\bx} G_F(\bx-\bx')\, \sigma(\bx ')\,{\rm d}A(\bx').$$

\subsubsection{The exterior problem}

When solving the exterior problem we consider the ansatz
$$G^e(\bx;\bx_0) = Q^+_{\eta_0,\eta_1,\eta_2}(\bx;\bx_0) + S[\sigma_{\bx_0}](\bx)$$
where $\sigma_{\bx_0}$ is some yet to be determined density. Note that by construction $G^e$ is harmonic in the exterior region $\mathbb{R}^3\setminus\Omega.$ Taking the normal limit of the normal derivative of $G^e,$ and inserting it into our boundary condition, we obtain the integral equation
\begin{align}\label{eqn:ext_bie}
-\frac{1}{2} \sigma_{\bx_0}(\bx)+ \mathcal{S}'[\sigma_{\bx_0}](\bx) = - \partial_n Q^+_{\eta_0,\eta_1,\eta_2}(\bx;\bx_0).   
\end{align}
Here, $\partial_n Q^+$ denotes the normal derivative of $Q^+$ on surface, i.e. excluding the delta function. Standard results give existence and uniqueness for (\ref{eqn:ext_bie}), see \cite{kress} for example.

\subsubsection{The interior problem}

Turning to the interior problem, we use the ansatz
$$G^i(\bx;\bx_0) = Q^-_{\eta_0,\eta_1,\eta_2}(\bx;\bx_0) + S[\sigma_{\bx_0}](\bx) + \frac{\al{p_1}^2+\al{p_2}^2}{4 |\Omega|}+\frac{\beta}{|\Omega|},$$
where again $\sigma_{\bx_0}$ is some yet to be determined density and $\beta$ is  constant. The first term contributes the required delta function to $\partial_n G^i(\bx;\bx_0)$ near $\bx =\bx_0$ and the third term enforces the constraint $\Delta G^i = 1/|\Omega|$ in $\Omega.$ If we again take the normal limit of the normal derivative of $G^i,$ and insert it into our boundary condition, we obtain the integral equation
\begin{align}\label{eq:int_BC}
\frac{1}{2} \sigma_{\bx_0}(\bx)+ \mathcal{S}'[\sigma_{\bx_0}](\bx) = - \partial_n Q^-_{\eta_0,\eta_1,\eta_2}(\bx;\bx_0) - \frac{1}{|\Omega|}\partial_n p(\bx),
\end{align}
with $p(\bx) = (\al{p_1}^2+\al{p_2}^2)/4.$ The operator on the left-hand side of the above equation has a one-dimensional nullspace. Indeed, it is easily shown \cite{kress} that for any $\mu \in L^2(\partial \Omega),$ 
$$\int_{\partial \Omega}\mathcal{S}'[\mu](\bx)\,{\rm d}A(\bx) = -\frac{1}{2}\int_{\partial \Omega} \al{\mu(\bx)}\,{\rm d}A(\bx),$$
i.e. constant functions are in the left nullspace. Consistency of the integral equation then requires
$$ \int_{\partial \Omega} \left[\partial_n Q^-_{\eta_0,\eta_1,\eta_2}(\bx;\bx_0) + \frac{1}{|\Omega|}\partial_n p(\bx) \right]\, {\rm d}A(\bx) = 0,$$
which follows from Green's theorem. To remove the nullspace, we impose the additional condition that $\int_{\partial \Omega} {\sigma}_{\bx_0}(\bx)\,{\rm d}A(\bx) = 0.$ We then arrive at the integral equation 
\begin{align}\label{eqn:int_bie}
\frac{1}{2} {\sigma}_{\bx_0}(\bx)+ \mathcal{S}'[{\sigma}_{\bx_0}](\bx) + \frac{1}{|\jgh{\partial \Omega}|}\int_{\partial \Omega} {\sigma}_{\bx_0}(\bx')\,{\rm d}A(\bx') = - \partial_n Q^-_{\eta_0,\eta_1,\eta_2}(\bx;\bx_0) - \frac{1}{|\Omega|}\partial_n p(\bx).
\end{align}
To satisfy the volume integral constraint (\ref{eq:eqnG_interior_c}) on $G^i$, we require that
\begin{align*}
\beta = -\int_{\Omega} \left[ Q^-(\bx) + S[{\sigma}_{\bx_0}](\bx) + \frac{\al{p_1}^2+\al{p_2}^2}{4 |\Omega|}\right]\,{\rm d}V(\bx).    
\end{align*}
 To simplify the right-hand side, we note that $\Delta p = 1,$ and by integrating by parts,
\begin{align*}
 \beta &=  -\int_{\Omega} \, \underbrace{\Delta p}_{=1} \, \left[ Q^-(\bx) + S[{\sigma}_{\bx_0}](\bx) + \frac{\al{p_1}^2+\al{p_2}^2}{4 |\Omega|}\right]\,{\rm d}V(\bx) \\
  &= - \int_{\partial \Omega} \partial_n p \left[ Q^-(\bx) + S[{\sigma}_{\bx_0}](\bx) + \frac{\al{p_1}^2+\al{p_2}^2}{4 |\Omega|}\right]\,{\rm d}A(\bx)\\
   &\quad + \int_{\partial \Omega} p(\bx) \partial_n \al{G^i(\bx;\bx_0)}\,{\rm d}A(\bx) -\frac{1}{|\Omega|}\int_{\Omega} p(\bx)\,{\rm d}V(\bx).
\end{align*}
Using the fact that $\partial_nG^i(\bx;\bx_0) = -\delta(\bx -\bx_0)$ we see that since $p(\bx_0)=0,$ the second term on the right-hand side of the above equation vanishes. Additionally, $p(\bx) = \nabla \cdot\frac{1}{12}(\htan_1(\bx_0) \al{p_1}^3+\htan_2(\bx_0) \al{p_2}^3),$ so the last integral in the above expression can be computed via a boundary integral. Putting this all together, the condition for $\beta$ may be written in the form
$$\beta = -\gamma - \int_{\partial \Omega} \sigma_{\bx_0} S[\partial_np](\bx)\,{\rm d}A(\bx),$$
where we have applied Fubini's theorem to the integral and 
$$\gamma =\int_{\partial \Omega} \left[\partial_n p \Big[ Q^-(\bx;\bx_0) + \frac{\al{p_1}^2+\al{p_2}^2}{4 |\Omega|}\Big]+ \frac{1}{|\Omega|} \hn({\bx}) \cdot \frac{1}{12}\Big(\htan_1({\bx_0}) \al{p_1}^3+\htan_2({\bx_0}) \al{p_2}^3\Big)\right]\, {\rm d}A(\bx).$$
Note that despite the singularities in $Q^-,$ the integrand vanishes at $\bx_0.$

\begin{remark}
In \cite{chakraborty2025fastintegralmethodsneumann}, a slightly different formulation is given. In essence, in that paper the constant term $\beta$ is determined at the same time as the density $\sigma_{\bx_0}.$ In this work we first compute $\sigma_{\bx_0}$ and then obtain $\beta$ afterwards in the postprocessing step.
\end{remark}

\begin{remark}
    For off-surface sources, either exterior or interior, the singular part of the Neumann Green's function is simply the free-space Laplace Green's function, and the regular part has smooth normal derivatives on $\partial \Omega.$ For the exterior problem, the regular part can be obtained by using the ansatz $G(\bx;\bx_0) = G_F(\bx-\bx_0)+S[\sigma_{\bx_0}](\bx)$ and solving 
    (\ref{eqn:ext_bie}) with a different right hand side (see~\cite{kress} for more details). For the interior problem, we use the ansatz $G(\bx;\bx_0) = G_F(\bx-\bx_0)+S[\sigma_{\bx_0}](\bx)+\frac{|\bx|^2}{6|\Omega|}+\frac{\beta}{|\Omega|}$, solve (\ref{eqn:int_bie}) with the corresponding right hand side, and compute $\beta$ as before (see~\cite{chakraborty2025fastintegralmethodsneumann} for more details).
\end{remark}

\subsection{Discretization of the integral equations}
In this paper we use a standard collocation method for the efficient discretization and solution of the integral equations (\ref{eqn:ext_bie}) and (\ref{eqn:int_bie}), which are both of the general form
\begin{align}\label{eqn:ref_int_eq}
\sigma + K[\sigma] = f.
\end{align}
To make the presentation as self-contained as possible we briefly sketch the main idea of the approach. We refer the interested reader to \cite{loc_cor_quad} for more details and references, and to the software package \emph{fmm3dbie} \cite{fmm3dbie} which we have used in our numerical illustrations.

\subsubsection{Discretization of the equations}\label{subseq:disc}
In the following we let $T_0$ denote the standard right-triangle $T_0= \{(x,y) \in \mathbb{R}_{> 0}^2\,|\, x+y < 1\}$ and assume that the boundary of $\Omega$ is specified via a union of smooth mutually-disjoint maps $\phi_j:T_0 \to \partial \Omega,$ with $j=1,\ldots, N_{\rm patches}.$ Let $T_j$ denote the image of $T_0$ under $\phi_j.$

The integral equation (\ref{eqn:ref_int_eq}) can then be written as
\begin{align}\label{eqn:bie_sum_patches}
\sigma(\bx) + \sum_{j=1}^{N_{\rm patches}}\int_{T_0} K(\bx, \phi_j(u,v))\,  \sigma(\phi_j(u,v))\,|\,g_j(u,v)|\,{\rm d} A(u,v)  = f (\bx),
\end{align}
where $g_j$ denotes the determinant of the metric tensor of the $j^{\text{th}}$ patch which we assume to be boundedly invertible on the closure of $T_0.$ 

Next, we fix a basis $\tau_k:T_0 \to \mathbb{R},$ $k=1,\ldots, N_{\rm basis},$ and let $\{(u_k,v_k)\}_{k=1}^{N_{\rm basis}} \subset T_0$ be an associated set of discretization nodes. Typically, we assume that the nodes are constructed so that the coefficients-to-values matrix $\al{V}:=\{\tau_\ell (u_k,v_k)\}_{k,\ell=1}^{N_{\rm basis}}$ is as well-conditioned as possible. We then relax (\ref{eqn:bie_sum_patches}), instead only requiring consistency at our discretization nodes,
\begin{align}\label{eqn:bie_sum_patch_red}
\sigma_i(u_k,v_k) + \sum_{j=1}^{N_{\rm patches}}\int_{T_0} K(\phi_i(u_k,v_k), \phi_j(u,v))\,  \sigma_j(u,v)\,{|g_j|}\,{\rm d} A  = f_i(u_k,v_k),
\end{align}
where $\sigma_i(u,v) = \sigma(\phi_i(u,v))$ and $f_i(u,v) = f(\phi_i(u,v)).$ Assuming that $\sigma_i$ is approximately in the span of the basis functions $\tau_k,$ then
$$\sigma_i(u,v) \approx \sum_{j=1}^{N_{\rm basis}} \tau_j(u,v) (V^{-1})_{j,k} \sigma_i(u_k,v_k),$$
which we can insert into (\ref{eqn:bie_sum_patch_red}) to obtain
\begin{align}\label{disc_syst}
\sigma_{i,k} + \sum_{j=1}^{N_{\rm patches}} \sum_{\ell=1}^{N_{\rm basis}}K_{i,k,j,\ell} \sigma_{j,\ell} \approx f_{i,k}, \qquad i=1,\ldots, N_{\rm patches};\, \quad k=1,\ldots,N_{\rm basis},
\end{align}
with $\sigma_{i,k} = \sigma_i(u_k,v_k),$ $f_{i,k} = f_i(u_k,v_k)$ and
\begin{align}\label{eqn:k_def}
K_{i,k,j,\ell} = \sum_{m=1}^{N_{\rm basis}}(V^{-1})_{m,\ell}\int_{T_0}K(\phi_i(u_k,v_k), \phi_j(u,v))\,  \tau_m(u,v) \,{|g_j|}\,{\rm d} A.
\end{align}

In this paper, except for the patches touching $\bx_0,$ we choose the basis on $T_0$ to be Koornwinder polynomials of degree at most $N_{\rm order},$ which give an orthonormal basis of the monomials $u^j v^k,$ $j,k \ge 0,$ $j+k \le N_{\rm order}$ on $T_0$ (we defer discussion of the near-$\bx_0$ patches to Section \ref{sec:rhs}). With this choice, $N_{\rm basis} = (N_{\rm order}+1)(N_{\rm order}+2)/2.$ For the corresponding discretization nodes, we use the $N_{\rm order}$ Vioreanu-Rokhlin (VR) nodes \cite{vioreanu2014spectra}, which are chosen so as to give a well-conditioned $V$ matrix. Associated with the VR nodes $(u_j,v_j)$ is a set of quadrature weights $\{w_j\}_{j=1}^{N_{\rm basis}}$ such that the quadrature rule with nodes $(u_j,v_j)$ and weights $w_j$ integrate all Koornwinder polynomials of degree at most $N_{\rm order}$ exactly.

\subsubsection{Quadrature for weakly-singular integral operators}
The remaining piece is the numerical calculation of the integrals $K_{i,k,j,\ell}$ in (\ref{eqn:k_def}). Here we give a brief sketch of the approach taken in \emph{fmm3dbie}, more details of which can be found in \cite{loc_cor_quad}. Broadly speaking, for a given {\it target} patch number $i$ we split the {\it source} patches $j = 1,\ldots,N_{\rm patches}$ into three sets: self ($i=j$), near patches ($i \neq j$ but the $i^{\text{th}}$ patch is `close' to the $j^{\text{th}}$ patch), and far (patches which are neither self nor near). In our numerical experiments we use the default settings of \emph{fmm3dbie} \cite{fmm3dbie, bremer, xiaogimbutas}.

\subsubsection{Far quadrature}
When the source patch $T_j$ is a sufficient distance from the target patch $T_k,$ the kernel $K(\phi_i(u_k,v_k),\phi_j(u,v))$ is in $C^\infty(\overline{T_0}),$ as is $\sqrt{|g_j|}.$ For these integrals we use the \emph{native quadrature rule}, 
\bsub
\begin{align}
    K_{i,k,j,\ell}&\approx \sum_{n=1}^{N_{\rm basis}}\sum_{m=1}^{N_{\rm basis}}\al{\left[K(\phi_i(u_k,v_k),\phi_j(u_n,v_n)) \tau_m(u_n,v_n) \left| g_j(u_n,v_n)\right|\,w_{n}\right] (V^{-1})_{m,\ell}},\\
    &=\al{K(\phi_i(u_k,v_k),\phi_j(u_l,v_l)) \left| g_j(u_l,v_l)\right|\,w_{l}},
\end{align}
\esub
where we have used the definition of $V$ in going from the first line to the second.

\begin{remark}\label{rem:oversampling}
In practice the above integrals are frequently computed using oversampling: integrals are computed using a Vioreanu-Rokhlin quadrature rule that is higher order than the order of the polynomial expansions. In most settings, it is infeasible to assume that one has an analytic description of the surface available at the time when matrix entries are being computed. We describe how to address this in the next section.
\end{remark}

\subsubsection{Near quadrature}
For source patches $T_j$ lying close to the target patch $T_i,$ the kernel $K$ will be nearly-singular, and so its coefficients when expressed in the Koornwinder polynomial basis will decay extremely slowly, if at all. In this case, the native quadrature rule, or the oversampling procedure alluded to in Remark \ref{rem:oversampling}, will not be accurate enough without an impractical number of nodes. Instead, we use adaptive integration, subdividing the patch $T_j$ into smaller patches recursively until the error is within a prescribed tolerance. In practice, when splitting patches one rarely has access to an analytic description of the surface to query. Instead, the maps  $\phi_j$ are approximated by maps $\hat{\phi}_j$ given by Koornwinder polynomial expansions. In particular, we represent our surface $\partial \Omega$ by storing the values of $\phi_j,$ $\partial_u \phi_j,$ $\partial_v \phi_j,$ and other surface properties (normals, shape operators, etc.) at Vioreanu-Rokhlin nodes $(u_k,v_k)$ for each patch. If, when the surface is being constructed, the derivatives of the maps $\phi_j$ are not given analytically, then approximations are computed via numerical differentiation. With these approximate maps $\hat{\phi}_j,$ we can interpolate relevant geometric quantities to any point in $T_0.$ We refer the interested reader to \cite{loc_cor_quad} for a more thorough discussion of this procedure and its attendant considerations.

\subsubsection{Self quadrature}\label{subseq:self}
For the self quadrature, when the source patch and the target patch are the same, once again the singularities of $K$ preclude using the native quadrature rule, though adaptive integration, as for the near quadrature, can still be used. In practice, since many levels of refinement are required for adaptive integration on self patches, this can become prohibitively slow. Instead, a standard approach is to use {\it generalized Gaussian quadratures} \cite{bremer}, which \al{use} preconstructed tables of specially constructed quadrature rules which are designed to integrate functions with singularities like those found in our kernels. We refer the reader to \cite{bremer, loc_cor_quad} for a detailed description of these quadratures and their use.

\subsubsection{Singularities in the right-hand sides}\label{sec:rhs}
For smooth domains and smooth right-hand sides, the above discretization and quadrature procedure yields high-order convergence, and can be easily coupled to standard fast algorithms such as the fast multipole method (FMM)~\cite{greengard1987fast} and fast direct solvers~\cite{martinsson2025fast,ho2012fast}. When sources are present on the boundary, however, the right-hand side contains $\partial_n Q^{\pm}_{\eta_0,\eta_1,\eta_2}$ which is not smooth in the vicinity of $\bx_0$ (see Fig.~\ref{fig:examp_Q}). More specifically, it is easily seen that the right-hand side is smooth (except for a removable singularity) on any smooth curve passing through the source $\bx_0.$ The limiting behavior as $\bx \to \bx_0$, however, depends on the direction of approach. To see this, we note that the images at $\bx_0+\eta_j \hat\bn(\bx_0)$ for $j=0,1,2$ contribute a smooth term to $\partial_n Q^{\pm}_{\eta_0,\eta_1,\eta_2}$. The remaining piece can be expressed as a ratio of smooth functions of $\mu$ and $\nu$, whose limiting behavior is given by the right hand side of \eqref{eqn:g_2b} as $\bx\to\bx_0$.

\begin{figure}
\centering
\subfigure[Right hand side of \eqref{eq:int_BC}: $-\partial_n Q^{-}(\bx;\bx_0)- \frac{1}{|\Omega|}\partial_n p(\bx)$.\label{fig:examp_Q}]{\includegraphics[width = 0.475\textwidth]{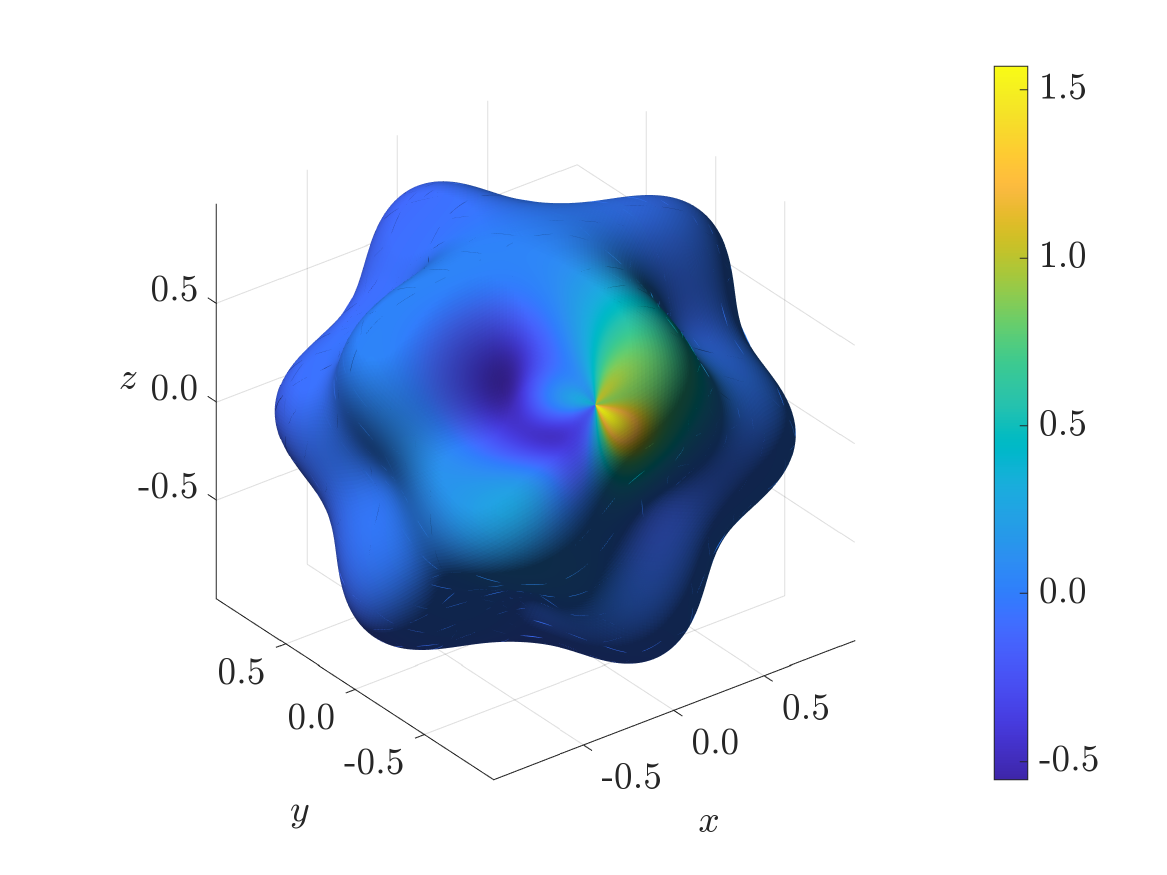}}
\qquad
\subfigure[Second coordinate $v\in(0,1)$ or $t\in (-1,1)$ on each patch. \label{fig:blob_coords}]
{\includegraphics[width = 0.475\textwidth]{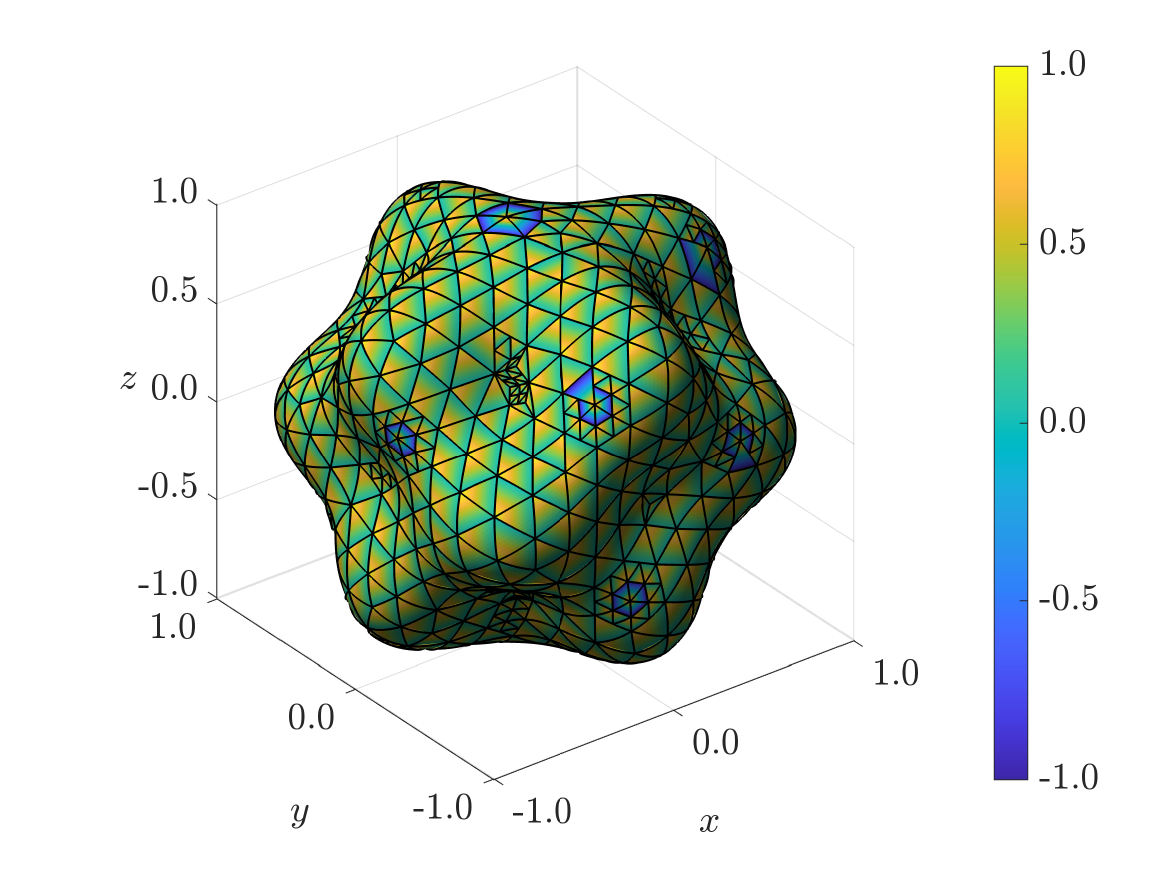}}
\caption{Example right hand side and discretization for a sphere with a radius that has been perturbed by a spherical harmonic, $0.3\cdot  \Re Y_6^4$. Left: An example right hand side for (\ref{eq:int_BC}), which has a singularity at the source $\bx_0=( -0.2910,-0.8131,0.5674)\in\partial\Omega$. Right: The orientation of our mesh elements. In this discretization, we have introduced a few collections of Duffy patches, so that the same quadratures can be used to compute $G(\bx;\bx_0)$ for multiple $\bx_0$.}
\end{figure}

This singularity presents two difficulties. Firstly, neither the density nor the right-hand side are well-represented in a piecewise polynomial basis, leading to slow convergence. Secondly, the formula for $\partial_n Q^{\pm}$ involves cancellations between singular terms which, if computed naively, give rise to catastrophic cancellations. Here we summarize our approach for overcoming these issues.

The first step is to ensure that the source $\bx_0$ coincides exactly with a vertex of the triangulation of $\{T_i\}$. This can be easily done with standard meshing tools such as Gmsh~\cite{gmsh} or distmesh~\cite{distmesh}. Let $\mathcal{I}$ be the set of indices $i$ for which the corresponding triangle $T_i$ in the mesh has a vertex at $\bx_0$. For convenience, we assume that these patches are oriented so that $\phi_i(0,1)=\bx_0$ for $i\in \mathcal{I}$. For each $\theta$, $\bs r_\theta(r):=\phi_i(r \cos\theta, 1-r\sin\theta)$ is a curve passing through $\bx_0$ that smoothly depends on $\theta$.
The above discussion then implies that
$\partial_nQ^\pm_{\eta_0,\eta_1,\eta_2}\left(\phi_i(r \cos\theta, 1-r\sin\theta)\right)$ will be a smooth function of $r$ and $\theta$ for each $i\in\mathcal{I}$. As mentioned above, the polar singularity prevents $\partial_nQ^\pm_{\eta_0,\eta_1,\eta_2}\circ \phi_i$ from being well represented by the basis of Koornwinder polynomials. Instead, we introduce the Duffy transform~\cite{duffy1982quadrature}
\begin{align*}
    \psi(s,t) = \left(\frac{s+1}{2} \frac{1-t}{2},\; \frac{t+1}{2}\right), 
\end{align*}
which is a bijection from the square $(-1,1)^2$ into $T_0$. This parameterization is chosen so that the lines of constant $s$ are mapped to different straight lines passing through the point $(0,1)$. Thus 
\begin{align*}
    \partial_nQ^\pm_{\eta_0,\eta_1,\eta_2}\circ \phi_i \circ \psi(s,t),
\end{align*}
will be a smooth function of $(s,t)\in (-1,1)^2$. It can therefore be well represented in the basis $\tilde \tau_{k,k'}(s,t) := P_k(s) P_{k'}(t)$ with $k,k'=1,\ldots ,N_{\rm order}+1$ of tensor product Gauss-Legendre polynomials. With this in mind, on the square $(-1,1)^2$ we choose our discretization points to be tensor products of Gauss-Legendre nodes: $\{(s_k,s_{k'})\}_{k,k'=1}^{N_{\rm order}+1}$. An example of the patch orientation is shown in Fig.~\ref{fig:blob_coords}. With this choice of basis function and discretization nodes, the discretization and quadrature scheme summarized in Sections \ref{subseq:disc}-\ref{subseq:self} can be extended \emph{mutatis mutandis} to accommodate these modified patches, though we choose the order of these `Duffy' patches to be four higher than the order of triangular patches. Extensive numerical evidence suggests that the density $\sigma$ is also well-resolved on the Duffy patches.

The addition of Duffy patches enables high-order convergence, even near the source, and so eliminates the need for excessive refinement. \al{This, in turn, reduces the effect of catastrophic cancellations, since quadrature nodes within the larger Duffy patches on the right-hand side are typically farther from the source than they would be if many levels of refinement were used.} Still, for high accuracy and robustness, care is required when evaluating $\partial_nQ^\pm_{\eta_0,\eta_1,\eta_2}$ at nodes close to $\bx_0,$ as the expression contains a difference of singular terms. Specifically, it can be expressed as a sum of bounded terms plus \al{the fraction in 
(\ref{eqn:rhsasym}).}
While the other terms can be easily evaluated without catastrophic cancellations, the cancellation of the \al{three terms in the numerator of (\ref{eqn:rhsasym}) is fairly delicate. Let
$$F(s,t) = \hn(s,t) \cdot (\bx(s,t) - \bx_0)+\frac12\kappa_1 p_1(s,t)^2 +\frac12 \kappa_2 p_2(s,t)^2,$$
be the numerator from that expression.}
To stably evaluate $F,$ we note \al{that} $F(s,1) = \partial_t F(s,1) = \partial_t^2 F(s,1) = 0$~\cite{hsiao2021boundary}, and hence
\begin{align}
    F(s,t) = -\int_t^1 \frac12 (t-t')^2\partial_t^3 F(s,t')\,{\rm d}t'.\label{eq:F_dttt}
\end{align}
This formula explicitly uses the fact that $F = O(|\bx-\bx_0|^3)$ and does not involve cancellations, and so gives a stable way for computing the most singular part of $\partial_nQ^\pm_{\eta_0,\eta_1,\eta_2}$. We discretize the integral in (\ref{eq:F_dttt}) using a 16th order Gauss-Legendre rule. In order to evaluate the integrand, we use the parameterization or implicit function defining our surface to compute $\partial_t^3F$ at the nodes $(s_k,s_{k'})$. We then interpolate it to the required positions. 
\al{For surfaces where the required derivatives are not known, we instead approximate $\partial_t^3F$ by differentiating the tensor product Legendre interpolant of $F$ on the patch.} While the accuracy of these numerically computed derivatives is limited, this approach still avoids the catastrophic cancellation inherent in $F$ and enables a sufficiently accurate evaluation of  $\partial_nQ^\pm_{\eta_0,\eta_1,\eta_2}$.

\subsection{Derivatives with respect to source location}
In many applications the computation of the Neumann Green's function, and its regular part, appear in an objective function in which the location of the source appears as a parameter to be optimized over. If gradients with respect to source locations are computed using finite difference, catastrophic cancellation can lead to loss of accuracy. Instead, one can obtain these derivatives directly from the integral equation. For sources which are off-surface (either in the interior or exterior), the right hand side $f$ in the integral equation (\ref{eqn:ref_int_eq}) is smooth, and can be differentiated in $\bx_0.$ The integral equation can then be discretized and solved `as-is' with this modified right-hand side in order to compute the derivative of the solution with respect to $\bx_0.$

For on-surface sources, differentiating $Q^{\pm}$ with respect to $\bx_0$ leads to a more singular right-hand side. In principle this can be handled by changing the basis used to represent the density in the vicinity of the source. Computing the right-hand side would still require care to avoid catastrophic cancellations in the evaluation of the more singular data. An alternative approach is to use reciprocity. In particular, if $\bx,\bx_0 \in \partial \Omega,$ $\bx \neq \bx_0,$ Green's identities give
$$G(\bx;\bx_0) = G(\bx_0;\bx).$$
Hence, $\nabla_{\bx_0}G(\bx;\bx_0) = \nabla_{\bx_0}G(\bx_0;\bx).$ Derivatives with respect to the target do not change the right-hand side or the solution of (\ref{eqn:ref_int_eq}), and appear only in the post-processing to evaluate the field.

\section{Results}\label{sec:Results}
In this section we demonstrate our method on a variety of examples and benchmark against several scenarios where closed-form or highly accurate solutions of \eqref{eq:eqnG_exterior} and \eqref{eq:eqnG_interior} are available.

\subsection{Sphere}\label{eq:sphere_case}
For the unit sphere, closed-form solutions for both the interior and exterior Green's functions are available. 

\noindent\underline{Interior bulk:} For $\bx\in\Omega$, $\bx' = \bx/|\bx|^2$ and $\bx_0\in \Omega$, we have that (see \cite[Appendix A]{ChevWard2010})
\bsub\label{eq:bulk_greens_int_sphere}
\begin{align}
\label{eq:bulk_greens_int_sphere_a} G^i(\bx;\bx_0) &= \frac{1}{4\pi|\bx-\bx_0|} + R^i(\bx;\bx_0),\\
\label{eq:bulk_greens_int_sphere_b} R^i(\bx;\bx_0)& = \frac{1}{4\pi|\bx||\bx'-\bx_0|} + \frac{1}{4\pi} \log\left[\frac{2}{1-\bx\cdot\bx_0 + |\bx||\bx'-\bx_0|}\right] + \frac{1}{8\pi} \left( |\bx|^2 + |\bx_0|^2\right) - \frac{7}{10\pi}.
\end{align}
\esub
\noindent\underline{Interior surface:} For $\bx_0\in\partial\Omega$ and $\bx\in\Omega$, we have that \cite[Lemma 2.1]{cheviakov2010asymptotic} 
\bsub
\begin{equation}\label{eq:surface_greens_int}
G^i(\bx;\bx_0) = \frac{1}{2\pi|\bx-\bx_0|} + \frac{1}{4\pi} \log \left(\frac{2}{1- \bx\cdot \bx_0 + |\bx-\bx_0| }\right)  + \frac{1}{8\pi}\big( |\bx|^2 + 1\big) -\frac{7}{10\pi}.
\end{equation}
In the limit as $\bx\to\bx_0$, the regular part  can be identified to be
\begin{equation}\label{eq:surface_Reg_int}
R^i(\bx_0;\bx_0) = \frac{1}{4\pi} \log 2 - \frac{9}{20\pi}.
\end{equation}
\esub
\noindent\underline{Exterior surface:} For $\bx_0\in\partial\Omega$ and $\bx\in\mathbb{R}^3\setminus\Omega$, we have that (cf.~\cite{SurfaceGreen3D})
\bsub\label{eq:surface_greens_ext}
\begin{equation}\label{eq:surface_greens_ext_a}
    G^e(\bx;\bx_0) =  \frac{1}{2\pi|\bx-\bx_0|} - \frac{1}{4\pi} \log \left(\frac{1- \bx\cdot \bx_0 + |\bx-\bx_0|}{ |\bx| - \bx\cdot\bx_0 }\right).
\end{equation}
In the limit as $\bx\to\bx_0$, we identify from \eqref{eq:surface_greens_ext_a} that the exterior regular part is
\begin{equation}\label{eq:surface_Reg_ext_b}
    R^e(\bx_0;\bx_0) = -\frac{1}{4\pi} \log 2.
\end{equation}
\esub
In Fig.~\ref{eq:sphere_error} we display relative errors between these known exact solutions and our numerical method. In Fig.~\ref{eq:sphere_error_a} we plot the interior solution $R^i(\bx;\bx_0)$ for $\bx_0 = (-\frac14,\frac13,-\frac15)$ where $\bx$ are points lying on a plane through the origin. The relative error is approximately $10^{-12}$. In Fig.~\ref{eq:sphere_error_b}, we plot the relative errors in the approximation of $R^i(\bx_0;\bx_0)$ and $R^e(\bx_0;\bx_0)$, given by \eqref{eq:surface_Reg_int} and \eqref{eq:surface_Reg_ext_b} respectively, along an arc of points $\bx_0 = (\sqrt{1-\al{q}_0^2},0,\al{q}_0)$ for $\al{q}_0\in[-1,1]$. The numerical method provides an approximation of these values to a relative error of around $10^{-9}$.

\begin{figure}[htbp]
\centering
\subfigure[$R^i(\bx;\bx_0)$ for $\bx_0\in\Omega$.]{\includegraphics[width = 0.425\textwidth]{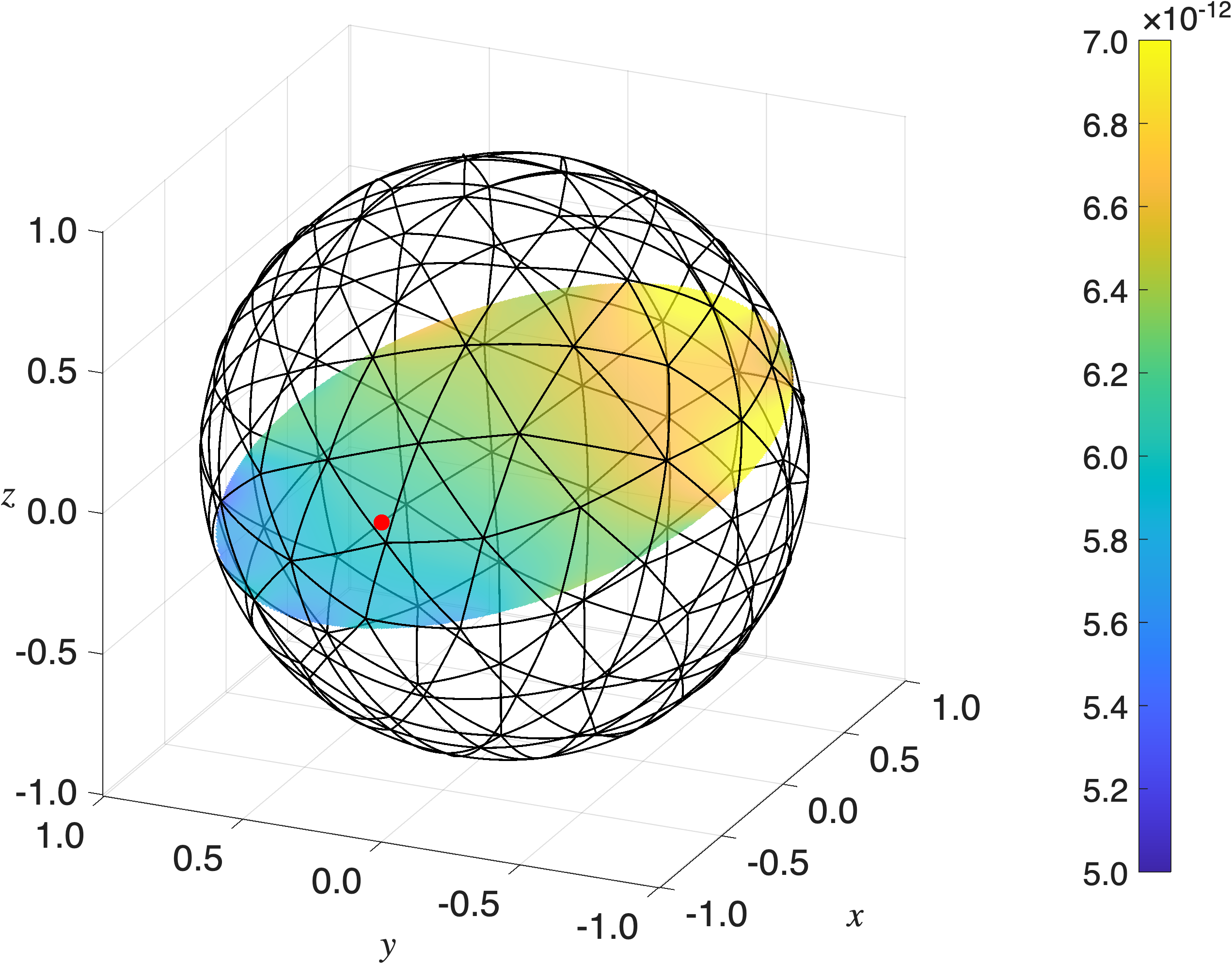}\label{eq:sphere_error_a}}\qquad
\subfigure[Relative errors in $R^e(\bx_0;\bx_0)$ and $R^i(\bx_0;\bx_0)$ for $\bx_0\in\partial\Omega$.]{\includegraphics[width = 0.475\textwidth]{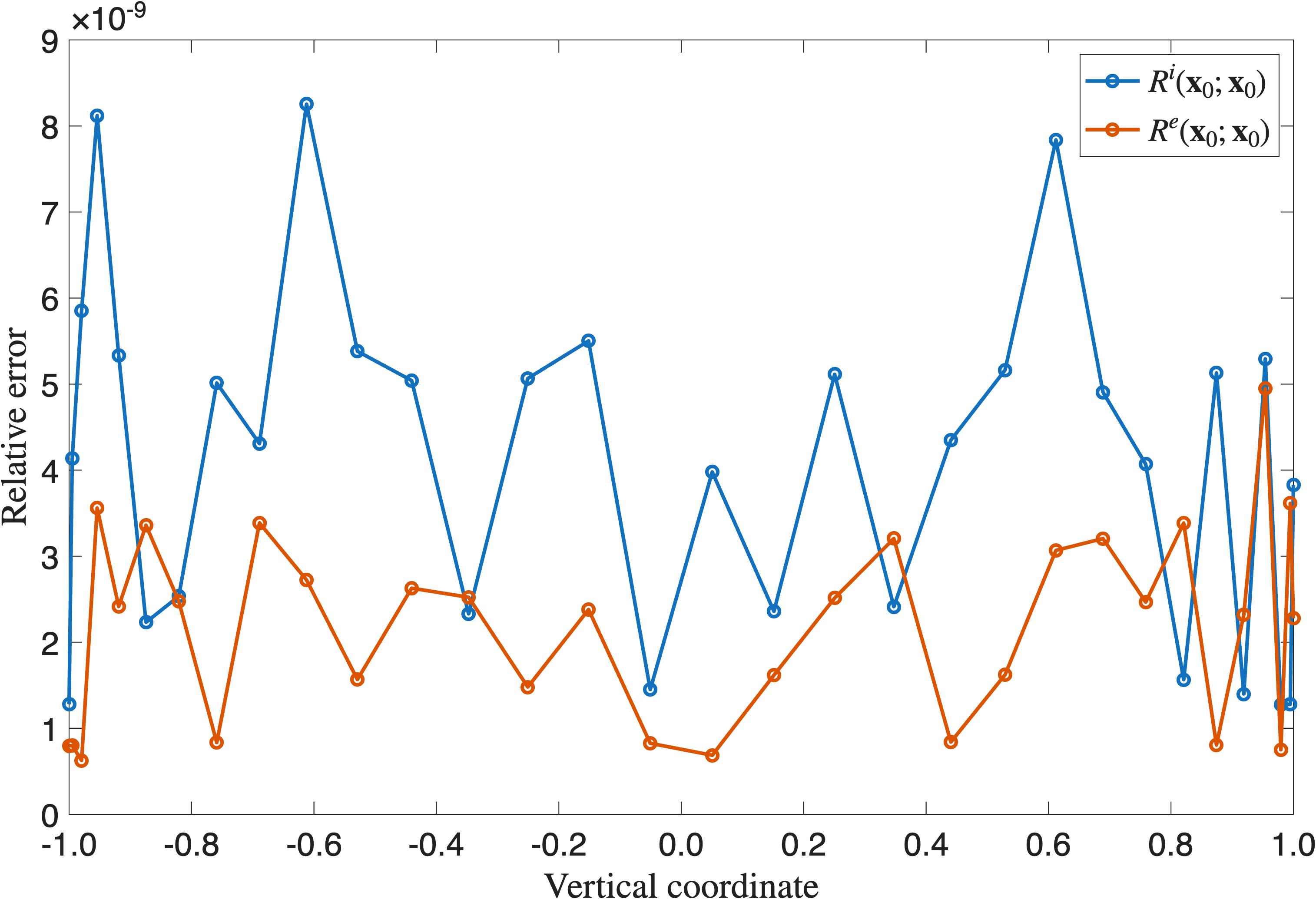}\label{eq:sphere_error_b}}

\caption{Error in numerical approximation in the sphere case. Left: Relative error in calculation of $R^i(\bx;\bx_0)$ for $\bx_0\in\Omega$ (red dot) and evaluated at points $\bx$ on an interior plane through the origin. The relative error is around $5\times10^{-12}$. Right: Relative errors for numerical evaluation of $ R^e{(\bx_0;\bx_0)}=-\frac{1}{4\pi}\log2$ and $R^i{(\bx_0;\bx_0)}=\frac{1}{4\pi}\log2 - \frac{9}{20\pi}$ for points $\bx
_0= (\sqrt{1-\al{q}_0^2},0,\al{q}_0)$ and $\al{q}_0 \in [-1,1].$ The agreement is around $10^{-9}$. \label{eq:sphere_error}}
\end{figure}

\subsection{A family of axi-symmetric solutions for the exterior Green's function}

In this section, we construct a family of domains and solutions for the exterior Green's function \eqref{eq:eqnG_exterior} and compare with our numerical method. Our strategy will be to first specify a harmonic function $G^e(\bx;\bx_0)$ with the correct singularity structure at the source and then determine a domain $\Omega$ that satisfies the Neumann boundary condition \eqref{eq:eqnG_exterior_a}. Throughout the construction, the surface source is located at $\vx_0=(0,0,1)\in\partial\Omega$ which will generate an axi-symmetric solution.
 
We will work in axi-symmetric spherical coordinates $(d, \theta)$ centered at the origin for which
\begin{equation}\label{eq:spherical_coords}
d= \sqrt{r^2+z^2}  \ ,  \qquad \theta = \tan^{-1} \left [\frac{r}{z} \right] \ ,
\end{equation}
where $r=\sqrt{x^2+y^2}$ is the distance to the polar axis. 
We define an orthonormal basis in the $(r,z)$ plane, 
$$\hat{\bd} = ( 
 \sin \theta ,
 \cos \theta) \ , \qquad 
 \hat{\bs{\theta}} = ( \cos \theta ,
 - \sin \theta ) \ , \quad 
$$
corresponding to radial and polar vectors
and note that $\frac{\partial \hat{\bd}}{\partial \theta} = \hat{\bs{\theta}}$. 
We also define cylindrical coordinates $(r,\al{q})$ centered at the source  $\vx_0= (0,0,1)$ for convenience,
$$r= \sqrt{x^2+y^2} = d \sin \theta \ , \quad
\al{q}=z-1 \ , $$
for which the distance to the source is given by
$$\rho = \sqrt{r^2+(z-1)^2} = \sqrt{d^2 - 2d \cos \theta +1 } \ 
.$$
Motivated by the exact solution for the sphere case in \al{Section} \ref{eq:sphere_case}, we propose axi-symmetric solutions for $G^e(\bx;\bx_0)$ and seek to find the domain 
$$\Omega = \{ \vx \ | \ d \le P(\theta) \quad \textrm{for} \quad 0 \le \theta \le \pi \}, $$
where $P(\theta)$ is to be determined. The conditions
\[
P(0)=1, \qquad P'(0)=P'(\pi)=0,
\]
place the source at the north pole of the surface and preclude a cusp at the north or south pole.
The ({un-normalized}) tangent vector $\mathbf{t}(P)$ and  normal vector $\mathbf{n}(P)$ to this curve are 
$$\mathbf{t}(P) = P'(\theta) \hat{\bd} +P(\theta) \hat{\bs\theta}, \qquad \mathbf{n}(P) = P(\theta) \hat {\bd} -P'(\theta) \hat{\bs\theta} \ .$$
For a known Green's function $G^e(d,\theta)$, the boundary condition $\partial_n G^e = 0$ then becomes
\begin{equation}\label{eqn:normalAxi}
    \partial_n G^e = \big(\al{\partial_d} G^e \, \hat{\bd} + \frac {1}{d} \al{\partial_{\theta}}G^e\, \hat{\bs\theta} \big)\cdot \big(P(\theta) \, \hat {\bd} -P'(\theta) \,\hat{\bs\theta}\big) = 0, \qquad 0\leq\theta\leq\pi.
\end{equation}
If $G^e(d,\theta)$ is known, then \eqref{eqn:normalAxi} yields the following ODE for $P(\theta)$,
\begin{equation}
    P'(\theta) =  P^2 \frac{\al{\partial_d}G^e(P,\theta)}{\al{\partial_{\theta}}G^e(P,\theta) } \ .
    \label{Pode}
\end{equation}
 At the source the surface has a local parameterization $\al{q}=\frac{\mathcal{H}}{2} r^2 +\bigoh(r^4) $ reflecting a smooth boundary with the prescribed mean curvature. As $\al{q} =z-1 =P(\theta) \cos \theta -1$ and $r = P(\theta) \sin \theta $, substituting and expanding yields
\begin{equation}
    P(\theta) \sim 1 + \frac{1+\Hc}{2}\theta^2 + \bigoh ( \theta^4),
    \label{BCnorth}
\end{equation} 
At the south pole ($\theta=\pi$) we want the surface to be smooth which implies 
$\lim_{\theta\to\pi} P'(\theta) = 0$. Moreover, the smoothness and axi-symmetry of the Green's function implies $\partial_{\theta}G^e(P,\pi)=0$.  From \eqref{Pode} we deduce that  
$$
\partial_d G^e(P,\pi) =0 \ ,
$$
In practice, we find some $d_*$ such that $\partial_{d}G(d_*,\pi)=0$ 
which determines the location of the south pole
and then enforce
$$\lim_{\theta \uparrow \pi} P(\theta) =d_*.$$

\subsubsection{The proposed Green's function}
Motivated by the spherical solution, we look for a surface with a specified $G^e$ of the form 
\begin{equation}
G^e(\vx;\bx_0) =G^+(d, \theta) = \frac{1}{2 \pi \rho} -
\frac{\mathcal{H}}{4 \pi } \log [ \rho+\al{q}]
- \frac{1}{4 \pi d} \left [1 +\mathcal{H}\right ] +\frac{\mathcal{H}}{4 \pi} \log [d+z] \ .
\end{equation}
However, this form is singular on the polar axis for $z \le 1$ where $\rho+\al{q}=0$. By noticing that $r^2=\rho^2-\al{q}^2=d^2-z^2$,
one can see that
$$\frac{\rho-\al{q}}{d-z} = \frac{d+z}{\rho+\al{q}},$$
which allows us to derive an alternative form of $G^e(\vx;\bx_0)$
\begin{equation}
G^e(\vx;\bx_0) =G^-(d, \theta) = \frac{1}{2 \pi \rho} +
\frac{\mathcal{H}}{4 \pi } \log [ \rho-\al{q}]
- \frac{1}{4 \pi d} \left [1 +\mathcal{H}\right ] -\frac{\mathcal{H}}{4 \pi} \log [d-z] \ ,
\end{equation}
which is valid everywhere except on the polar axis for $z \ge 0$.
Patching these two solutions together yields
\begin{equation}\label{eqn:GreenFull}
    G^e(\vx;\bx_0) = 
    \begin{cases}
    G^+(d,\theta), &  0 \le \theta < \pi/2; \\[4pt]
    G^-(d,\theta), & \pi/2 \le \theta \le \pi.
    \end{cases}
\end{equation}
which is $C^\infty $ everywhere except for along the polar axis (where $r=0$) when $0\le z \le 1$. Along this line segment there is a singularity in the log terms and there is a source/sink at each endpoint of the line segment. These singularities are in the interior of $\Omega$ except for the expected singularity at $\vx=\vx_0$. Due to axisymmetry, we can write
$$z = d \cos \theta \ , \qquad
\al{q}= d \cos \theta-1 \ , \qquad 
\rho = \sqrt{d^2 - 2d \cos \theta +1 } \ .$$
In the far-field 
$$G^e(\vx;\bx_0) \sim
\frac{1}{4 \pi d} +\frac{4+\mathcal{H}}{8 \pi} \,
\frac{\cos \theta}{ d^2} + \bigoh(1/d^3) \ ,
$$
as $d \to \infty$ 
satisfying the far-field condition \eqref{eq:eqnG_exterior_c}. At the source $d=z=1$, 
$$
G^e(\vx;\bx_0) =  \frac{1}{2 \pi \rho} -
\frac{\mathcal{H}}{4 \pi } \log [ \rho+\al{q}]
- \frac{1}{4 \pi } \left [1 +\mathcal{H}\right ] +\frac{\mathcal{H}}{4 \pi} \log 2 + \bigoh(\rho) \ .
$$
which allows one to identify the regular part of the Green's function. We identify that the first two terms are the singular portion at the source,
$$G^e_{\text{sing}}(\vx;\vx_0) =  \frac{1}{2 \pi \rho} -
\frac{\mathcal{H}}{4 \pi } \log [ \rho+\al{q}], $$
and define $G^e(\vx;\bx_0)\equiv G^e_{\text{sing}}(\vx;\vx_0)+R^e(\vx;\vx_0)$,
then the last two terms yield the regular part,
\begin{equation}\label{eq:RegFull}
    R^e(\vx_0;\vx_0) = - \frac{1}{4 \pi } \left [1 +\mathcal{H}\right ] +\frac{\mathcal{H}}{4 \pi} \log 2 \ .
\end{equation}
On the positive polar axis $G^e(d,0)=G_+(d,0)$ is singular for $0\le d \le 1$. For $d>1$
$$G^e(d,0)=G_+(d,0) = -\frac{\mathcal{H}}{4\pi} \left [ \log(1-1/d) 
+\frac{1}{d}
\right ] + \frac{d+1}{4 \pi d(d-1)},\qquad d>1,$$
 and as $d\downarrow1$,  
$$\al{\partial_d}G^e(d,0) \sim -\frac 1 {2 \pi} (d-1)^{-2} \ .$$ 
which is negative for all $\mathcal{H}$. 
At $\theta=\pi$, 
$$G^e(d,\pi)=G_-(d,\pi) = \frac{\mathcal{\Hc}}{4\pi} \left [ \log(1+1/d) 
-\frac{1}{d}
\right ] + \frac{d-1}{4 \pi d(d+1)},\qquad d \ge 0,$$
and
$$\al{\partial_{d}}G^e(d,\pi) = \frac{(\Hc+1)+(\Hc+2)d-d^2}{4 \pi d^2(d+1)}. $$
The south pole of $\partial \Omega$ must be at a value $d_*$ where the normal derivative vanishes, that is where $\al{\partial_{d}}G^e(d_*,\pi)=0$. This yields 
\begin{equation}\label{eqn:dstar}
    d_* = \frac{(\Hc+2) \pm \sqrt{\Hc^2+8\Hc+8} } {2} \ ,
\end{equation}
which has two real roots for $\Hc>\Hc_* \equiv -4 + 2 \sqrt{2} \approx -1.17157$, a double root at this value and no solutions for smaller values of $\Hc$.

\subsubsection{An implicit solution for the bounding domain}
In this section we will use the fact that $G^e(d,\theta)$ is harmonic to derive an exact implicit solution of the ODE system \eqref{Pode} for $P(\theta)$.  First note that for a harmonic function $\Delta G^e=0$ (except for $0 \le d \le 1$ on the north polar axis) that
$$\Delta  G^e \equiv \frac{1}{d^2} \frac {\partial}{\partial d } \Big(d^2 \frac{\partial }{\partial d} G^e\Big)  +  
\frac{1}{d^2 \sin \theta}  \frac {\partial}{\partial \theta } \Big(\sin \theta \frac{\partial}{\partial\theta} \,G^ e\Big) =0 .
$$
This can be rewritten as 
$$ \frac {\partial}{\partial d } \Big(d^2 \sin \theta \, \frac{\partial }{\partial d}G^e\Big) +  \frac {\partial}{\partial \theta } \Big(\sin \theta \,\frac{\partial }{\partial \theta}G^e\Big) =0 \ . $$
Now, rewriting the equation \eqref{Pode} for $P(\theta)$, we have that
$$  \sin\theta \, \frac{\partial}{\partial {\theta}}G^e(P,\theta) \, \textrm{d}P - P^2 \sin\theta\, \frac{\partial}{\partial d} G^e(P,\theta) \, \textrm{d} \theta  =0. $$
The differential equation  $M(P,\theta) \, \textrm{d}P +N(P,\theta) \, \textrm{d} \theta =0$ is exact if
$\al{\partial_{\theta} M = \partial_PN}$ which is exactly the condition derived above. Integrating the equation yields the implicit solution
$$ \frac{ \Hc }{4 \pi} \rho  
+\frac{1- P \cos \theta}{2 \pi \rho}
- \Hc \frac{P}{4 \pi} + (1+\Hc) \frac{\cos \theta}{4 \pi} = C .$$
where
$$\rho = \sqrt{P^2 - 2P \cos \theta +1}  = \sqrt{(1-P\cos \theta)^2 +(P \sin \theta)^2}, $$
is the distance to the source at the north pole.  If we approach the north pole  (i.e.~ $\rho \to 0$) with a behavior prescribed by \eqref{BCnorth}  (that is $P \sim \frac{1+\Hc}{2} \theta^2$ as $\theta$ decreases to zero) we find that $C=\frac{1}{4 \pi}$. The implicit definition of the surface can now be written as 
\begin{equation}\label{eqn:Implicit}
    F(P,\theta) =
\Hc \rho +2 \frac{1- P \cos \theta}{\rho}
- \Hc {P} + (1+\Hc) {\cos \theta} -1 =0 \ .
\end{equation}
To find the south pole, expand $F(d_*,\pi-\eps)=0$ as $\eps \to 0$,
$$ F(d_*,\pi-\eps) \sim 
\frac{(\Hc+1)+(\Hc+2) d_* -d_*^2 }{2(d_*+1)} \eps^2 + \bigoh(\eps^4)  \ .$$
This yields the same condition on $d_*$ stated in \eqref{eqn:dstar}.

We now summarize our process for generating a solution $G^e(\bx;\bx_0)$ for $\bx_0= (0,0,1)$ of \eqref{eq:eqnG_exterior}. First we select a mean curvature $\Hc>\Hc_{*} \equiv-4+ 2\sqrt{2}$ and calculate the value $d_*$ from \eqref{eqn:dstar}. The exact Green's function $G^e(\bx;\bx_0)$ is given by \eqref{eqn:GreenFull} and its regular part $R^e(\bx_0;\bx_0)$ by \eqref{eq:RegFull}.
    For each $\theta\in[0,\pi]$, we solve the implicit equation $F(P,\theta) =0$ given in \eqref{eqn:Implicit} for $P(\theta)$. The surface is then reconstructed from \eqref{eq:spherical_coords} for $d = P(\theta)$.

In Fig.~\ref{fig:constructed} we demonstrate our process by constructing domains for a range of $\Hc$ values. The solution of the implicit equation \eqref{eqn:Implicit} is shown in Fig.~\ref{fig:constructed_b} which describes a family of \lq\lq tear-drop\rq\rq\ shaped domains. For each case, we use the constructed domain in our boundary integral method and evaluate the regular part $R^e(\bx_0;\bx_0)$. In Fig.~\ref{fig:constructed_a} we show the constructed domain and a favorable comparison with the known value \eqref{eq:RegFull}.

We remark that the choice of source placement $\bx_0= (0,0,1)$ renders the solution axi-symmetric and hence $\Dc(\bx_0)=0$. Therefore the weaker singularity is not present in this example. This method can potentially be expanded to generate a larger class of domains by constructing more complex Green's functions from additional point sources. Despite some effort, we were not able to expand this methodology to generate domains without an axi-symmetric profile.

\begin{figure}[htbp]
    \centering
    \subfigure[The constructed domains and $R^e(\bx_0;\bx_0)$.]{\includegraphics[width = 0.45\textwidth]{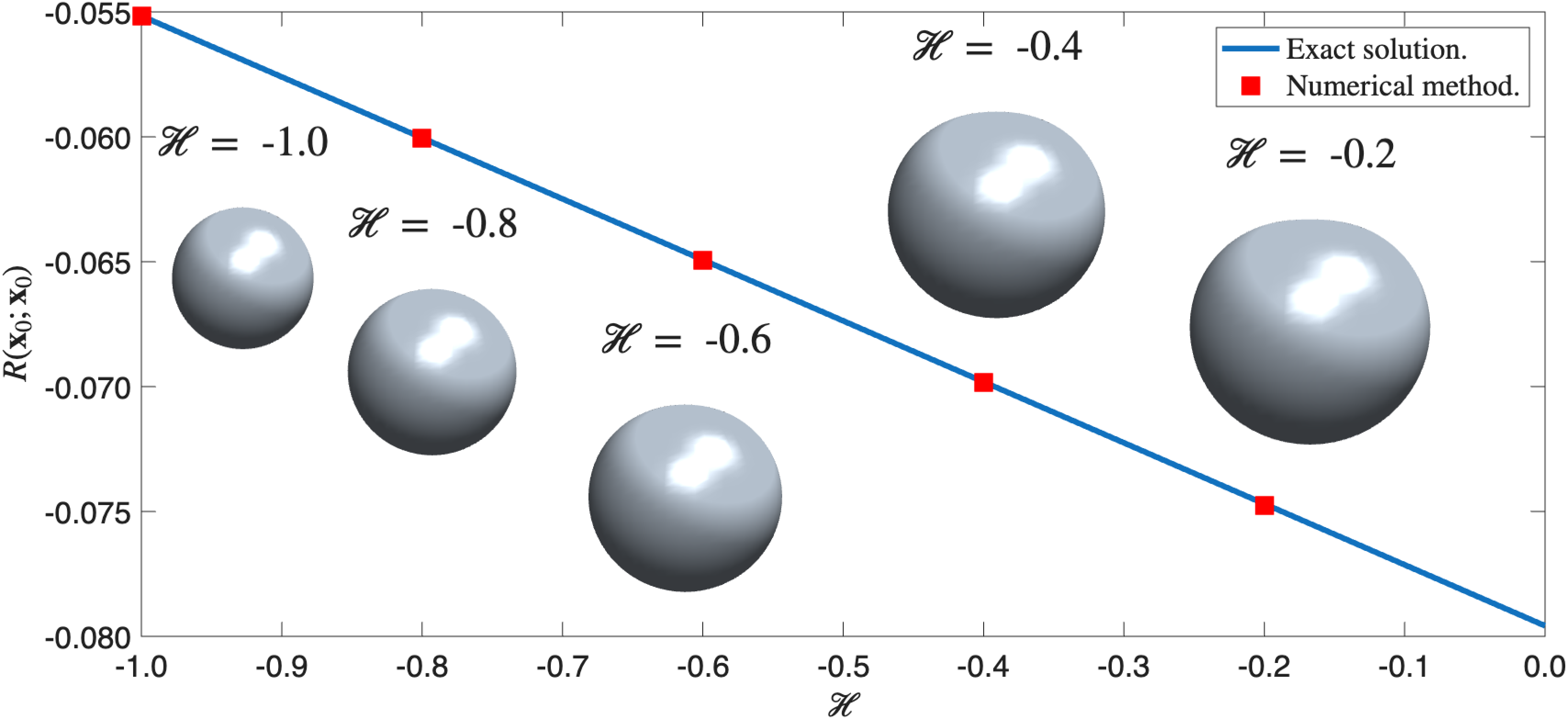}\label{fig:constructed_a}}\qquad
    \subfigure[The function $P(\theta)$.]{\includegraphics[width = 0.44\textwidth]{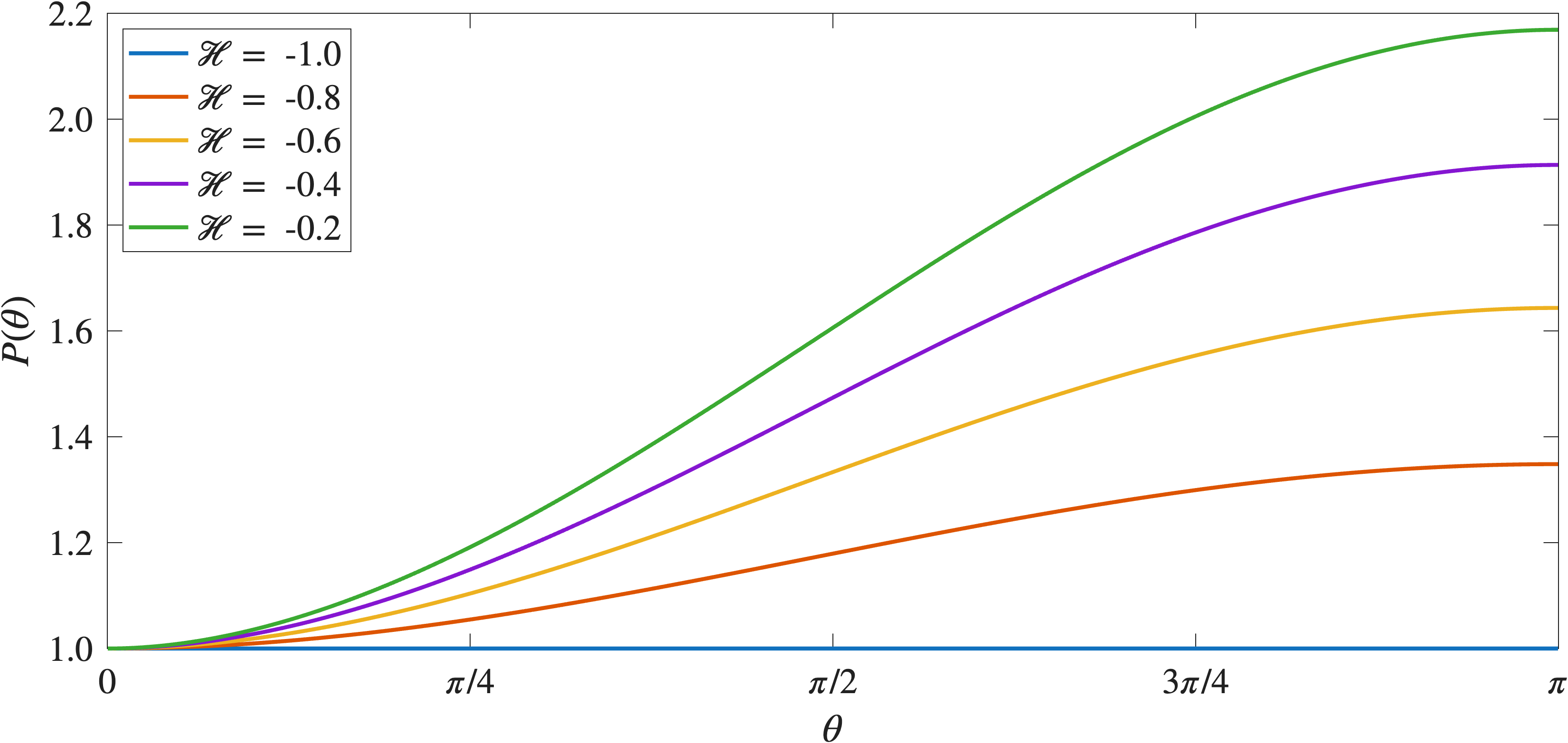}\label{fig:constructed_b}}
    \caption{Comparison of the constructed solutions with the numerical method. For values $\Hc\in\{-1.0,-0.8,-0.6,-0.4,-0.2\}$ we construct the domain $P(\theta)$ from numerical solution of \eqref{Pode}. The exact regular part $R^e(\bx_0;\bx_0)$ from \eqref{eq:RegFull} agrees closely with our numerical method (red squares). \label{fig:constructed}}
\end{figure}

\subsection{Series solution in prolate spheroid harmonics}

In this section, we develop a solution to the exterior surface Green's function \eqref{eq:eqnG_exterior} for the prolate spherical geometry defined by
\begin{equation}
\partial\Omega = \Big\{ (x,y,z)\in\mathbb{R}^3 \ \Big| \ \frac{x^2 + y^2}{a^2} + \frac{z^2}{b^2} = 1 \ \Big\} \, ,
\end{equation}
where $b>a>0$. The solution of \eqref{eq:eqnG_exterior} is separable in the prolate spheroid coordinate system
\begin{equation}\label{eqn:Prolate_coords}
    x = f \sqrt{(\xi^2-1) (1 - \al{q}^2)} \cos\phi, \qquad
    y = f \sqrt{(\xi^2-1) (1 - \al{q}^2)} \sin\phi, \qquad
    z = f \, \xi \, \al{q}, 
\end{equation}
where $f = \sqrt{b^2-a^2}$ is half the interfocal distance. Here $\xi\in[1,\infty)$ is the radial variable, $\al{q} \in[-1,1]$ is the elevation coordinate and $\phi \in [0,2\pi)$ is the azimuthal variable. The surface of the prolate spheroid is defined by the surface $\xi_0 = b/f$ and $\xi = 1$ is the line connecting the foci. The task is to solve for the Green's function and then peel off the triple singularity structure \eqref{GSing_surf} to identify the regular part $R^e(\bx_0;\bx_0)$. We begin by recalling that the harmonic functions in prolate coordinates are
\[
\{Q_n^m(\xi),P_n^m(\xi) \}\times \{P_n^m(\al{q})\cos m \phi,P_n^m(\al{q})\sin m\phi \},
\]
for $n=0,1,\ldots $ and $m = 0,1,\ldots,n$. Here $P_n^m$ and $Q_n^m$ are the associated Legendre functions of the first and second kind respectively \cite{Prolate1,prolate2}. The functions $P_n^m(\xi)$ are unbounded as $\xi\to\infty$ while $Q_n^m(\xi)$ are bounded as $\xi\to\infty$. When restricted to $\xi= \xi_0$, the surface prolate spherical harmonics are the functions
\[
C_n^m(\al{q},\phi) = P_n^m(\al{q}) \cos m \phi, \qquad S_n^m(\al{q},\phi) = P_n^m(\al{q}) \sin m \phi.
\]
 To establish the orthogonality structure, we introduce the weighted surface inner product
\begin{equation}\label{eqn:orthogo}
\braket{u,v}_w = \int_{\partial\Omega} u(\al{q},\phi)v(\al{q},\phi)\, w\, \mathrm{d}A,  \qquad w = \frac{1}{f^2\sqrt{(\xi^2-\al{q}^2)(\xi^2-1)}},
\end{equation}
for $\mathrm{d}A = w^{-1}\mathrm{d}\al{q} \,  \mathrm{d}\phi$. The orthogonality relationships for the surface prolate harmonics are 
\bsub\label{eqn:ortho_main}
\begin{equation}
\braket{S_n^m,S_N^M}_w = \gamma_{m,n} \delta_{n,N} \delta_{m,M}, \qquad \braket{S_n^m,C_N^M}_w = 0, \qquad \braket{C_n^m,C_N^M}_w = \gamma_{m,n} \delta_{n,N} \delta_{m,M},
\end{equation}
where the normalization constants are given by
\begin{equation}\label{eqn:gamma_mn}
\gamma_{m,n} = \frac{2\pi(n+m)!}{(2n+1)(n-m)!} \times\left\{ \begin{array}{rl} 2, & m=0; \\[4pt]1, & m\neq0.\end{array} \right.
\end{equation}
\esub

For the exterior problem requiring a finite solution as $\xi\to\infty$, the general solution of Laplace's equation with a source at coordinates $(\xi_0,\al{q}_0,\phi_0)$, is
\begin{equation}\label{eqn:G_surf_full}
G^e(\bx;\bx_0) = \sum_{n=0}^{\infty}\sum_{m=0}^{n} B_{m,n} Q_n^m(\xi) P_n^m(\al{q})\cos m(\phi-\phi_0),
\end{equation}
with constants $B_{m,n}$. 

\al{In the prolate coordinate system, the Dirac source has the representation
\begin{equation}
    \delta(\bx-\bx_0) = \frac{1}{|J|}\delta(q-q_0)\delta(\phi-\phi_0) = w(\xi_0,q)\delta(q-q_0)\delta(\phi-\phi_0),
\end{equation}
where  $|J|$ is the determinant of the coordinate transformation given in \eqref{eqn:Prolate_coords} which can be written in terms of
the scale factor $w$ is given in \eqref{eqn:orthogo}. To  apply the boundary condition, we note that the normal derivative on $\xi = \xi_0$ transforms to
\begin{equation}
\partial_nG^e = \frac{a^2}{f} w(\xi_0,\al{q}) \partial_{\xi} \, G^e  
\end{equation}
so the Neumann condition, $\partial_nG^e = -\delta(\bx-\bx_0) $ becomes
\begin{equation}
\frac{a^2}{f} w(\xi_0, q) \partial_{\xi} \, G^e 
= -  w(\xi_0,q)\delta(q-q_0)\delta(\phi-\phi_0),
\end{equation}
applied on the surface where $\xi=\xi_0$.
Substituting the expansion \eqref{eqn:G_surf_full} and
applying the orthogonality relationships  \eqref{eqn:ortho_main} yields
\[
B_{m,n} = - \frac{f}{a^2}\frac{P_n^{m}(\al{q}_0)}{\gamma_{m,n} Q_n^{m'}(\xi_0)} = \frac{1}{f(1-\xi_0^2)} \frac{P_n^{m}(\al{q}_0)}{\gamma_{m,n} Q_n^{m'}(\xi_0)}.
\]
}
\al{
Substituting the constants $B_{m,n}$ into the general solution \eqref{eqn:G_surf_full}, we obtain the series solution
\begin{equation}\label{eq:mainGreensExp}
    G^e(\bx;\bx_0) = \frac{1}{f(1-\xi_0^2)}\sum_{n=0}^{\infty}\sum_{m=0}^{n} \frac{P_n^m(\al{q}_0)}{\gamma_{m,n} Q_n^{m'}(\xi_0)} Q_n^m(\xi) P_n^m(\al{q})\cos m(\phi-\phi_0).
\end{equation}}
We now turn to the issue of extracting the singularity structure by considering the solution of $G^e(\bx;\bx_0)$. \al{As is common with spectral expansions of Dirac sources, the convergence properties of \eqref{eq:mainGreensExp} are terrible for any $\bx\in\partial\Omega$ and deteriorate further as $\bx$ approaches $\bx_0$. To ameliorate these issues, we evaluate \eqref{eq:mainGreensExp} off the surface on a ring of points $r_c = |\bx-\bx_0|$ in the tangent plane at $\bx_0$ (see Fig.~\ref{fig:LocalSchematic})}. On this ring, we have that
\begin{equation}\label{eqn:GreensLocal}
    G^e(\bx;\bx_0) = \frac{1}{2\pi r_c} - \frac{\mathcal{H}(\bx_0)}{4\pi} \log r_c
    + \frac {\Dc(\bx_0)} {8\pi} \cos (2 \varphi) +
    R^e(\bx;\bx_0)+
    o(1), \quad \mbox{as} \quad \bx\to\bx_0,
\end{equation}
where $\varphi$ is an azimuthal measure in the tangent plane. To isolate $R^e(\bx_0;\bx_0)$, we calculate the average value of the function
\[
W(\bx;\bx_0) = G^e(\bx;\bx_0)-\frac{1}{2\pi r_c} + \frac{\Hc(\bx_0)}{4\pi} \log r_c,
\]
along a ring of points in the tangent plane at $\bx_0$. To improve the convergence properties of the series, the series representation of the leading monopole singularity is subtracted from the expansion. The relevant series for the monopole expansion \cite{Prolate1} around a source $\bx_0 = (\xi_0,\al{q}_0,\phi_0)$ is
\bsub
\begin{equation}\label{eq:key_identities_b} 
   \frac{1}{|\bx-\bx_0|} = \frac{1}{f} \sum_{n=0}^{\infty}\sum_{m=0}^n   H_{m,n} P_n^m(\al{q}_0)P_n^m(\xi_0)Q_n^m(\xi) P_n^m(\al{q})\cos m(\phi-\phi_0),
\end{equation}
where the constants $H_{m,n}$ are given by
\begin{equation}
    H_{m,n} = \eps_m (-1)^m(2n+1) \left[ \frac{(n-m)!}{(n+m)!}\right]^2, \qquad \eps_m = \left\{ \begin{array}{rl} 1, & m=0; \\ 2, & m>0.\end{array} \right.
\end{equation}
\esub
Adding and subtracting a monopole from the solution \eqref{eq:mainGreensExp} yields 
\begin{align*}
G^e(\bx;\bx_0) &= \frac{1}{2\pi|\bx-\bx_0|} - \frac{1}{2\pi|\bx-\bx_0|}+\sum_{n=0}^{\infty}\sum_{m=0}^{n} B_{m,n} Q_n^m(\xi) P_n^m(\al{q})\cos m(\phi-\phi_0)\\[5pt]
&= \frac{1}{2\pi|\bx-\bx_0|} + \sum_{n=0}^{\infty}\sum_{m=0}^{n} \left[B_{m,n} - \frac{H_{m,n} P_n^{m}(\al{q}_0)P_n^m(\xi_0)}{2\pi f} \right] Q_n^m(\xi) P_n^m(\al{q})\cos m(\phi-\phi_0),
\end{align*}
The series for the function $W(\bx;\bx_0)$ is now given by
\begin{equation}\label{eqn:seriesS}
    W(\bx;\bx_0) = \frac{\Hc(\bx_0)}{4\pi}\log r_c +  \sum_{n=0}^{\infty}\sum_{m=0}^{n} \left[B_{m,n} - \frac{H_{m,n} P_n^{m}(\al{q}_0)P_n^m(\xi_0)}{2\pi f} \right] Q_n^m(\xi) P_n^m(\al{q})\cos m(\phi-\phi_0).
\end{equation}
We evaluate $W(\bx;\bx_0)$ on $N_{\text{ring}}=24$ equally spaced points $\{\bx_i\}_{i=1}^{N_{\text{ring}}}$ on a ring of radius $r_c$ in the tangent plane. As $r_c\to0$, we recover the predicted biquadratic (quadratic in $r_c^2$) behavior of $\bar{W} = \frac{1}{N_{\text{ring}}}\sum_{i=1}^{N_{\text{ring}}}W(\bx_i;\bx_0)$ and use fitting to extrapolate the value of $R^e(\bx_0;\bx_0)$ - see Fig.~\ref{fig:RingLocal_a}. Along the ring $|\bx-\bx_0| = r_c$, we directly verify in Fig.~\ref{fig:RingLocal_b} the presence of the third weaker singularity predicted in \eqref{eqn:GreensLocal}. For the prolate spheroid, the mean and differences of the surface curvature at the point $\bx_0$ are given by
\[
\Hc(\bx_0) = \frac{\xi_0(1-2\xi_0^2 + \al{q}_0^2)}{2f(\xi_0^2-1)^{\frac12}(\xi_0^2 - \al{q}_0^2)^{\frac32}}, \qquad \Dc(\bx_0) = \frac{\xi_0 \big( 1 - \al{q}_0^2 \big)}{2f (\xi_0^2 - 1)^{\frac12} (\xi_0^2 - \al{q}_0^2)^{\frac32}}.
\]

\begin{figure}
    \centering
    \subfigure[$\bar{W}(\bx;\bx_0)$ as $r_c\to0$.]{\includegraphics[width=0.45\textwidth]{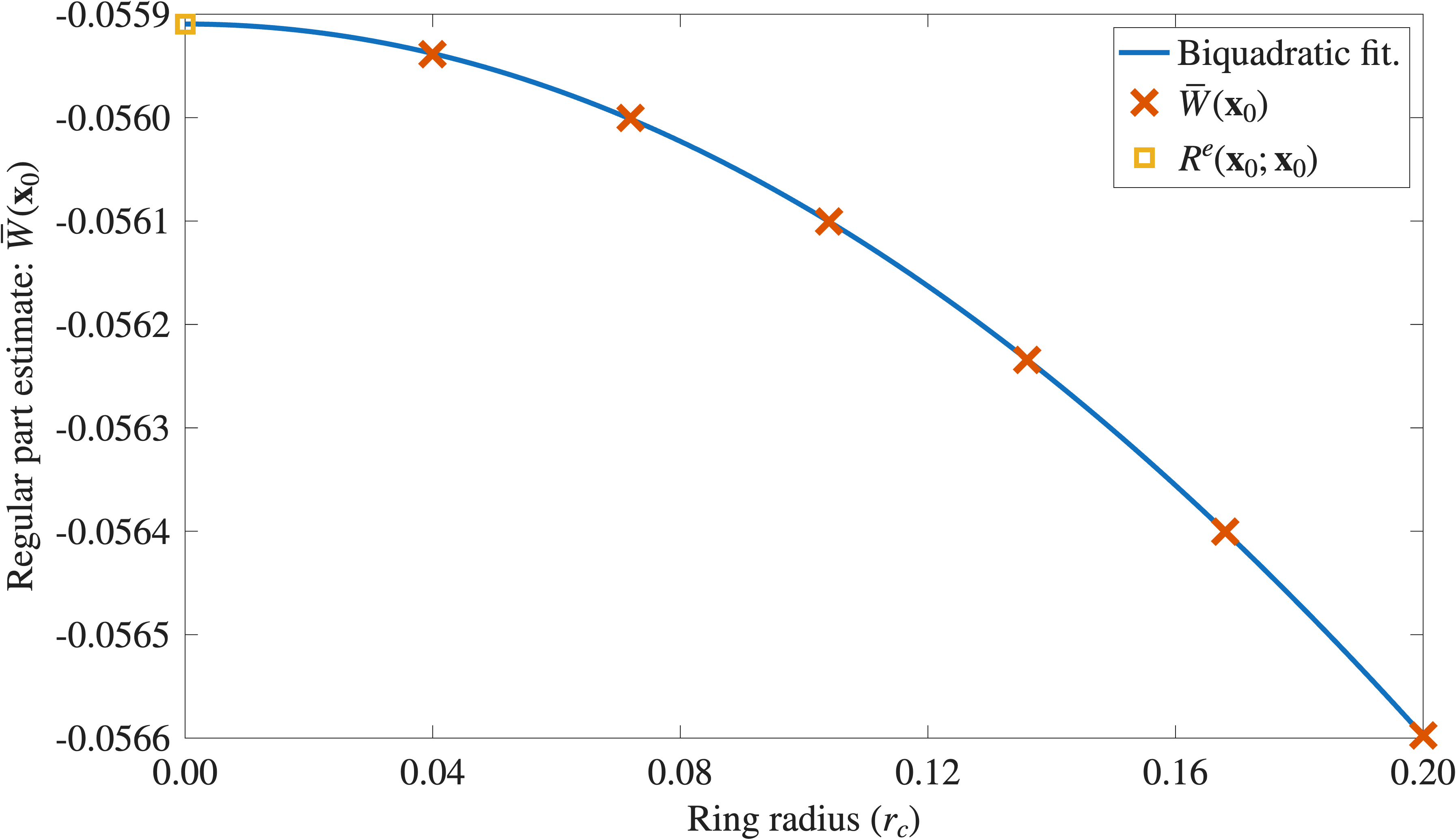}\label{fig:RingLocal_a}}\qquad
    \subfigure[$W(\bx;\bx_0) - \bar{W}$ on the ring $r_c = |\bx-\bx_0| =0.01$.]{\includegraphics[width=0.45\textwidth]{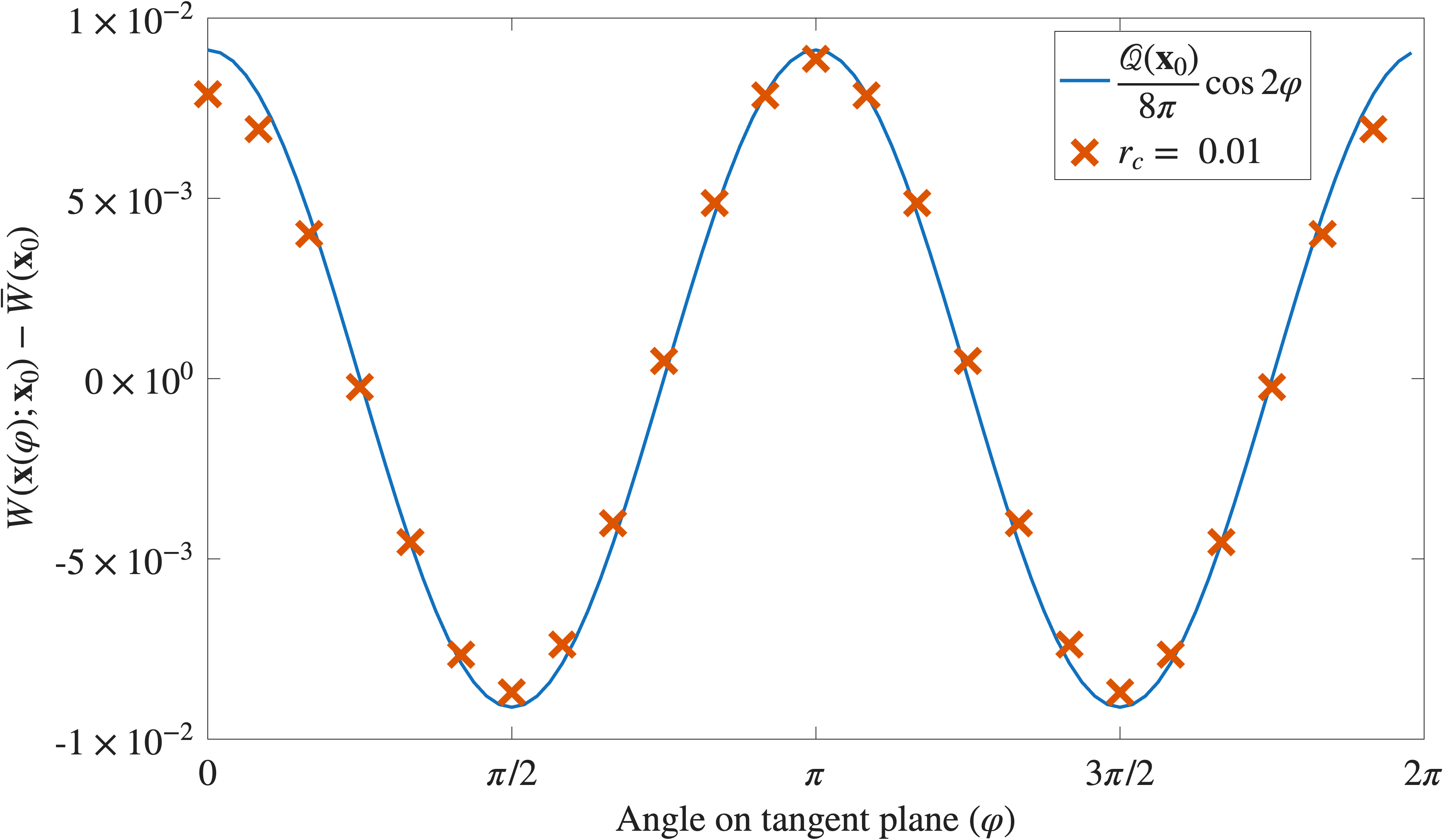}\label{fig:RingLocal_b}}
    \caption{Evaluation of the series $W(\bx;\bx_0)$ on the tangent plane at $\al{q}_0 = 0.7$ with $N = 2000$ terms. Left: the approximation of the average $\bar{W}$ as the ring radius $r_c$ shrinks to zero. We use a fitted curve (solid blue) to extrapolate the intercept as $R^e(\bx_0;\bx_0)$ (yellow square). Right: The quantity $W(\bx(\varphi);\bx_0) - \bar{W}$ at points (red squares) along a ring in the tangent plane centered at $\bx_0$ together with the predicted asymptotic behavior (solid blue) in \eqref{eqn:GreensLocal}. \label{fig:RingLocal}}
\end{figure}

In our numerical evaluation of the series \eqref{eqn:seriesS}, we calculate only ratios and logarithmic derivatives of the Legendre functions $P_n^m$ and $Q_n^m$ rather than the functions themselves. This avoids evaluating the factorial terms in the normalization factors $\gamma_{m,n}$ which leads to overflow at relatively moderate $n$ and $m$ values in standard double precision. Recursion relations adapted for this purpose were provided in \cite{prolate2} which we have implemented and provide reliable evaluation for thousands of terms. Our results are based on evaluations of $W(\bx;\bx_0)$ with truncation at $2000$ terms.

In Fig.~\ref{fig:ProlateError}, we display comparisons of estimates of $R^e(\bx_0;\bx_0)$ from the series solution approach and the boundary integral method. We take a prolate spheroid with parameters $a=1$, $ b= \sqrt{2}$ and a source point at $\bx_0 = (\xi_0,\al{q}_0,0)$ for a range of points $\al{q}_0 \in[-1,1]$ and $\xi_0 =b/\sqrt{b^2 - a^2} = \sqrt{2}$. We observe close agreement between the boundary integral method and the series solution. The relative error in the series approximation is around $10^{-4}$ which is limited by the number of terms in the truncation of the series and our extrapolation method. Consequently, the utility of this series solution is that it provides a non-trivial validation of both the numerical method and the derived singular behavior \eqref{eqn:GreensLocal} of the Green's function near the source. As we demonstrated in Fig.~\ref{eq:sphere_error}, our numerical method is capable of relative errors of $10^{-12}$ for the bulk problem and $10^{-9}$ for the surface problem.

\begin{figure}[htbp]
    \centering
    \subfigure[Agreement in $R^e(\bx_0;\bx_0)$.]{\includegraphics[width = 0.475\textwidth]{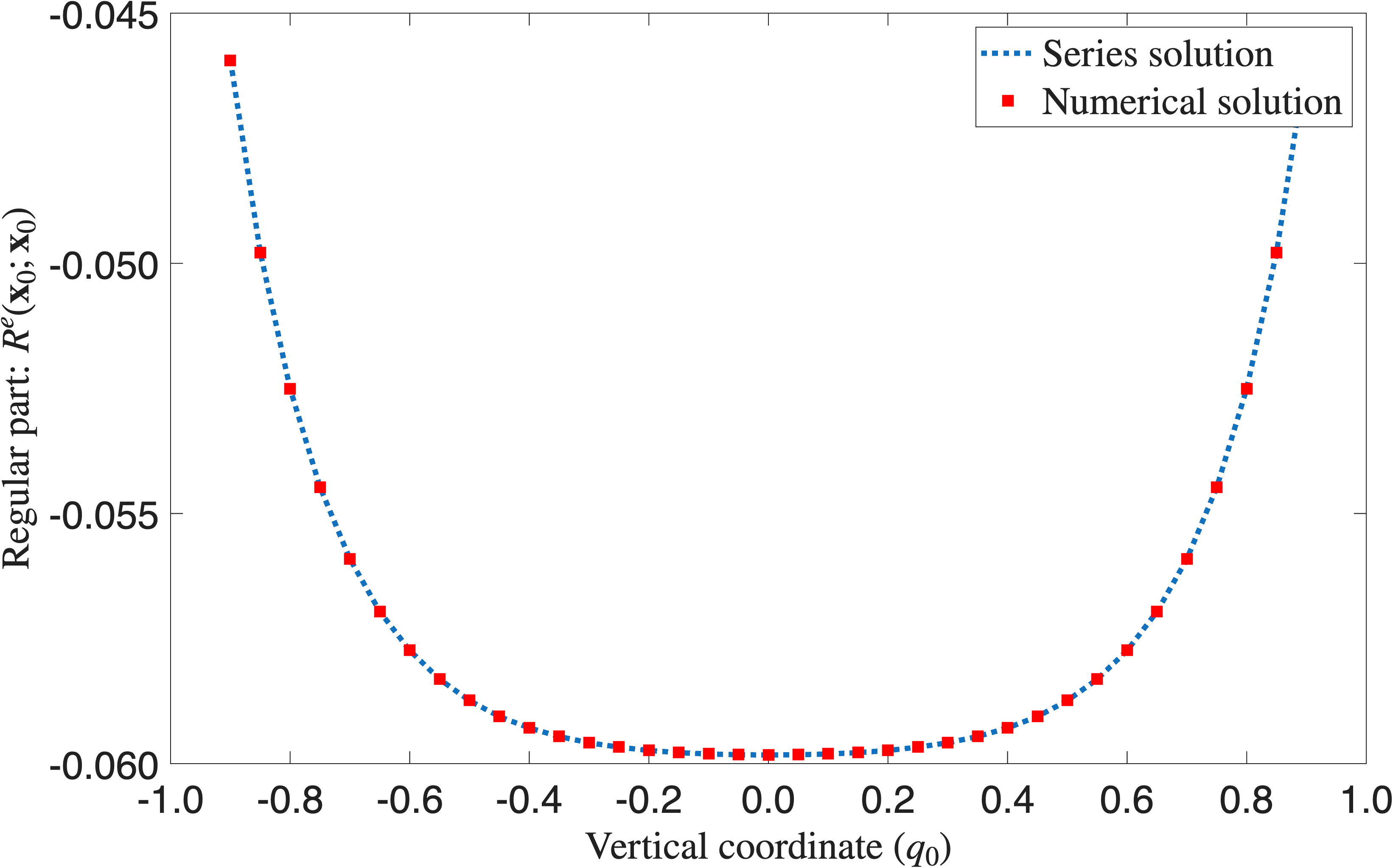}\label{fig:ProlateError_a}}\qquad
    \subfigure[Relative error in series approximation.]{\includegraphics[width = 0.45\textwidth]{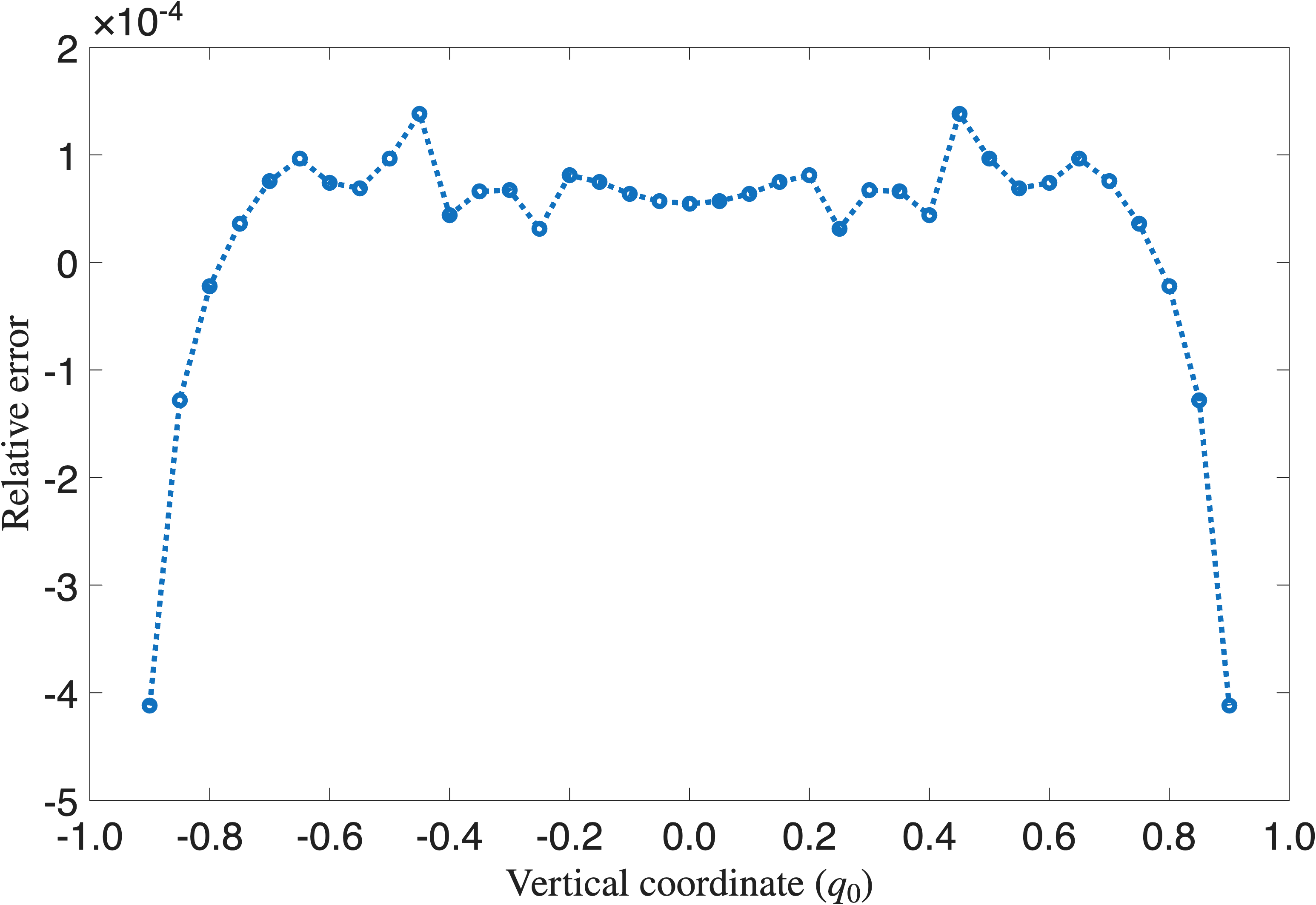}\label{fig:ProlateError_b}}
    \caption{Comparison between the regular part of the Green's function for the prolate spheroid case $a= 1$ and $b= \sqrt{2}$ for a range of source point in $\al{q}_0\in[-1,1]$. Left: A comparison of $R^e(\bx_0;\bx_0)$ from the series approximation (blue dotted) and our numerical solution (red squares). Right: The relative error in the series approximation is roughly $10^{-4}$. \al{As discussed in the text, this error is due to the slow convergence of the series solution and our extrapolation approach, not the boundary integral method.}  \label{fig:ProlateError}}
\end{figure}

{\color{black}
\subsection{Convergence study on the prolate spheroid domain}
In this subsection, we assess the convergence order of the method presented in Section~\ref{Sec:Numerics} for computing $R^i(\bx_0;\bx_0)$ and $R^e(\bx_0;\bx_0)$ in the ellipsoid geometry shown in Fig.~\ref{fig:ellipsoid_reg}. We choose a generic $\bx_0\in\partial\Omega$ and compute $R^{i,e}(\bx_0;\bx_0)$ using a sequence of meshes with typical diameter $h_0$ and of $6^{th},8^{th}$ and $10^{th}$ orders. We use the solution on a fine $10^{th}$ order mesh as the benchmark to calibrate the accuracy at coarser resolutions. Fig.~\ref{fig:convergence} shows that both values converge rapidly with faster convergence exhibited by the higher order meshes. In Fig.~\ref{fig:convergence}, we show convergence results with and without the inclusion of the Duffy patches near $\bx_0$ as described in Section~\ref{sec:rhs}. Crucially, we observe that the special Duffy treatment of the singularity in $\partial_n Q^\pm$ is essential to achieve high order convergence. Our results show that quantities $R^{i,e}(\bx_0;\bx_0)$ are approximated to accuracies of roughly $\bigoh(10^{-10})$.

\begin{figure}[htbp]
   \subfigure[$R^i(\bx_0;\bx_0)$ for $\bx_0\in\partial\Omega$.]{\includegraphics[width=0.475\textwidth]{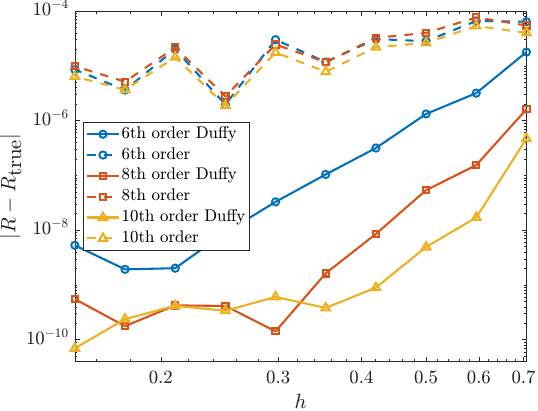}\label{fig:conva}}\qquad
    \subfigure[$R^e(\bx_0;\bx_0)$ for $\bx_0\in\partial\Omega$.]{\includegraphics[width=0.475\textwidth]{ell_conv_int.pdf}\label{fig:convb}}
   \caption{{\color{black}Estimated error in the interior (left) and exterior (right) regular parts of the Neumann function on a prolate spheroid with principal semi-axes $(a,b,c)=(1,1,\sqrt{2})$. The solid lines represent meshes with the singular Duffy patches near $\bx_0$ and the dashed ones indicate solutions generated without these special patches. \label{fig:convergence} }}
\end{figure}

}

\subsection{Applications to narrow capture problems}\label{sec:apps}

In this example, we demonstrate the ability of our method to inform on some open questions in narrow capture theory. For a bounded geometry $\Omega\subset\mathbb{R}^3$, the Narrow Capture Problem (NCP) 
\cite{SingerHolcman2006,TargetSearch2024,Gilbert_Cheviakov_2023} asks how \al{ the locations $\{\bx_j\}_{j=1}^N\in\Omega$ of $N$ small well separated traps should be arranged so as to minimize the average capture time $\tau$ of a diffusing molecule of diffusivity $D>0$?} For identical spherical traps of common radius $\varepsilon$, the quantity $\tau$ was shown to satisfy \cite{ChevWard2010} 
\begin{equation}\label{eqn:minP}
    \tau \sim \frac{|\Omega|}{D}\left[ \frac{1}{4\pi\varepsilon N} + \frac{p}{N^2}  \right]\, , \qquad p := p(\bx_1,\ldots,\bx_N) = \sum_{j=1}^N \Big[ R^i(\bx_j;\bx_j) + \sum_{ \substack{k=1 \\ j\neq k}}^N G^i(\bx_k; \bx_j)\Big],
\end{equation}
as $\varepsilon\to 0$. The optimizing trap locations $\{\bx_j\}_{j=1}^N$ therefore correspond to the minima of the discrete energy $p(\bx_1,\ldots,\bx_N)$. Minimization of similar discrete energies is a challenging computational problem on account of the multiplicity of critical points as $N$ increases. A number of optimization strategies combine local (e.g. Newton methods) and global search methods (e.g.~particle swarm optimization) \cite{TargetSearch2024,Gilbert_Cheviakov_2023,Sarafa2021}. In particular, it is essential to determine $p$ and its gradient to high precision so that the optimization algorithm can discern between configurations that have close values in the objective function. The recent survey article \cite{TargetSearch2024} highlighted the lack of reliable methods to compute the Neumann function as a major obstacle towards progress on this problem.

\begin{figure}[htbp]
    \centering
    \includegraphics[width=0.95\textwidth]{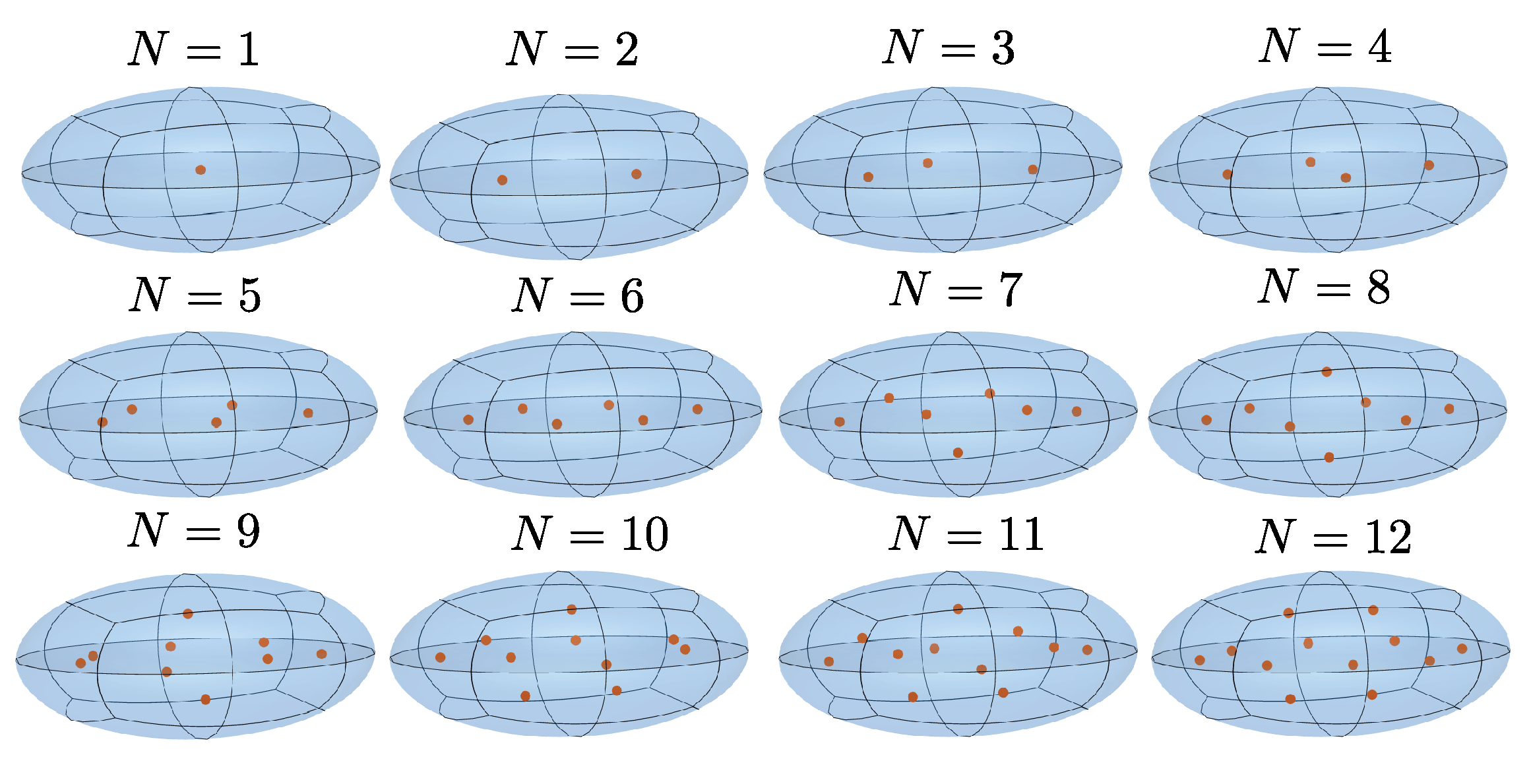}
    \caption{Locally optimal trap configurations for $N=1,\ldots,12$ in the ellipsoid \eqref{eqn:ellipsoidal}. For $1\leq N \leq 6$, the optimizing configurations are coplanar on $z=0$. \label{fig:EllipsoidOptimal}}
\end{figure}

In this section we demonstrate the effectiveness of our method at predicting optimizing configurations when coupled with established optimization routines. We present results on minimizers of \eqref{eqn:minP} obtained with the \textsc{Matlab} optimization routine {\tt fmincon} called with the {\tt interior-point} algorithm. A notable aspect of our boundary integral method is that it provides a high precision calculation of the gradient of the objective function without the need for differencing.

The first example is a demonstration for the ellipsoidal domain
\begin{equation}\label{eqn:ellipsoidal}
\frac{x^2}{a^2} + \frac{y^2}{b^2} + \frac{z^2}{c^2} = 1, \qquad (a,b,c) = \left(\frac32,1,\frac23\right),
\end{equation}
for $N=1,\ldots,12$ traps. We observe in Fig.~\ref{fig:EllipsoidOptimal} a geometric bifurcation where for $1\leq N \leq 6$, the minimizing configurations are coplanar on $z=0$ and not coplanar for $N\geq7$. A similar bifurcation has been observed in the two dimensional case of an ellipse where the minimizing set transitions from colinearity \cite{Sarafa2021,chakraborty2025fastintegralmethodsneumann,TargetSearch2024}.

\begin{figure}[htbp]
    \centering
    \includegraphics[width=0.95\textwidth]{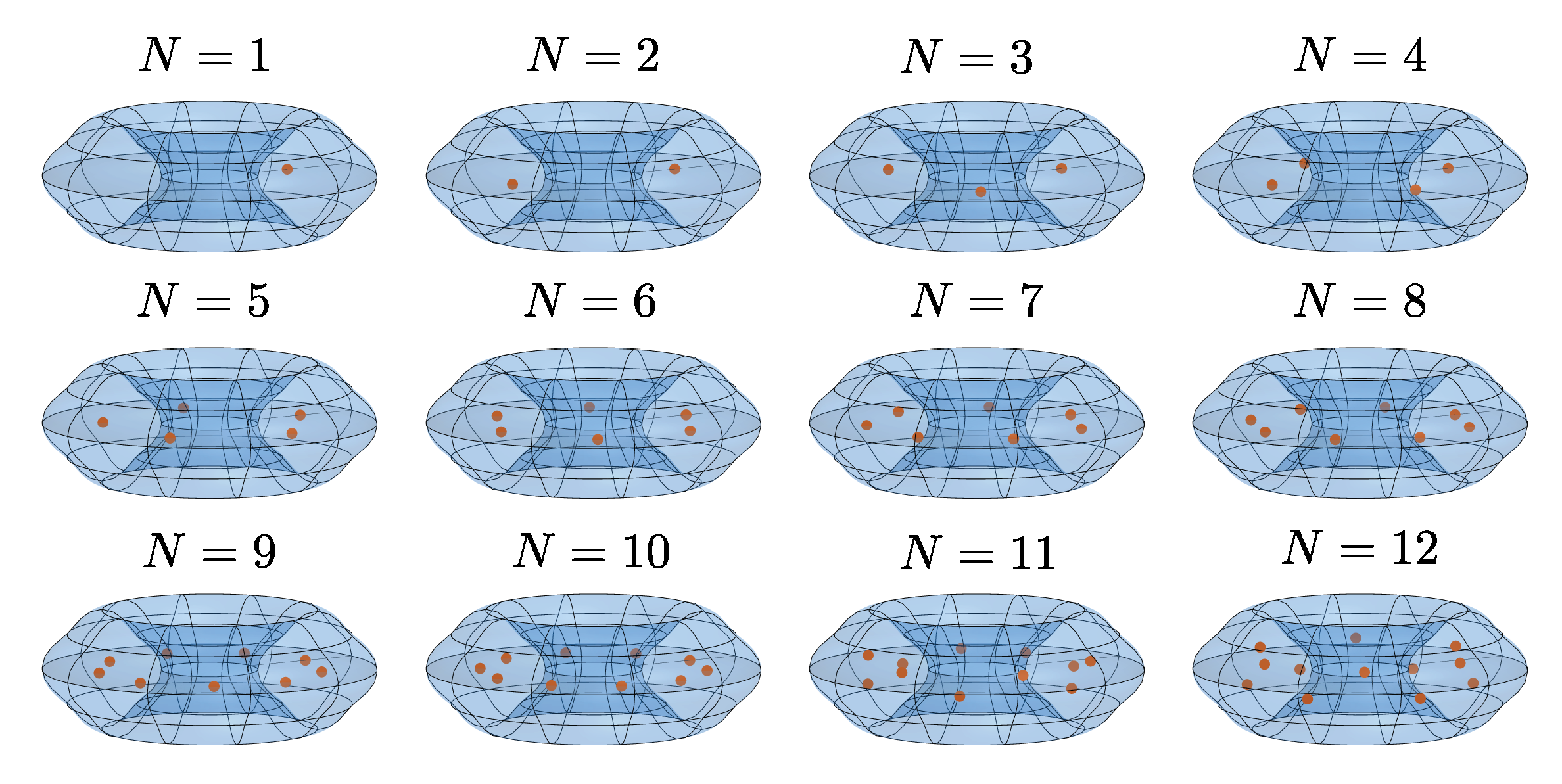}
    \caption{Minimizing locations of \eqref{eqn:minP} for $N=1,\ldots,12$ trap locations (red dots) in the torus defined in equation \eqref{eqn:torus_ex}. For $1\leq N\leq 10$, the optimizing configuration is coplanar with $z=0$. \label{fig:TorusOptimal}}
\end{figure}

In a second example, we compute some minimizing configurations for the toroidal domain specified by
\bsub\label{eqn:torus_ex}
\begin{align}
 \label{eqn:torus_ex_a}   x(\theta,\phi) &=  \, \big(r_{\text{maj}}+ \cos\theta \, (r_{\text{min}} + r_{\text{wave}}\cos n\theta )\big)\cos \phi,\\
  \label{eqn:torus_ex_b}  y(\theta,\phi) &=  \, \big(r_{\text{maj}}+ \cos\theta \, (r_{\text{min}} + r_{\text{wave}}\cos n\theta )\big) \sin \phi,\\
   \label{eqn:torus_ex_c} z(\theta,\phi) &=  \, \big(r_{\text{min}} + r_{\text{wave}} \cos n\theta \big) \sin \theta,
\end{align}
\esub
for $(\theta,\phi)\in(0,2\pi]^2$. We take the parameter values
\[
(r_{\text{maj}},r_{\text{min}},r_{\text{wave}},n) = (1,0.5,0.05,4),
\]
which represents a toroidal geometry with cross-sectional profile given by a near-circular disk. For the toroidal domain, we see in Fig.~\ref{fig:TorusOptimal} that for configurations $1\leq N\leq 10$, the optimizing configuration is coplanar with $z=0$ while for $N\geq 11$, the minimizing configuration undergoes a geometric bifurcation into the region $z\neq0$.

For both the ellipsoid and toroidal geometries, we show in Fig.~\ref{fig:ScalingP} \al{the calculated minimum energy $p$ as a function of $N$} for $N=1,\ldots,12$ trap configurations and again observe a function that decreases smoothly through this transition.

For a general domain $\Omega$, it would be desirable to derive an explicit formula for the minimizer of $p$ in the limit as $N\to\infty$ which can be verified with this method. This question relates to problems in Coulomb gases \cite{serfaty2024lecturescoulombrieszgases} and also appears in the concentration sets of nonlinear PDEs in  singularly perturbed limits \cite{ChenWard2011,Davila2012,delPino2005,WWK09,TXKTW2017}. Interestingly, despite the bifurcation from planar (red dots) to non-planar (yellow dots) trap configurations, the curve of minimizing energy appears to be smooth through this transition.

\begin{figure}[htbp]
    \centering
    \subfigure[Energy scaling for the ellipsoid domain.]{\includegraphics[width=0.45\textwidth]{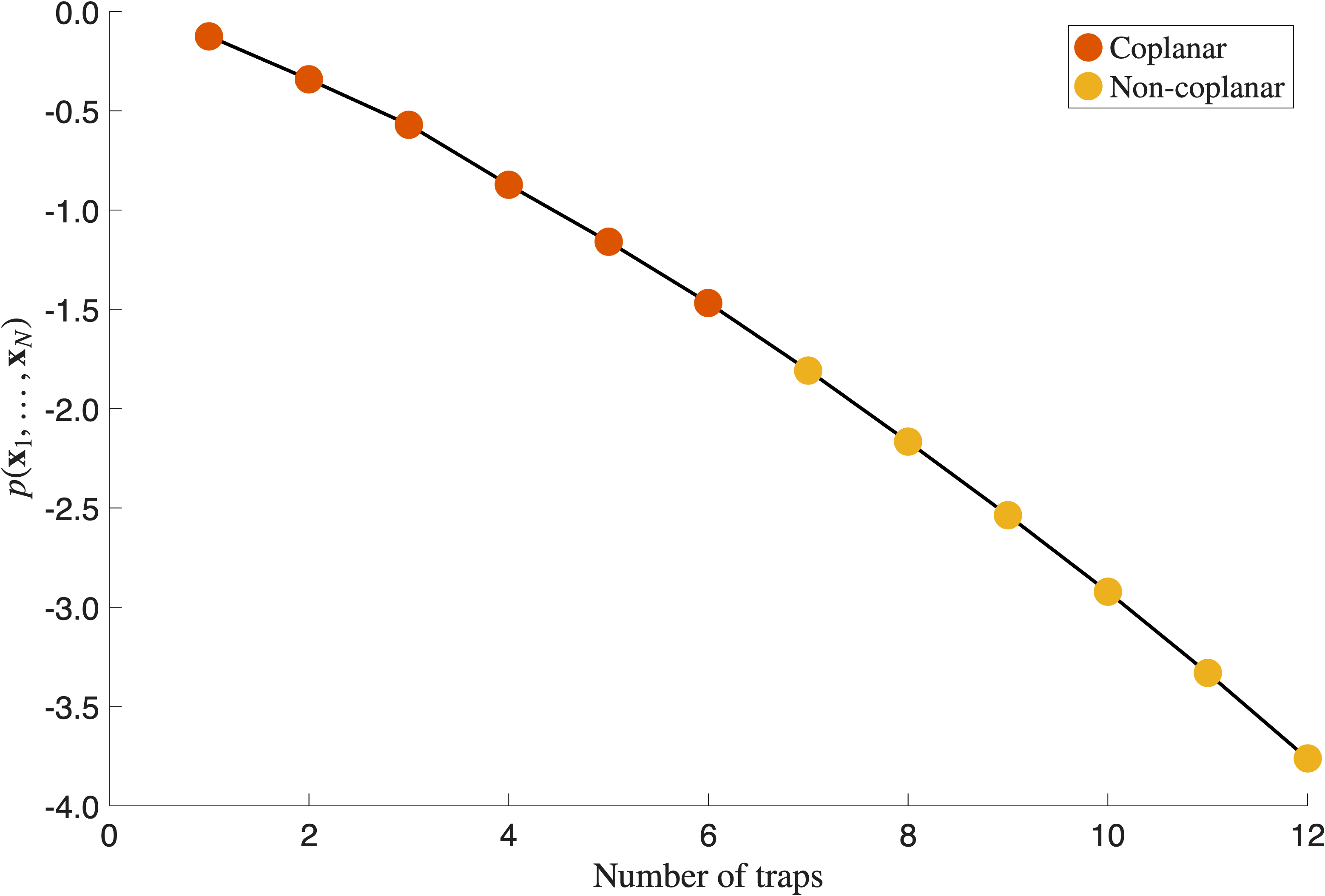} \label{fig:ScalingP_a}}
    \qquad
    \subfigure[Energy scaling for the toroidal domain.]{\includegraphics[width=0.45\textwidth]{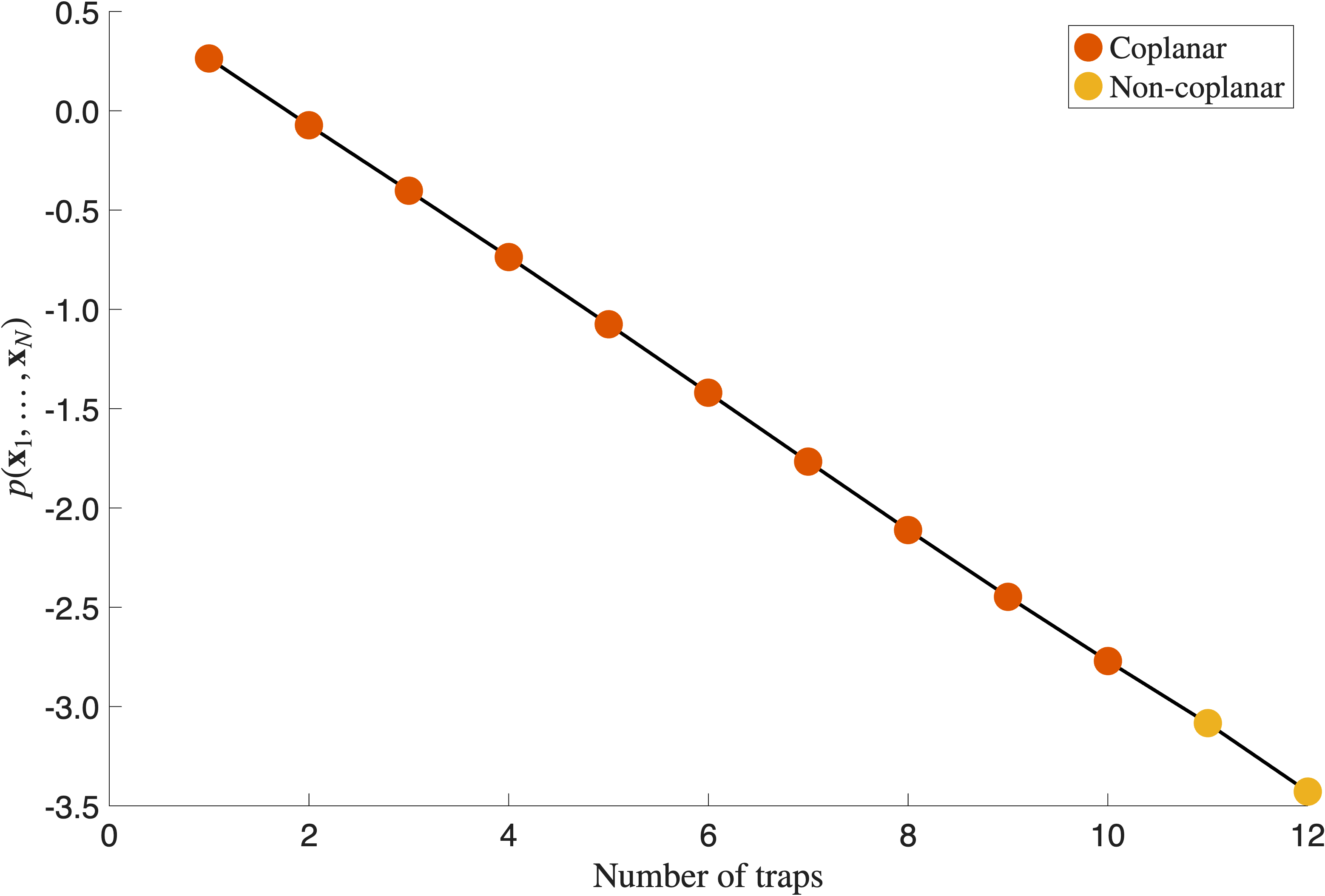} \label{fig:ScalingP_b}}
    \caption{Minimizing values of the discrete energy \eqref{eqn:minP} for $N= 1,\ldots,12$ for the ellipsoidal (left) domain \eqref{eqn:ellipsoidal} and the toroidal (right) domain \eqref{eqn:torus_ex}. The geometric bifurcation from planar (red dots) to non planar (yellow dots) optimizing configurations occurs at $N\geq7$ for the ellipsoid and $N\geq11$ for the torus. \label{fig:ScalingP}}
\end{figure}

\subsection{The regular part on some general geometries}\label{sec:reg_gen}

In this section, we present results of our method for the computation of the interior $R^i(\bx;\bx)$ and exterior $R^e(\bx;\bx)$ surface regular parts for $\bx\in\partial\Omega$ which are key to addressing the narrow escape problem. The study of \cite{NURSULTANOV2021202} determined that the quantity $R^i(\bx;\bx)$ describes how the location $\bx\in\partial\Omega$ of a boundary window modulates the expected exit time of diffusing particles from an enclosed geometry. In particular, lower values of $R^i(\bx;\bx)$ are associated with faster escape times. We remark that the study \cite{NURSULTANOV2021202} adopted the normalization condition $\int_{\partial \Omega} G^i(\bx;\by)\mathrm{d}A(\bx)=0$ which will modulate the value of $R^i(\bx;\bx)$ while retaining the singularity structure \eqref{eqn:Sing_G}. To demonstrate the potential of this method for the determination of optimizing locations of boundary windows, we calculate these quantities for three geometries. 

The biconcave disk, which approximates the geometry of blood-cells \cite{PozrikidisBloodCell}, is given by the surface 
\bsub\label{blood_cell}
\begin{equation}\label{blood_cell_a}
x(\theta,\phi) =  \alpha \sin \theta \cos \phi, \qquad y(\theta,\phi) =  \alpha \sin \theta \sin \phi \qquad z(\theta,\phi) = \frac{\alpha}{2} \left( b + c \sin^2 \theta - d \sin^4 \theta \right) \cos \theta,
\end{equation}
where $\phi\in[0,2\pi)$ and $\theta\in[0,\pi)$. The shape parameters are given by \cite{PozrikidisBloodCell}
\begin{equation}\label{blood_cell_b}
    \alpha = 1.38581994, \qquad b = 0.207, \qquad  c = 2.003, \qquad d = 1.123.
\end{equation}
\esub
In Fig.~\ref{fig:bloodcell_reg}, we plot solutions of the interior ($R^i(\bx;\bx)$) and exterior ($R^e(\bx;\bx)$) surface regular parts for the geometry \eqref{blood_cell}. In the case of the prolate spheroid with principal semi-axes $(a,b,c) = (1,1,\sqrt{2})$, we show in Fig.~\ref{fig:ellipsoid_reg} the numerical solution of $R^i(\bx;\bx)$ and $R^e(\bx;\bx)$ on the surface. Finally, in Fig.~\ref{fig:blob_reg} we plot the regular parts for a surface given by the unit sphere perturbed by a spherical harmonic, $0.3\cdot  \Re Y_6^4$. This surface has been used to model non-spherical perturbations of real cells \cite{Endres2025,Cavanagh2020}.

\begin{figure}[t!]
    \subfigure[$R^i(\bx;\bx)$ for $\bx\in\partial\Omega$.]{\includegraphics[width=0.475\textwidth]{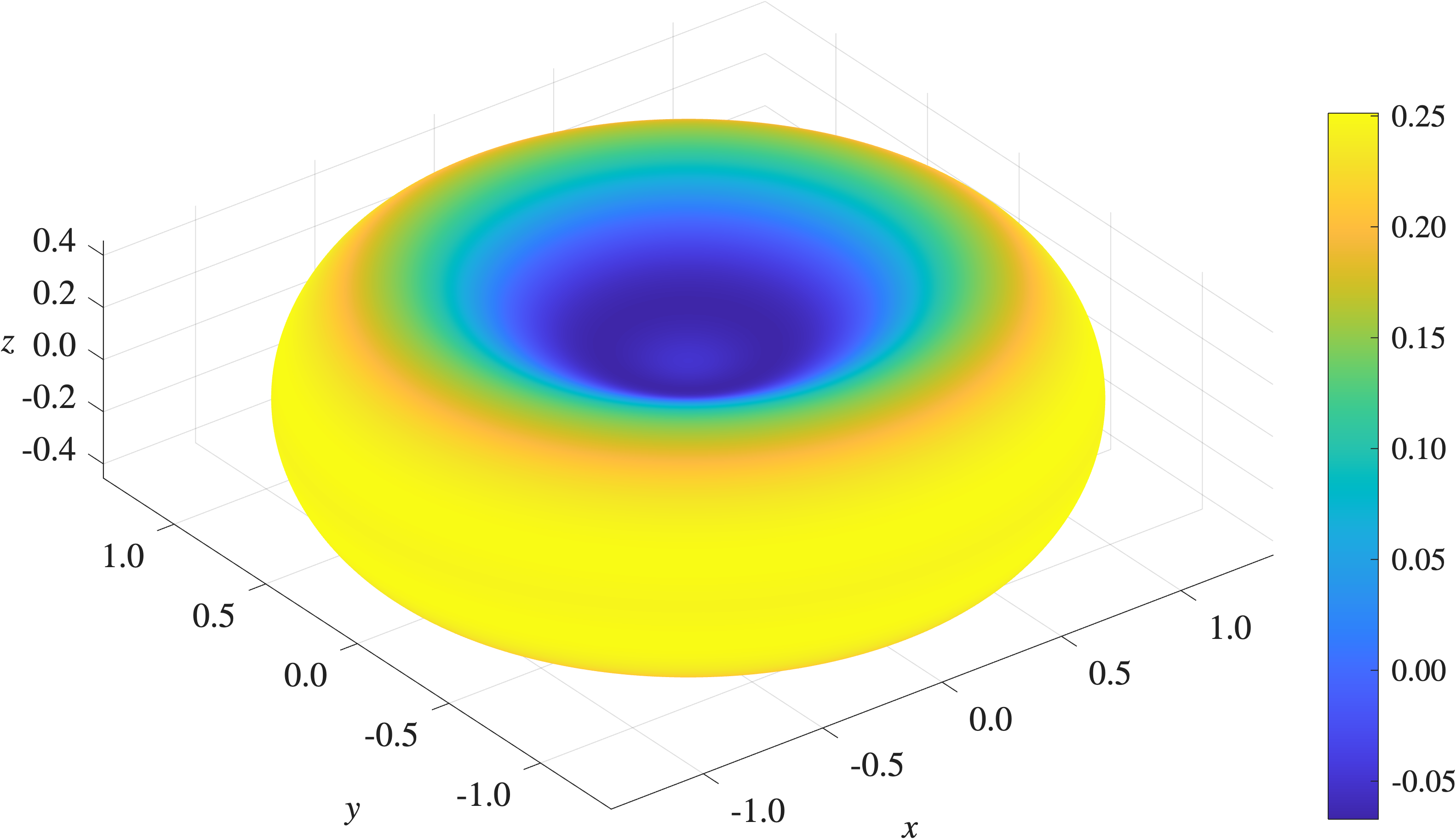}\label{fig:bloodcell_reg_a}}\qquad
    \subfigure[$R^e(\bx;\bx)$ for $\bx\in\partial\Omega$.]{\includegraphics[width=0.475\textwidth]{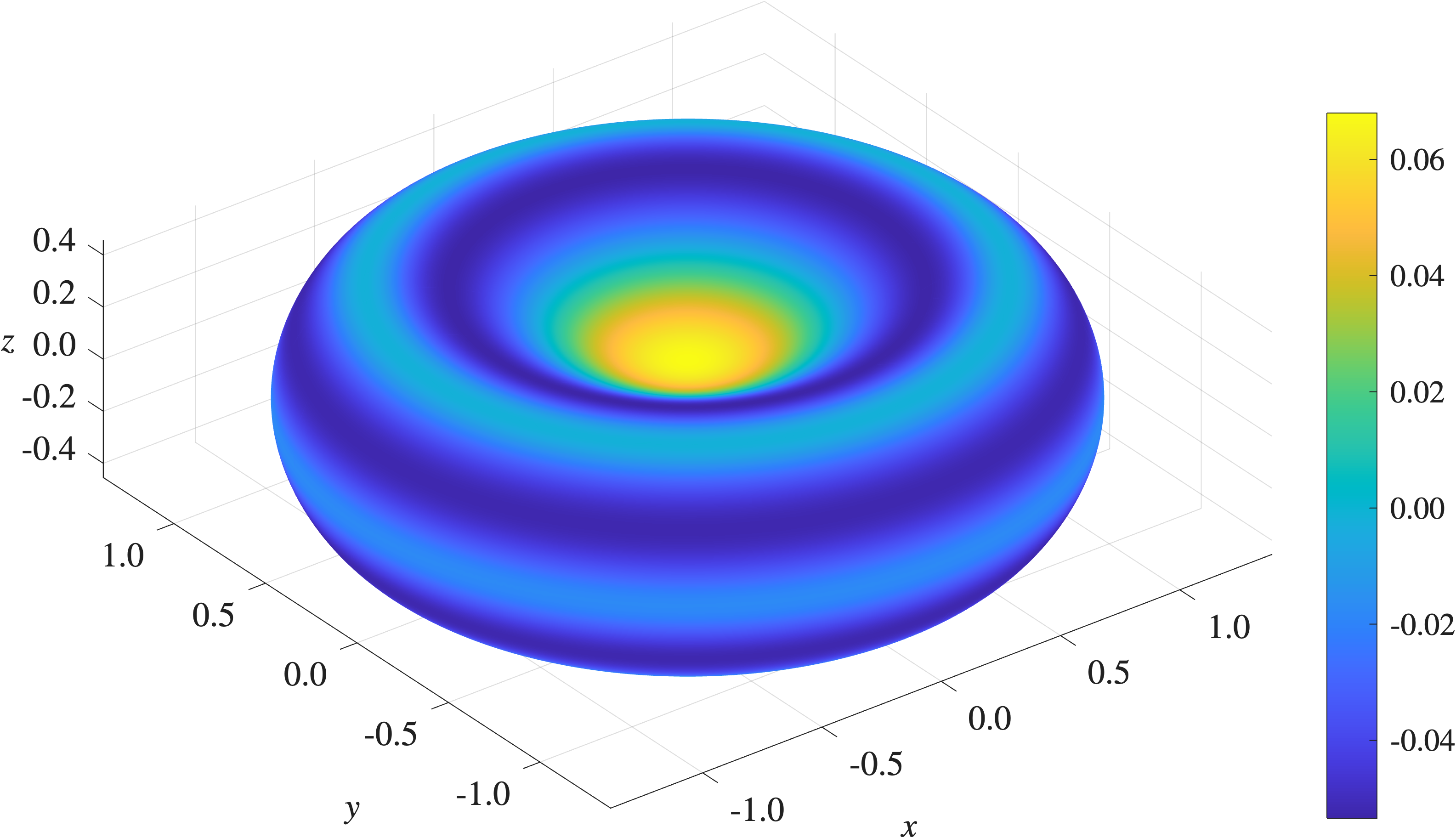} \label{fig:bloodcell_reg_b}}
   \caption{Numerically computed values of the interior (left) and exterior (right) regular parts for the blood cell shape domain \eqref{blood_cell}.
    \label{fig:bloodcell_reg}}
\end{figure}

\begin{figure}[htbp]
   \subfigure[$R^i(\bx;\bx)$ for $\bx\in\partial\Omega$.]{\includegraphics[width=0.475\textwidth]{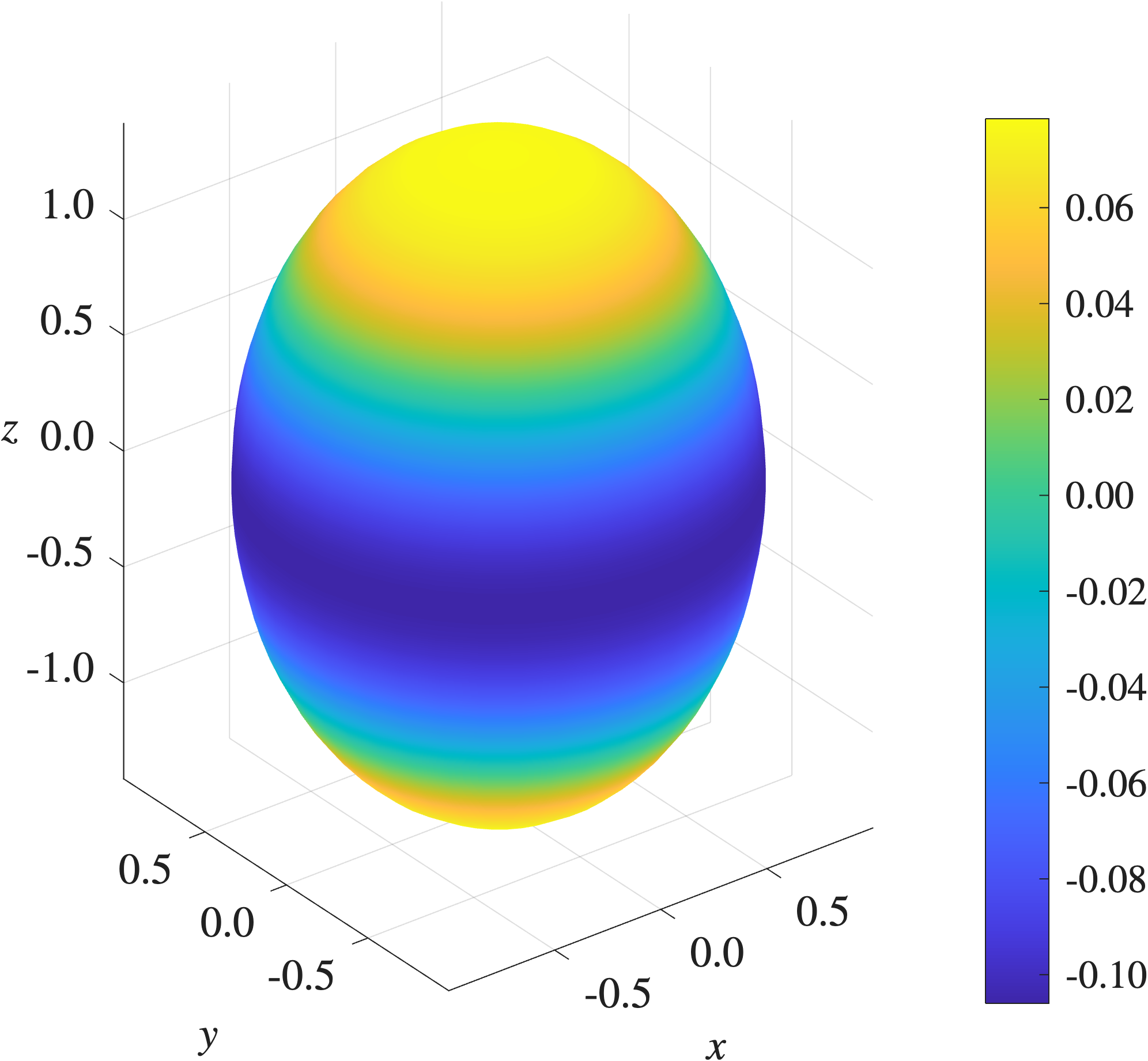}\label{fig:ellipsoid_reg_a}}\qquad
    \subfigure[$R^e(\bx;\bx)$ for $\bx\in\partial\Omega$.]{\includegraphics[width=0.475\textwidth]{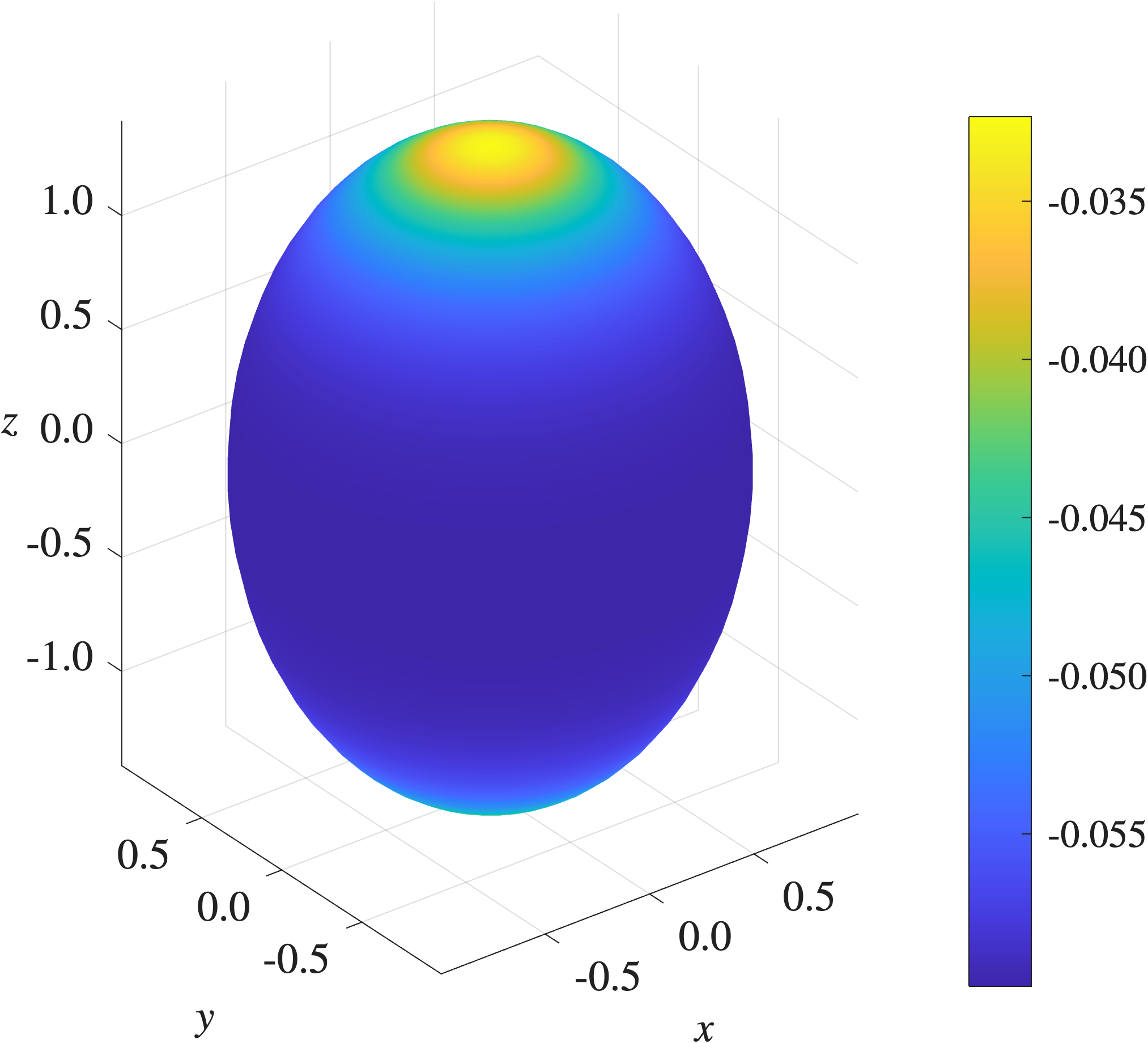}\label{fig:ellipsoid_reg_b}}
   \caption{Numerically computed values of the interior (left) and exterior (right) regular parts of the Neumann function on a prolate spheroid with principal semi-axes $(a,b,c)=(1,1,\sqrt{2})$. \label{fig:ellipsoid_reg} }
\end{figure}

In the case of the blood cell shaped domain, we see in Fig.~\ref{fig:bloodcell_reg_a} that the lowest value of $R^i(\bx;\bx)$ is attained at the dimple where the surface curvature is at a critical point. A similar optimizing preference was observed in \cite{chakraborty2025fastintegralmethodsneumann} for the planar case. In Fig.~\ref{fig:ellipsoid_reg_a} we see that the lowest value of $R^i(\bx;\bx)$ is attained on the equator. We remark that at the poles and the equator, the mean curvatures are
\[
\Hc_{\text{pole}} = -\frac{c}{a^2} = -\sqrt{2}, \qquad \Hc_{\text{eq}} = -\frac{c^2+ a^2}{2ac^2} = -\frac{3}{4},\]
so that $\Hc_{\text{eq}}>\Hc_{\text{pole}}$. We observe a simple correspondence between the maximum and minimum values of $R^i(\bx;\bx)$ and $\Hc(\bx)$ for $\bx\in\partial\Omega$ in the case of a prolate spheroid. Specifically, the value of $R^i(\bx;\bx)$ is minimized at the equator of the prolate spheroid - a point at which the curvature is maximized. For arbitrary geometries, we expect that $R^{i}(\bx;\bx)$ depends on both local and global geometric features in a non-trivial way. It remains an open problem to describe the minimizing set of boundary windows for a general geometry, however, the numerical tool developed here can provide highly accurate estimates of the key quantities involved in this optimization problem.

\begin{figure}[htbp]
    \centering
    \subfigure[$R^i(\bx;\bx)$ for $\bx\in\partial\Omega$.]{\includegraphics[width=0.475\textwidth]{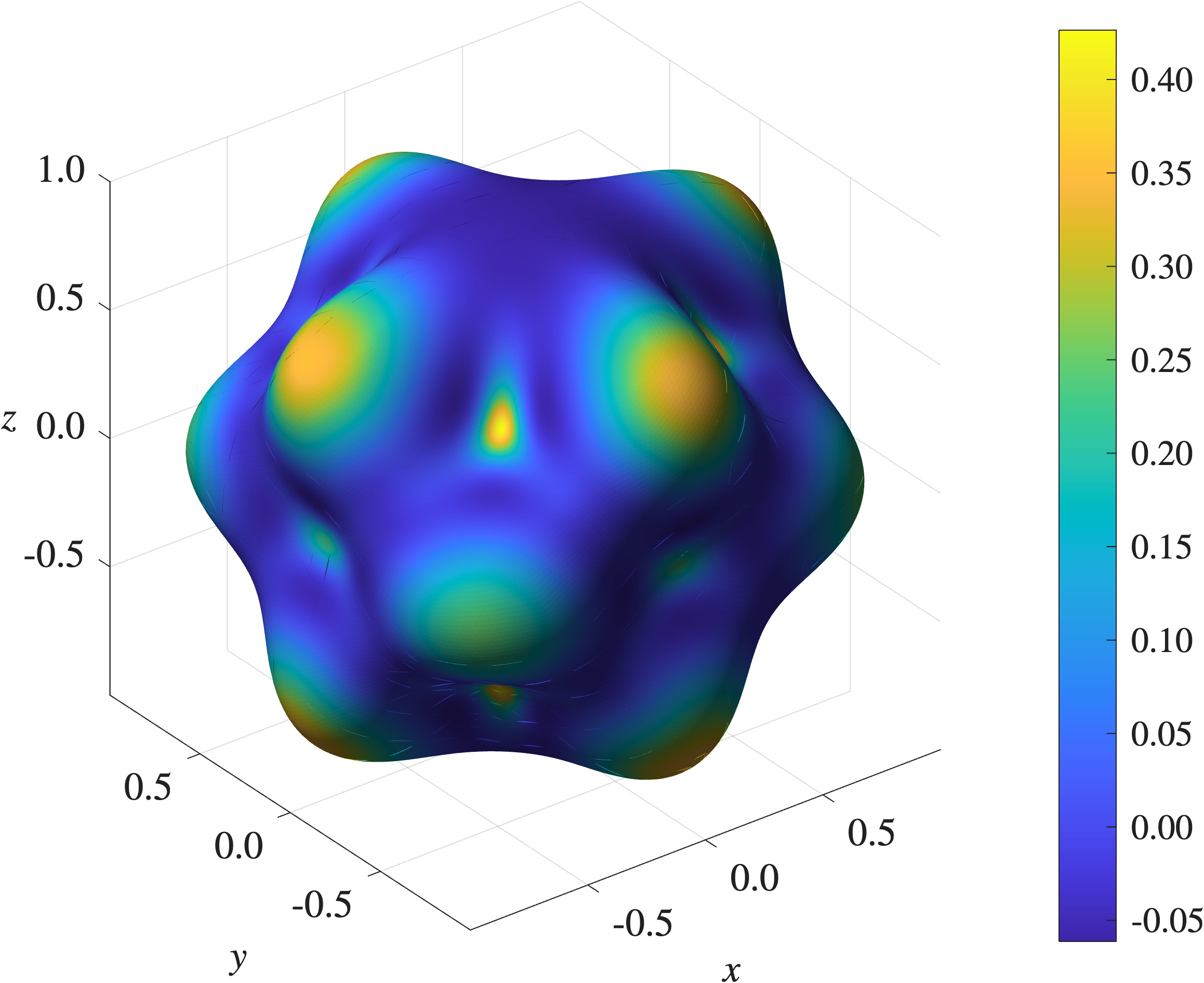}\label{fig:blob_reg_a}}\qquad
    \subfigure[$R^e(\bx;\bx)$ for $\bx\in\partial\Omega$.]{\includegraphics[width=0.475\textwidth]{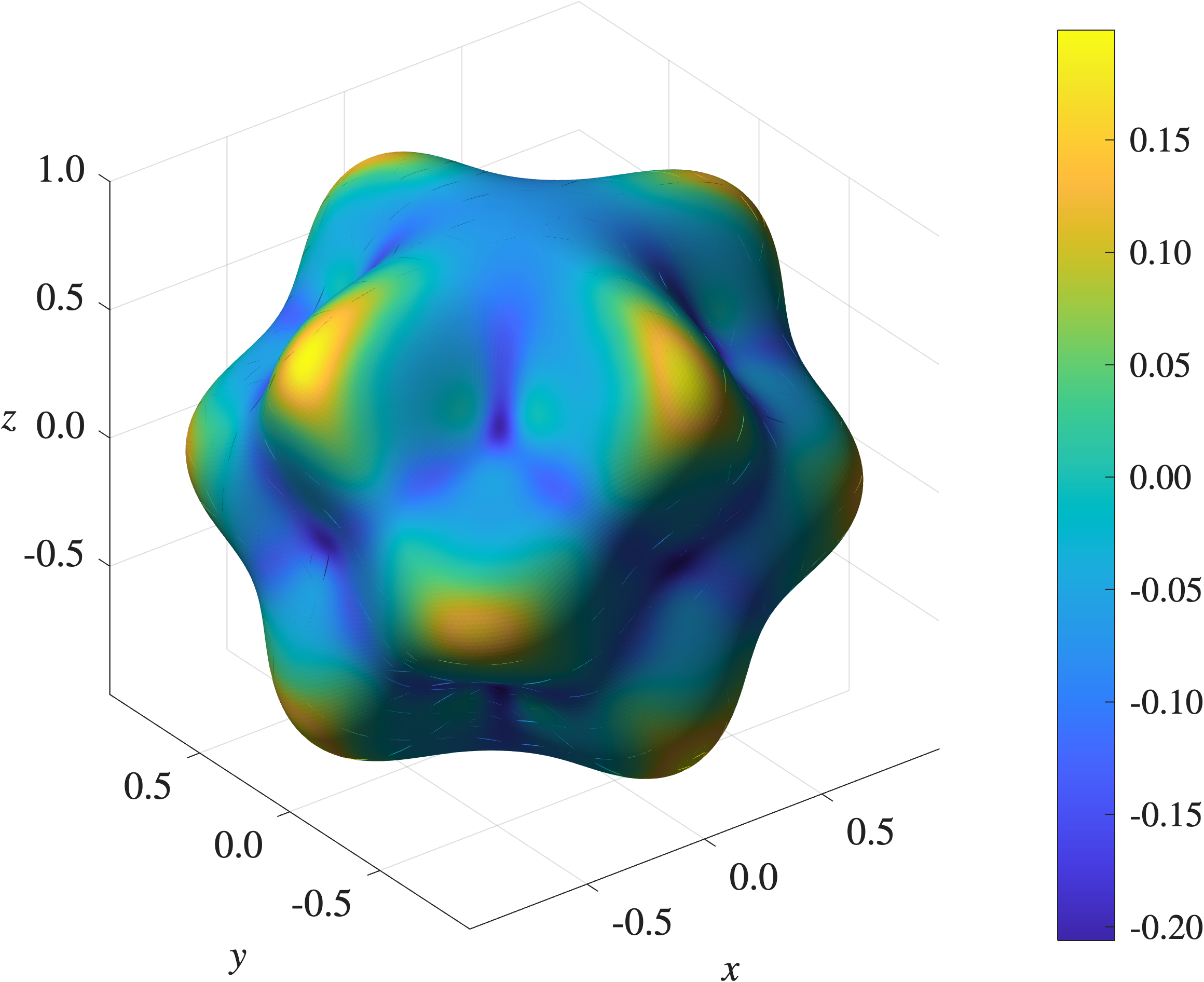}\label{fig:blob_reg_b}}
   \caption{Numerically computed values of the interior (left) and exterior (right) regular parts of the Neumann function on a sphere that has been perturbed by $Y_6^4$. \label{fig:blob_reg} }
\end{figure}

\section{Discussion}\label{sec:Discussion}

In this paper we have considered the three-dimensional Neumann Green's function and provided numerical methodologies to evaluate this quantity for general geometries. In the particular case of a surface source located on a curved geometry, we derived a three term expansion for the singularity structure that depends on local geometric features through the sum and difference of the principal curvatures. With this explicit information, we implemented a high order boundary integral method to recover the remaining regular part of the Green's function. For problems posed either interior or exterior to $\Omega$, with a bulk or surface source, we provide a high-order numerical routine to evaluate the Green's function, its regular part and their gradients. The method was validated on a number of test cases including the sphere, prolate spheroid and a family of constructed geometries. 

As an application of our method to an open problem in diffusion theory, we computed optimizing trap configurations for the narrow capture problem in ellipsoid and toroidal domains. We observed a geometric bifurcation where the optimal placement transitions away from a co-planar structure as $N$ increases. We anticipate that this numerical method will be a useful tool in studying related problems in capture theory, particularly in the open problem of determining the optimal configurations of boundary traps and identifying the limiting configurations as $N\to\infty$.

Singular perturbation methods are a powerful tool for studying problems with strongly localized phenomena, such as spot dynamics in reaction-diffusion systems or narrow escape in diffusion theory \cite{TXKTW2017,NURSULTANOV2021202,Bressloff2024}. In these settings the localized dynamics are globally coupled through the appropriate Neumann Green's function, and for sites localized on the boundary the influence of surface geometry is encoded in the diagonal regular part $R^{i,e}(\bx;\bx)$ for $\bx\in\partial\Omega$. In Section~\ref{sec:reg_gen} we computed this quantity across several representative geometries, demonstrating that the method resolves the geometric dependence relevant to surface-mediated reaction dynamics. The high-order approaches developed here can therefore broaden the range of geometries in which singular perturbation techniques are tractable.

An important avenue for extension of this work is to other operators such as the modified Helmholtz operator which arises when the Laplace transform is applied to the heat equation. This approach is one route \cite{Lindsay2023,BL2025,CL2022,CHERRY2025,LindsayTzou2016,lacroix2025lightningmethodheatequation} to reproduce the time-dependent statistics \cite{Grebenkov_2019} of the dynamics arrival rate at reactive sites. These quantities are essential to determine extreme statistics for first passage time problems \cite{Linn_2022,Morgan2023}.

\section*{Conflict of interest statement} The authors declare no conflicts of interest.

\section*{Acknowledgments}
AEL acknowledges support from the NSF under grant DMS-2052636. JGH was supported in part by a Sloan Research Fellowship. TG was supported by an AMS-Simons Travel Grant. This research was supported in part by grants from the NSF (DMS-2235451) and Simons Foundation (MPS-NITMB-00005320) to the NSF-Simons National Institute for Theory and Mathematics in Biology (NITMB).

\bibliographystyle{siamplain}
\bibliography{numerical_ref}

\end{document}